\newtheorem{theorem}{Theorem}
\newtheorem{proposition}[theorem]{Proposition}
\newtheorem{lemma}[theorem]{Lemma}
\newtheorem{remark}[theorem]{Remark}
\newtheorem{corollary}[theorem]{Corollary}
\numberwithin{equation}{section}
\DeclareMathOperator{\sech}{sech}
\DeclareMathOperator{\arctanh}{arctanh}
\DeclarePairedDelimiter{\norm}{\lVert}{\rVert}
\newcommand{\psld}[2]{\left( #1,#2 \right)_{2}}
\newcommand{\dual}[2]{\left\langle #1,#2 \right\rangle}
\newcommand{\eps}{\varepsilon}
\newcommand{\N}{\mathbb{N}}
\newcommand{\R}{\mathbb{R}}
\newcommand{\C}{\mathbb C}
\DeclareMathOperator{\ModOp}{Mod_{op}}
\DeclareMathOperator{\Mod}{Mod}
\renewcommand{\Re}{\mathrm{Re}}
\renewcommand{\Im}{\mathrm{Im}}
\newcommand{\bee}{\begin{eqnarray*}}
\newcommand{\eee}{\end{eqnarray*}}
\numberwithin{equation}{section}
\numberwithin{theorem}{section}
\begin{document}
	\onehalfspacing

\title[Minimal mass blow-up solutions for NLS with delta potential]{Minimal mass blow-up solutions for the   $L^2$-critical NLS with the Delta potential for radial data in one dimension}


\author[]{Xingdong Tang}
\address{\hskip-1.15em Xingdong Tang 
	\hfill\newline School of Mathematics and Statistics, \hfill\newline Nanjing Univeristy of Information Science and Technology, 
	\hfill\newline Nanjing, 210044,  People's Republic of China.}
\email{txd@nuist.edu.cn}

\author[]{Guixiang Xu}
\address{\hskip-1.15em Guixiang Xu
	\hfill\newline Laboratory of Mathematics and Complex Systems,
	\hfill\newline Ministry of Education,
	\hfill\newline School of Mathematical Sciences,
	\hfill\newline Beijing Normal University,
	\hfill\newline Beijing, 100875, People's Republic of China.}
\email{guixiang@bnu.edu.cn}

\subjclass[2010]{35Q55; 35B44}

\keywords{blow-up, concentration-compactness argument, compactness method,  Dirac delta potential, Energy-Morawetz estimate, modulation analysis, nonlinear Schr\"odinger equation}

\begin{abstract}
We consider the $L^2$-critical nonlinear Schr\"odinger equation (NLS) with the delta potential
 $$i\partial_tu +\partial^2_x u + \mu \delta u +|u|^{4}u=0, \, \, t\in \R, \, x\in \R , $$
 where $ \mu \in \R$, and $\delta$ is the Dirac delta distribution at $x=0$.   Local well-posedness theory together with sharp Gagliardo-Nirenberg inequality and the conservation laws of mass and energy implies that the solution with mass less than $\|Q\|_{2}$ is global existence in $H^1(\R)$, where $Q$ is the ground state of the $L^2$-critical NLS without the delta potential (i.e. $\mu=0$).  
 
 We are interested in the dynamics of the solution with threshold mass $\|u_0\|_{2}=\|Q\|_{2}$ in $H^1(\R)$. First, for the case $\mu=0$, such blow-up solution exists due to the pseudo-conformal symmetry of the equation,  and is unique up to the symmetries of the equation  in $H^1(\R)$ from \cite{Me93:NLS:mini sol} (see also \cite{HmKe05:NLS:mini blp}), and recently in $L^2(\R)$ from  \cite{Dod:NLS:L2thrh1}. Second, for the case $\mu<0$, simple variational argument with the  conservation laws of mass and energy implies that radial solutions with threshold mass exist globally in $H^1(\R)$. Last, for the case $\mu>0$, we show the existence of radial threshold solutions with blow-up speed determined by the sign (i.e. $\mu>0$) of the delta potential perturbation since the refined blow-up profile to the rescaled equation is stable in a precise sense.  The key ingredients here including the Energy-Morawetz argument and compactness method as well as the modulation analysis are close to the original one in \cite{RaS11:NLS:mini sol}  (see also \cite{KrLR13:HalfW:nondis, LeMR:CNLS:blp, Mart05:Kdv:N sol, MaP17:BO:mini sol, MeRS14:NLS:blp}). 
\end{abstract}

\maketitle
 


\section{Introduction}

In this paper, we consider the  $L^2$-critical nonlinear Schr{\"o}dinger equation with the delta potential
\begin{equation}\label{eq:dnls}
\begin{cases}
i \partial_t u +\partial^2_x u +\mu \delta u+|u|^4u=0, & (t,x)\in\R \times\R, \\
u(0,x)=u_0(x)\in H^1(\R), &
\end{cases}
\end{equation}
where $u$ is a complex-valued function of $(t,x)$,
$\mu \in \R $, $\delta$ is the Dirac delta distribution at the origin.
For $\mu=0$, it is focusing, $L^2$-critical nonlinear Schr\"odinger equation (NLS) in one dimension, here we call it the $L^2$-critical NLS  since the scaling transformation 
$$u_{\lambda}(t,x)=\lambda^{\frac 12}u(\lambda^2 t, \lambda x)$$ 
leaves the $L^2$ norm invariant
\begin{equation*}
\|u_{\lambda}(t,\cdot)\|_{2}=\|u(\lambda^2t, \cdot)\|_{2}.
\end{equation*} 
A series of studies dealt with the $L^2$-critical NLS, we can refer to \cite{Ca03:book, Dod:NLS:sct, Dod19:book, Dod:NLS:L2thrh1, Fi:NLS:book, Me93:NLS:mini sol, SuS99:NLS:book}, and references therein. For $\mu \not = 0$, it appears in various physical models with a point defect on the line, for instance, quantum mechanics  \cite{AlGKH88:Phys:book}, nonlinear optics \cite{GoHW04:dNLS:PhysD, HoZZ07:NLS:scat, HoZ09:dNLS:breat} and references therein. We can refer to \cite{AdN:JPA, AlGKH88:Phys:book} for more details on the $\delta$-perturbation of strength $\mu$ of the self-adjoint operator $\partial^2_x$. The appearance of the delta potential destroys spatial translation, scaling transformation and pseudo-conformal transformation invariances of \eqref{eq:dnls}. For the case $\mu<0$, it corresponds to the repulsive delta potential,  while for the case  $\mu>0$, it corresponds to the attractive delta  potential. 

 Local well-posedness result for \eqref{eq:dnls} is well understood in $H^1(\R)$ by many authors, for example, by Cazenave in \cite[Theorem 3.7.1]{Ca03:book},  Fukuizumi, Ohta and Ozawa in \cite{FuOZ08:DNLS:sta} and Masaki, Murphy and Segata in \cite{MaMS18:dNLS:sct}. More precisely, we have
\begin{proposition}\label{prop:LWP}
	For any $u_0\in H^1(\R)$, there exists a unique maximal lifespan solution
	$u\in \mathcal{C} \left( \left(-T_{*}, T^{*}\right), H^1(\R) \right) \cap \mathcal{C}^1\left( \left(-T_{*}, T^{*}\right), H^{-1}(\R) \right)$ of \eqref{eq:dnls} 
	satisfying the the following blow-up criterion,
	\begin{equation}\label{lwp:blp crit}
    T^{*}<+\infty \text{\;implies that\;}  \displaystyle\lim\limits_{t\nearrow T^{*}}\norm{\partial_x u(t) }_{2}=+\infty.
	\end{equation}
	Moreover, the mass and the energy are conserved
	under the flow \eqref{eq:dnls}, i.e., for any $t\in \left(-T_{*}, T^{*}\right)$, we have
	\begin{gather}
	\label{con:mass}
	M(u(t)):=\frac{1}{2}\int_{\R}|u(t,x)|^2 d x = M(u_0),
	\\
	\label{con:energy}
	E(u(t))
	: =
	\int_{\R}\left[ \frac{1}{2}| \partial_x u (t,x)|^2-\frac{1}{2}\mu \  \delta(x)|u(t,x)|^2-\frac{1}{6} |u(t,x)|^{6}   \right] dx= E(u_0).
	\end{gather}
	
\end{proposition}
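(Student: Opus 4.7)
The first step is to make precise sense of $H_\mu := -\partial_x^2 - \mu\delta$ as a self-adjoint operator on $L^2(\mathbb{R})$. I would do this via the closed quadratic form $Q_\mu(u) = \|\partial_x u\|_2^2 - \mu|u(0)|^2$ on $H^1(\mathbb{R})$; the pointwise term $|u(0)|^2$ is well defined and infinitesimally small relative to $\|\partial_x u\|_2^2$ by the 1D Sobolev embedding $H^1(\mathbb{R}) \hookrightarrow C(\mathbb{R}) \cap L^\infty(\mathbb{R})$, so by KLMN the form closes with form domain $H^1$ and its operator domain is $\{u \in H^1(\mathbb{R}) : u|_{\mathbb{R}\setminus\{0\}} \in H^2,\ \partial_x u(0^+) - \partial_x u(0^-) = -\mu u(0)\}$. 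The group $e^{-itH_\mu}$ is then unitary on $L^2$ and preserves $H^1$. I would invoke the Strichartz estimates for $e^{-itH_\mu}$, which can be obtained either from the explicit 1D kernel or by a short-range perturbation argument, with exactly the same admissible exponents as in the free case.

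\textbf{Fixed-point construction and blow-up criterion.} With Strichartz in hand, I would run a standard Kato-type contraction on the Duhamel map
$$\Phi(u)(t) = e^{-itH_\mu} u_0 + i\int_0^t e^{-i(t-s)H_\mu} \bigl(|u|^4 u\bigr)(s)\,ds,$$
working in $X_T = C([-T,T]; H^1(\mathbb{R})) \cap L^8_t W^{1,4}_x$. The nonlinear estimate $\bigl\||u|^4 u\bigr\|_{L^1_t H^1_x} \lesssim T\,\|u\|_{L^\infty_t H^1_x}^5$, which exploits the 1D embedding $H^1 \hookrightarrow L^\infty$, gives a contraction on a ball of $X_T$ for $T$ depending only on $\|u_0\|_{H^1}$. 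Iterating produces a maximal lifespan $(-T_*, T^*)$, and combined with mass conservation \eqref{con:mass}, any failure to extend past $T^*$ reduces to $\|\partial_x u(t)\|_2 \to \infty$ as $t \nearrow T^*$, giving \eqref{lwp:blp crit}.

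\textbf{Conservation laws and expected main obstacle.} For data $u_0 \in D(H_\mu)$ the solution belongs to $C(I; D(H_\mu)) \cap C^1(I; L^2)$ and I would differentiate directly to get $\tfrac{d}{dt} M(u) = \Re\langle u, \partial_t u\rangle = 0$ and $\tfrac{d}{dt} E(u) = \Re\langle H_\mu u - |u|^4 u, \partial_t u\rangle = 0$, using self-adjointness of $H_\mu$ together with the equation itself. Conservation for general $H^1$ data then follows by approximating $u_0$ in $H^1$ and using continuous dependence of the flow in the $X_T$ norm combined with continuity on $H^1$ of the functionals $u \mapsto |u(0)|^2$ and $u \mapsto \|u\|_6^6$. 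The one genuinely delicate point, and where the cited references do the real work, is the linear theory for the singular perturbation $\mu\delta$: once the form domain is identified as $H^1$ and Strichartz estimates for $e^{-itH_\mu}$ are established with free-type exponents, the whole nonlinear scheme is entirely parallel to the classical $L^2$-critical NLS in one dimension, which is why the proposition can be asserted with only a pointer to \cite{Ca03:book, FuOZ08:DNLS:sta, MaMS18:dNLS:sct}.
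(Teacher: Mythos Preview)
Your proposal is correct and matches the standard approach; note that the paper itself does not give a proof of this proposition at all, but simply records it as known and points to \cite[Theorem 3.7.1]{Ca03:book}, \cite{FuOZ08:DNLS:sta}, and \cite{MaMS18:dNLS:sct}. Your sketch --- KLMN to realize $-\partial_x^2-\mu\delta$ as a self-adjoint operator with form domain $H^1$, Strichartz estimates for $e^{-itH_\mu}$, a Kato contraction in $C_tH^1_x$ using the embedding $H^1(\R)\hookrightarrow L^\infty(\R)$, and a density argument for the conservation laws --- is exactly how those references proceed, so there is nothing to compare against within the paper.
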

 
\subsection{The $L^2$-critical case $\mu=0$.} Let us firstly recall the well-known results of \eqref{eq:dnls} for the case $\mu=0$. 
 From the variational argument in \cite{We83:NLS:SGN}, the positive, radial ground state solution to 
\begin{equation}\label{eq:grd sol1}
-\partial^2_x Q+Q-|Q|^{4}Q=0, \ \ Q\in H^1(\R), \ \ Q>0, \ \ \mbox{$Q$ radial}
\end{equation}
is the extremizer of sharp Gagliardo-Nirenberg inequality 
\begin{equation}
\label{est:SGN}
\|u\|^{6}_{6}\leq C_{\rm GN}\|u\|_{2}^{4}\|\partial_x u\|_{2}^2,
\end{equation}
where $C_{\rm GN}= 3/\norm{Q}^4_2$. Moreover, the ground state $Q$ is unique up to symmetries of \eqref{eq:grd sol1} from \cite{BL83:ell PDE, Kw89:EPDE:uniq}.  Therefore, for any $u\in H^1(\R)$, we have 
\begin{equation}
\label{est:GNenergy}
 E(u)=E_{\rm crit}(u):=\frac12\norm{\partial_x u}_2^2-\frac{1}{6}\norm{u}_{6}^{6}\geq \frac 12\|\partial_x u\|_{2}^2\left(1-\frac{\|u\|^4_{2}}{\|Q\|^4_{2}}\right),
\end{equation}
which together with  \eqref{con:mass}, \eqref{con:energy} and the blow-up criterion \eqref{lwp:blp crit}  implies that the solutions of \eqref{eq:dnls} with mass less than $\|Q\|_{2}$ globally exist in $H^1(\R)$. Furthermore, it scatters to the linear solutions $e^{it\partial^2_x}u^{\pm}_0$ for some $u^{\pm}_0\in L^2(\R)$ in both time directions in $L^2(\R)$, we can refer to \cite{Dod:NLS:sct} for more details.

For the critical mass case $\|u_0\|_{2}=\|Q\|_{2}$. By applying the pseudo-conformal transformation
\begin{equation}
\label{sym:psdconf}
u(t,x) \longmapsto v(t,x)=\frac{1}{|t|^{\frac 12}}\bar{u} \left(\frac 1t,\frac{x}{t}\right)e^{i\frac{|x|^2}{4t}}
\end{equation} to the solitary solution $u(t,x)=Q(x)e^{it}$, we can obtain the explicit blow-up solution with threshold mass
\begin{equation}
\label{sol:psdconf blp}
S(t,x) = \frac{1}{|t|^{\frac 12}}Q\left(\frac {x}{|t|} \right)e^{-i\frac{|x|^2}{4|t|}}e^{\frac {i}{|t|}},\quad
\|S(t)\|_2= \|Q\|_{2}, \quad\text{and}\quad \|\partial_x S(t)\|_{2}\approx \frac 1{|t|} \,\,\text{as}\,\, t\to 0^-.
\end{equation}
From \cite{Me93:NLS:mini sol}, finite time blow-up solutions with threshold mass $\norm{Q}_2$ are completely classified by F.~Merle in $H^1(\R)$  in the following sense $$\|u(t,\cdot)\|_{2}=\|Q\|_{2}\ \ \mbox{and}\ \ T^*<+\infty\ \ \mbox{imply}\ \ u\equiv S$$ up to the symmetries of \eqref{eq:dnls} (see also \cite{HmKe05:NLS:mini blp} for simplified proof).  Recently B.~Dodson pushes this rigidity result forward in \cite{Dod:NLS:L2thrh1, Dod:NLS:L2thrh2}, and obtains the classification result of finite time blow-up solutions of the focusing,  $L^2$-critical NLS with threshold mass in $L^2(\R^d)$, $1\leq d \leq 15$.

For the super-critical mass case $\|u_0\|_{2}>\|Q\|_{2}$, there are lots of important blow-up results of \eqref{eq:dnls} in $H^1(\R)$, we can refer to  \cite{BoW97:NLS:blp, FanMen:NLS:blp, MeR03:NLS:blp, MeR04:NLS:blp, MeR05:NLS:blp, MeR06:NLS:blp, MeRS13:NLS:instab, PlR07:NLS:blp, Ra05:NLS:blp, Ra06:NLS:blp}, 
and references therein.

\subsection{\texorpdfstring{The repulsive potential case $\mu <0$}{The case mu<0}} 
Now we consider the repulsive potential case $\mu<0$. On the one hand, there are no solitary waves of \eqref{eq:dnls} with the subcritical mass $\|u_0\|_{2}<\|Q\|_{2}$ from classical variational argument in \cite{FuOZ08:DNLS:sta, LeFFKS08:dNLS:instab}. On the other hand, by \eqref{con:mass}, \eqref{con:energy} and  sharp Gagliardo-Nirenberg inequality \eqref{est:SGN}, we can obtain similar estimate to those in \eqref{est:GNenergy}
\begin{equation*}
 E(u(t))=\frac12\norm{\partial_x u(t,\cdot)}_2^2-\frac{\mu}{2} |u(t,0)|^2-\frac{1}{6}\norm{u(t,\cdot)}_{6}^{6}\geq \frac 12\|\partial_x u(t,\cdot)\|_{2}^2\left(1-\frac{\|u(t,\cdot)\|^4_{2}}{\|Q\|^4_{2}}\right).
\end{equation*}
 This together with the blow-up criterion \eqref{lwp:blp crit} implies the global existence of the solutions of \eqref{eq:dnls}  with mass less than $\|Q\|_{2}$ in $H^1(\R)$. Because of the repulsive ($\mu<0$) delta potential perturbation and the lack of the pseudo-conformal symmetry \eqref{sym:psdconf}, we have at the threshold mass that

\begin{theorem}
	\label{thm:thresh1}
	Let $\mu<0$, and radial $u_0\in H^1(\R)$ with  $\|u_0\|_{2}=\|Q\|_{2}$, then  the solution of \eqref{eq:dnls} is global and bounded in $H^1(\R)$.
\end{theorem}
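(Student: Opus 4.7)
\medskip
\noindent\emph{Proof plan.}
The strategy is to combine the sharp Gagliardo--Nirenberg inequality with conservation of energy and then, via a rescaling argument, to exploit the variational characterization of $Q$ to exclude blow-up. The sign $\mu<0$ enters twice: first to produce a uniform pointwise bound on $|u(t,0)|$, and then, through the radial (even) assumption, to rule out concentration of the rescaled solution at the origin.

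First, at the threshold mass $\|u(t)\|_2 = \|Q\|_2$, the sharp Gagliardo--Nirenberg inequality \eqref{est:SGN} gives $\tfrac{1}{6}\|u(t)\|_6^6 \leq \tfrac{1}{2}\|\partial_x u(t)\|_2^2$, hence $E_{\rm crit}(u(t)) \geq 0$. Since $\mu<0$, the identity in \eqref{con:energy} reads $E(u(t)) = E_{\rm crit}(u(t)) + \tfrac{|\mu|}{2}|u(t,0)|^2$, and conservation of energy yields the uniform pointwise bound $|u(t,0)|^2 \leq 2E(u_0)/|\mu|$ on the maximal lifespan $(-T_*,T^*)$.

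I then argue by contradiction to bound the kinetic energy. Assuming $\|\partial_x u(t_n)\|_2 \to \infty$ along some sequence $t_n$, set $\lambda_n := \|\partial_x Q\|_2/\|\partial_x u(t_n)\|_2 \to 0$ and $v_n(x) := \lambda_n^{1/2} u(t_n,\lambda_n x)$, so that $\|v_n\|_2 = \|Q\|_2$, $\|\partial_x v_n\|_2 = \|\partial_x Q\|_2$, and by $L^2$-critical scaling
\begin{equation*}
  0 \leq E_{\rm crit}(v_n) = \lambda_n^2\, E_{\rm crit}(u(t_n)) = \lambda_n^2\Bigl(E(u_0) + \tfrac{\mu}{2}|u(t_n,0)|^2\Bigr) \longrightarrow 0.
\end{equation*}
Thus $\{v_n\}$ is a minimizing sequence for the sharp Gagliardo--Nirenberg functional on the mass sphere $\{\|u\|_2 = \|Q\|_2\}$, and a standard concentration-compactness argument yields, along a subsequence, parameters $(\gamma_n,y_n)\in\R\times\R$ with $v_n(\cdot+y_n) \to e^{i\gamma_n}Q$ strongly in $H^1(\R)$.

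Finally, the radial (even) assumption on $u_0$ is preserved by \eqref{eq:dnls} since $\delta$ is even, so each $v_n$ is even. The case $|y_n|\to\infty$ is ruled out because an even profile concentrated at $y_n$ would require a second concentration bump at $-y_n$, violating the total mass $\|Q\|_2$; in the remaining bounded case, evenness of the limit forces $y_n\to 0$ since $Q$ has no nonzero period. Hence $v_n\to e^{i\gamma_n}Q$ in $H^1(\R)$, and by the one-dimensional Sobolev embedding $|v_n(0)|\to Q(0)>0$. But $|u(t_n,0)| = \lambda_n^{-1/2}|v_n(0)|\to+\infty$, contradicting the bound from the first step. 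Therefore $\|\partial_x u(t)\|_2$ remains bounded on $(-T_*,T^*)$, the blow-up criterion \eqref{lwp:blp crit} gives $T^* = T_* = +\infty$, and the uniform bound yields global boundedness in $H^1(\R)$. The main obstacle is the profile-extraction step, where the variational characterization of $Q$ has to be coupled with the radial constraint to force the concentration point to be $x=0$---precisely the point where the delta potential supplies the prior bound that drives the contradiction.
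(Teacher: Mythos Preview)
Your proof is correct and follows essentially the same route as the paper's: a priori bound on $|u(t,0)|$ from energy conservation with $\mu<0$, rescaling to a minimizing sequence for the sharp Gagliardo--Nirenberg inequality, concentration compactness to force convergence to $Q$, and the resulting contradiction $|u(t_n,0)|\to\infty$. The only cosmetic difference is that the paper applies concentration compactness directly in the radial (even) setting, obtaining $v_n e^{i\gamma_n}\to Q$ in $H^1$ without a translation parameter, whereas you first extract a translation $y_n$ and then eliminate it via the evenness argument; both are valid and lead to the same conclusion.
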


The result follows from the standard concentration compactness argument with the  conservation laws of mass and energy, see proof in Appendix \ref{app:a:gwp}. 

In contrast to the subcritical mass,  there are solitary waves $Q_{\omega, \mu}(x)e^{i\omega t}$ of \eqref{eq:dnls} with the super-critical mass $\|Q_{\omega, \mu}\|_{2}>\|Q\|_{2}$ from standard variational argument in \cite{FuOZ08:DNLS:sta, LeFFKS08:dNLS:instab}, at the same time, these solitary waves are orbitally unstable, and even strongly unstable in $H^1(\R)$, we can refer to \cite{LeFFKS08:dNLS:instab}, and references therein.  Therefore the global existence result in Theorem \ref{thm:thresh1} is sharp in $H^1(\R)$, and we also conjecture that for the repulsive delta potential case, the solutions of \eqref{eq:dnls} with $\norm{u_0}_2\leq \norm{Q}_2$ will scatter to the linear solution in both time directions in $L^2(\R)$ due to the result in \cite{Dod:NLS:sct}. For other long-time asymptotic behavior and scattering results of nonlinear Schr\"odinger equation with the repulsive delta potential, we can refer to \cite{BaV16:dNLS:sct, DeP11:dNLS:IST, IkI17:dNLS:sct, MaMS18:dNLS:sct}, and references therein.

\subsection{\texorpdfstring{The attractive potential case $\mu>0$}{The case epsilon=1}} Let us now  turn to the attractive potential case $\mu>0$, it is main part for the rest of the paper.
First, by the Sobolev inequality and  the Young inequality, we have for any $u\in H^1(\R)$ with $\norm{u}_2 < \norm{Q}_2$ that 
\begin{equation*}
\frac{\mu}{2} |u(t,0)|^2 \leq \frac{\mu}{2} \norm{\partial_x u(t,\cdot)}_2\norm{u(t,\cdot)}_2\leq  \frac 16\|\partial_x u\|_{2}^2\left(1-\frac{\|u(t,\cdot)\|^4_{2}}{\|Q\|^4_{2}}\right) + C(\mu, \norm{u(t,\cdot)}_2),
\end{equation*}
this together with sharp Gagliardo-Nirenberg inequality \eqref{est:SGN} implies that for any $u\in H^1(\R)$, we have
\begin{multline}\label{est:energy}
 E(u(t))=\frac12\norm{\partial_x u(t,\cdot)}_2^2-\frac{\mu}{2} |u(t,0)|^2-\frac{1}{6}\norm{u(t,\cdot)}_{6}^{6}\\
 \geq \frac 13\|\partial_x u(t,\cdot)\|_{2}^2\left(1-\frac{\|u(t,\cdot)\|^4_{2}}{\|Q\|^4_{2}}\right) - C(\mu, \norm{u(t,\cdot)}_2),
\end{multline}
which once again implies the controllability of the kinetic energy $\norm{\partial_x u(t)}^2_{2}$ of the solution of \eqref{eq:dnls} by the  mass and energy, and deduces global well-posedness of the solution of \eqref{eq:dnls} with $\|u_0\|_{2}<\|Q\|_{2}$ in $H^1(\R)$ due to \eqref{con:mass}, \eqref{con:energy} and the blow-up criterion \eqref{lwp:blp crit}. However,  in contrast to both the case $\mu=0$ and the  case $\mu<0$, there exist arbitrarily small solitary waves $ Q_{\omega, \mu}(x)e^{i\omega t}$ as follows, 
\begin{proposition}
	\label{prop:smallsoliton}
Let $\mu>0$. For all $M\in (0,\|Q\|_{2})$, there exists $\omega> \mu^2/4$ and a unique positive,  radially symmetric solution of
	$$-\omega Q_{\omega, \mu} + \partial^2_x Q_{\omega, \mu}+ \mu \delta Q_{\omega, \mu}+Q_{\omega, \mu}^{5}=0, \ \ \|Q_{\omega, \mu}\|_{2}=M.$$
\end{proposition}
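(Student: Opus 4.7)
The plan is to reduce the distributional equation to a second-order ODE on $(0,\infty)$ with a Robin-type condition at the origin, and to solve it explicitly in terms of translates and rescalings of the ground state $Q$ from \eqref{eq:grd sol1} (which has the closed form $Q(x)=3^{1/4}\sech^{1/2}(2x)$). First I would note that $Q_{\omega,\mu}\in H^1(\R)$ solves the equation if and only if
\begin{equation*}
-\omega Q_{\omega,\mu}+\partial_x^2 Q_{\omega,\mu}+Q_{\omega,\mu}^5=0\quad\text{classically on }\R\setminus\{0\},
\end{equation*}
together with the jump relation $Q_{\omega,\mu}'(0^+)-Q_{\omega,\mu}'(0^-)=-\mu Q_{\omega,\mu}(0)$ produced by the distributional second derivative of a function with a corner. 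Radial symmetry forces $Q_{\omega,\mu}'(0^-)=-Q_{\omega,\mu}'(0^+)$, so the jump relation collapses to the Robin condition $Q_{\omega,\mu}'(0^+)=-\tfrac{\mu}{2}Q_{\omega,\mu}(0)$.

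Next I would invoke standard ODE (phase-plane) analysis to show that every positive, $L^2$-integrable solution of $-\omega v+v''+v^5=0$ on $(0,\infty)$ is of the form
\begin{equation*}
v(x)=\omega^{1/4}Q\bigl(\sqrt{\omega}(x+y_0)\bigr),\qquad y_0\in\R,
\end{equation*}
since the family of translates of the rescaled ground state exhausts the one-dimensional unstable manifold at infinity. Substituting into the Robin condition and setting $a=\sqrt{\omega}\,y_0$ yields the scalar equation $-Q'(a)/Q(a)=\mu/(2\sqrt{\omega})$. Multiplying the ground state equation \eqref{eq:grd sol1} by $Q'$ and integrating from $x$ to $+\infty$ gives the Hamiltonian identity $(Q')^2=Q^2-Q^6/3$, whence for $a>0$
\begin{equation*}
f(a):=-\frac{Q'(a)}{Q(a)}=\sqrt{1-\tfrac{1}{3}Q^4(a)}.
\end{equation*}
Since $Q$ is smooth, even and strictly decreasing on $(0,\infty)$ from $Q(0)=3^{1/4}$ to $0$, the map $f:(0,\infty)\to(0,1)$ is a strictly increasing smooth bijection. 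Consequently the equation for $a$ admits a unique solution $a=a(\omega)>0$ precisely when $\mu/(2\sqrt{\omega})\in(0,1)$, i.e.\ when $\omega>\mu^2/4$, and $\omega\mapsto a(\omega)$ is a smooth, strictly decreasing bijection from $(\mu^2/4,+\infty)$ onto $(0,+\infty)$.

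Finally, changing variables produces
\begin{equation*}
\|Q_{\omega,\mu}\|_2^2=2\int_{a(\omega)}^{+\infty}Q^2(\xi)\,d\xi,
\end{equation*}
which is continuous and strictly increasing in $\omega\in(\mu^2/4,+\infty)$, with limits $0$ as $\omega\to(\mu^2/4)^+$ and $\|Q\|_2^2$ as $\omega\to+\infty$. The intermediate value theorem together with strict monotonicity then gives, for every $M\in(0,\|Q\|_2)$, a unique $\omega=\omega(M)>\mu^2/4$ with $\|Q_{\omega,\mu}\|_2=M$, and the corresponding positive radial profile $Q_{\omega,\mu}$ is unique by the construction above. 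The only mildly delicate point is the classification of positive decaying half-line solutions of the rescaled soliton equation; everything else is bookkeeping organised around the Hamiltonian identity and the monotonicity of $f$, which jointly account both for the threshold $\omega>\mu^2/4$ and for the allowed mass range $M\in(0,\|Q\|_2)$.
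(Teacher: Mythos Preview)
Your proof is correct, and it takes a genuinely different route from the paper's argument in Appendix~\ref{app:b}. The paper proceeds variationally: it minimises the energy $E$ over the sphere $\{u\in H^1_{\rm rad}:\|u\|_2=M\}$, shows $-\infty<I_M<0$ and that $I_M$ is strictly decreasing in $M$, and then extracts a minimiser via radial compactness; the Lagrange multiplier $\omega$ is identified afterwards, with the constraint $\omega>\mu^2/4$ read off from the spectral theory of $-\partial_x^2-\mu\delta$. Your approach instead solves the ODE directly: you reduce to the half-line problem with a Robin condition, classify positive decaying solutions as translated rescalings of $Q$, and then use the Hamiltonian identity to reduce everything to the scalar equation $\tanh(2a)=\mu/(2\sqrt\omega)$. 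This is exactly how one arrives at the explicit formula~\eqref{sol:grd sol2} cited in the paper, so your proof in fact recovers that closed form. The trade-off is clear: the variational argument is robust (it would survive replacing the quintic nonlinearity or the delta potential by less explicit objects) but does not by itself yield the uniqueness claimed in the proposition, whereas your ODE approach gives existence, uniqueness, and the explicit profile in one stroke, at the cost of relying on the integrability of the one-dimensional problem. The one place worth a sentence of justification is the ``mildly delicate point'' you flag: the classification of positive $L^2$ half-line solutions as shifted rescaled ground states follows from the first integral $(v')^2=\omega v^2-v^6/3$ together with the decay at $+\infty$, which pins the orbit to the homoclinic loop.
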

The proof of Proposition \ref{prop:smallsoliton} follows from classical variational argument in \cite{FuJ08:dNLS, FuOZ08:DNLS:sta}, we give alternative proof by in Appendix \ref{app:b}. 
Since the solitary waves $ Q_{\omega, \mu}(x)e^{i\omega t}$  themselves don't scatter to the linear solution,   the solution of \eqref{eq:dnls} with mass less than $\norm{Q}_2$ don't scatter in $L^2(\R)$ any more in this case. The necessary condition that $\omega>\mu^2/4$ is related to the eigenvalue of the Schr\"odigner operator $\partial^2_x + \mu \delta$ in \cite{AlGKH88:Phys:book, FuOZ08:DNLS:sta}. In fact, these solitary waves can be explicitly described as following (see  \cite{FuOZ08:DNLS:sta, GoHW04:dNLS:PhysD})
\begin{equation}
\label{sol:grd sol2}
{Q}_{\omega, \mu}(x)=
\left[ 3 \omega
\sech^2\left( 2\sqrt{\omega}|x| +\arctanh\left( \frac{\mu}{2\sqrt{\omega}}\right) \right) \right]^{\frac{1}{4}}.
\end{equation}
By the well-known stability theory in \cite{CaLi82:NLS:Stab, FuOZ08:DNLS:sta, GrSS87:NLS:stab},  these solitary waves are orbitally stable in $H^1(\R)$.

 In contrast to the non-existence result of finite time blow-up solutions with threshold mass for the repulsive case $\mu<0$  in Theorem \ref{thm:thresh1}, the main goal of this paper is to show the existence of finite time blow-up solutions of \eqref{eq:dnls} with threshold mass in the attractive case $\mu>0$ as follows,

\begin{theorem}
	\label{thm:thresh2}
  Let $\mu>0$ and $E_0\in \R$, there exist $t_0<0$ and a radial data $u(t_0)\in H^1(\R)$ with $$\|u(t_0)\|_{2}=\|Q\|_{2}, \ \ E(u(t_0))=E_0,$$ such that the corresponding solution $u(t)$ of \eqref{eq:dnls} blows up at time $T^*=0$ with the blow-up speed:
	\begin{equation}
	\label{blp:rate2}
	\|\partial_x u(t)\|_{2} \approx \frac{1}{|t|^{2/3}}, \quad \text{as}\quad t \to 0^{-}. 
	\end{equation}
\end{theorem}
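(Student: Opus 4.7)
My plan follows the compactness/modulation scheme of \cite{RaS11:NLS:mini sol} and related works \cite{KrLR13:HalfW:nondis, LeMR:CNLS:blp, MeRS14:NLS:blp}, adapted to the broken-symmetry setting induced by the delta potential. Since the pseudo-conformal symmetry \eqref{sym:psdconf} is no longer available, the minimal-mass blow-up must be built by hand through a refined profile and a backward-in-time compactness argument.

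\emph{Step 1: refined self-similar profile.} I look for an approximate blow-up solution of the form
\[
u(t,x) = \frac{1}{\lambda(t)^{1/2}}\,P_{b(t),\lambda(t)}\!\left(\frac{x}{\lambda(t)}\right)\,e^{i\gamma(t) - ib(t)x^2/4},
\]
with $P_{b,\lambda} = Q + b\,T_1 + b^2 T_2 + \mu\lambda\, S_1 + \cdots$ constructed order by order so that inserting the ansatz into \eqref{eq:dnls} leaves an error of arbitrarily high order in $(b, \mu\lambda)$. The profiles $T_j$ come from inverting the linearized operators $L_\pm$ around $Q$ against explicit source terms, while the correction $S_1$ encodes the delta-potential interaction at $x=0$. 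Projecting the residual on the generalized kernel directions $Q, \Lambda Q, yQ, y\Lambda Q$ yields a system of modulation ODEs for $(\lambda, b, \gamma)$; the key point is that the delta potential introduces a new forcing term of size $\mu\lambda$ into the equation for $b$, and balancing this against the natural dispersive terms produces an exact solution with $\lambda(t)\sim |t|^{2/3}$ as $t\to 0^-$, which dictates the rate \eqref{blp:rate2}.

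\emph{Step 2: backward construction and bootstrap.} Fix a sequence $t_n \uparrow 0$ and let $u_n$ solve \eqref{eq:dnls} backward with initial data at $t_n$ equal to the above profile, calibrated so that $\|u_n(t_n)\|_2 = \|Q\|_2$ exactly; the parameter $E_0$ is absorbed through an additional one-parameter adjustment of the profile that tunes the energy to the prescribed value. I decompose
\[
u_n(t,x) = \frac{1}{\lambda_n(t)^{1/2}}\bigl[P_{b_n,\lambda_n} + \varepsilon_n\bigr]\!\left(\frac{x}{\lambda_n(t)}\right)e^{i\gamma_n(t)-ib_n(t)x^2/4},
\]
with five orthogonality conditions on $\varepsilon_n$ fixing $(\lambda_n, b_n, \gamma_n)$ uniquely, and set up a bootstrap on $[t_0, t_n]$ with $t_0 < 0$ independent of $n$, under the a priori hypothesis $\|\varepsilon_n(t)\|_{H^1}\lesssim \lambda_n(t)^{\alpha}$ for some $\alpha>0$. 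Standard modulation calculus should then produce sharp ODEs for $(\lambda_n, b_n)$ matching those of the formal profile up to quadratic remainders.

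\emph{Step 3: energy-Morawetz control and passage to the limit.} The heart of the argument is a mixed energy-Morawetz functional for $\varepsilon_n$ that closes the bootstrap with the required gain. Because the delta potential is only form-bounded in $H^1$, the standard linearized energy must be corrected by a term involving the trace $|\varepsilon_n(t,0)|^2$, and the coercivity of the resulting quadratic form under the orthogonality conditions — replacing the usual spectral inequality for $L_\pm$ — is the main technical obstacle; this is where the restriction to radial data matters, since it rules out the scaling/translation null directions that would otherwise spoil positivity. Once uniform bounds are established, a weak-$H^1$ compactness argument extracts a limit $u_n(t_0)\rightharpoonup u_\infty(t_0)$ in $H^1(\R)$; radiality is preserved along the sequence and under \eqref{eq:dnls}, and passing to the limit in the modulation decomposition shows that the corresponding solution of \eqref{eq:dnls} has mass $\|Q\|_2$, energy $E_0$, and blows up at $T^* = 0$ with the rate \eqref{blp:rate2}.
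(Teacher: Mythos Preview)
Your overall strategy is correct and matches the paper's: refined profile, backward bootstrap with modulation and an Energy--Morawetz functional, then compactness. A few points, however, are misstated in ways that would cause trouble if implemented as written.

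\textbf{Orthogonality conditions.} You impose five, but in the radial setting there are only three modulation parameters $(\lambda, b, \gamma)$---no translation or Galilean shift---so the paper fixes $\varepsilon$ by exactly three conditions, $(\varepsilon, i\Lambda P_b)=(\varepsilon, |y|^2 P_b)=(\varepsilon, i\rho_b)=0$. Five conditions would over-determine the decomposition.

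\textbf{Structure of the profile expansion.} The expansion $Q+bT_1+b^2T_2+\mu\lambda S_1+\cdots$ is not quite right: the paper's ansatz is $P=Q+\sum b^{2j}\lambda^{k+1}P^+_{j,k}+i\sum b^{2j+1}\lambda^{k+1}P^-_{j,k}$, with even $b$-powers in the real part and odd in the imaginary part, and crucially an extra $\theta\frac{|y|^2}{4}P$ term inserted into the equation. The coefficient $\theta\sim\beta\lambda$ is what modifies the $b$-law to $b_s+b^2-\beta\lambda\approx 0$ and forces the $|t|^{2/3}$ rate; this mechanism should be made explicit.

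\textbf{Role of the delta potential in the energy step.} The trace correction is not the main obstacle: because the delta term enters the rescaled equation as $\lambda g(\cdot)$ with $\lambda\sim s^{-2}$, it is a lower-order perturbation in $H(s,\varepsilon)$ and the coercivity is just the standard one for $L_\pm$ under the above three orthogonalities (plus the almost-orthogonality $|(\varepsilon,Q)|\lesssim s^{-(K+1)}$ coming from mass conservation). The genuine issue in the monotonicity step is the uncontrolled term $(\lambda_s/\lambda)\langle f(P_b+\varepsilon)-f(P_b),\Lambda\varepsilon\rangle$ in $\frac{d}{ds}H$, which is cancelled by adding the localized virial $bJ(\varepsilon)=\tfrac{b}{2}\Im\int\partial_y\phi_A\,\partial_y\varepsilon\,\bar\varepsilon$.

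\textbf{Tuning the energy and closing compactness.} The prescribed energy $E_0$ is not obtained by adjusting the profile but by choosing the final data $(\lambda_1,b_1)$ at $s_1$ so that $\mathcal E(b_1,\lambda_1)=8E_0/\|yQ\|_2^2$ (Lemma~\ref{lem:initial conds}); this calibration is what lets the bootstrap close on $(\lambda,b)$ via the auxiliary function $\mathcal F$. Finally, weak $H^1$ compactness alone is not enough to pass to the limit: the paper upgrades to strong $H^s$ convergence ($s<1$) by a mass-localization argument and then invokes $H^{s_0}$ local well-posedness to propagate the limit on $[t_0,0)$.
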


\begin{remark} We give some remarks on this result.
\begin{enumerate}
\item Energy. Minimal mass blow-up solutions in Theorem \ref{thm:thresh2} have arbitrary energy $E_0\in \R$, which is different with the positive energy $E_{\rm crit}(S(t))$of  the explicit pseudo-conformal blow-up solution $S(t,x)$  in \eqref{sol:psdconf blp}.
	
\item Blow-up speed. The blow-up speed $|t|^{-2/3}$ in \eqref{blp:rate2} is different with the pseudo-conformal blow-up speed $|t|^{-1}$ in \eqref{sol:psdconf blp},  the self similar blow-up speed ${|t|^{-\frac 12}}$ and the log-log blow-up speed for $L^2$-critical NLS in \cite{BoW97:NLS:blp, MeR03:NLS:blp, MeR04:NLS:blp, MeR05:NLS:blp, MeR06:NLS:blp, Pe:NLS:blp}.  The possible blow-up speed for the critical problem is an interesting problem, we can refer \cite{MaMR14:Kdv:clas, MaMR15:Kdv:mini sol, MaMR15:Kdv:exot} for the $L^2$-critical KdV equation, \cite{KrST09:NLW:blp, KrS14:NLW:blp} for the $\dot H^1$-critical wave equation and references therein. 
	
\item  Profile. The analysis provides the following profile of minimal mass blow-up solution
	\begin{equation}\label{sol:blp form}
	u(t,x)=\frac{1}{\lambda^{\frac{1}{2}}(t)}
	Q\left(\frac{x}{\lambda (t)}\right)e^{-i\frac{2}{3} \frac{|x|^2}{4t}}e^{i\gamma(t)}
	+v\left(t,{x}\right)
	\end{equation}
	where $Q$ is the ground state solution of $L^2$-critical NLS, and
	$$\lim_{t\rightarrow 0}\norm{v(t)}_{2}=0,\quad \text{and} \quad
	\lambda(t)\approx C { |t|^{2/3}} , \quad \text{as} \quad t \to 0^-.
	$$
See \eqref{est:small pars} and \eqref{est:small pars2}. 

\item Uniqueness. The uniqueness of minimal mass blow-up solution is an  important problem, which is closely related to classify the compact elements of the flow in the Kenig-Merle's concentration-compactness-rigidity argument \cite{KeM06:NLS:sct}, we can also refer to \cite{Dod:NLS:L2thrh1, Dod:NLS:L2thrh2, DuM09:NLS:thresh sol, DuR10:NLS:thresh sol, KiTV09:NLS:2d rad, LiZ09:NLS:thresh sol, MiWX15:Hart:thresh sol, TaVZ08:NLS:crit element}, and references therein. 
\end{enumerate}
\end{remark}

The results in Theorem \ref{thm:thresh1} and Theorem \ref{thm:thresh2} show the completely different consequence determined by the delta potentials in the existence of minimal mass blow-up solutions of \eqref{eq:dnls} in $H^1(\R)$.   Due to the refined blow-up profile to the rescaled equation is stable in a very precise sense, the construction proof of Theorem \ref{thm:thresh2} using the Energy-Morawetz argument and compactness method as well as the modulation analysis is close to the original non perturbative argument in \cite{RaS11:NLS:mini sol} (see also \cite{KrLR13:HalfW:nondis, LeMR:CNLS:blp, Mart05:Kdv:N sol, MaP17:BO:mini sol}), which is different with the perturbation argument in \cite{BaCD11:NLS:blp}. More precisely,  we adapts the compactness argument under the uniform estimates for the special solution on sufficiently far away time rescaled interval, which in fact can be satisfied by modulation analysis, the Energy-Morawetz estimate of the remainder term $\varepsilon(t,x)$ and the bootstrap argument. The related application of the Energy-Morawetz estimate in the blow-up dynamics can also be found in \cite{MeRS14:NLS:blp, RaS11:NLS:mini sol}. The Energy-Morawetz estimate is also successfully applied by B.~Dodson in \cite{Dod:NLW:sct1, Dod:NLW:sct2} to obtain the global well-posedness and scattering result for the radial solution of the defocusing, nonlinear wave equation in the critical Sobolev space $\dot H^s(\R^3)$ for $\frac12\leq s<1$.

For other results of nonlinear Schr\"odinger equation with the attractive delta potential in $H^1(\R)$, we can  refer to  \cite{FuOZ08:DNLS:sta, GoHW04:dNLS:PhysD, LeFFKS08:dNLS:instab, MaMS20:dNLS:stab, TaX:dNLS:unstab} and references therein for the stability analysis of the solitary waves.

\subsection{Notation}\label{sect:notation} Let us collect the notation and some well-known facts used in this paper. Throughout the paper, we  use the notation $X \lesssim Y$, or $Y \gtrsim X$ to denote the statement that $X\leq C Y$  for some constant $C$, which may vary from line to line.  We use $X=O(Y)$ synonymously with $|X| \lesssim Y$.  We use $X\approx Y$ to denote the statement $X\lesssim Y \lesssim X$. 

Since the appearance of the Dirac delta potential destroys spatial translation invariance of \eqref{eq:dnls} for $\mu > 0$, we only deal with the radial case. The $L^2$ scalar product and $L^r$ norm ($r\geq 1$) are  denoted by 
\begin{equation*}
\psld{u}{v}=\Re\left(\int_{\R}u(x)\bar v(x)\, dx\right),\quad \|u\|_{r} = \left(\int_{\R} |u|^r\, dx\right)^{\frac 1r}.
\end{equation*}
We denote the radial functions in $H^1(\R)$ by $H^1_{\rm rad}(\R)$.

We fix the notation: for $\mu>0$, $u\in \C$,  denote
\begin{equation*}
f(u)=|u|^4 u;\quad 
g(u)=\mu\, \delta u;\quad
F(u)=\frac{1}{6}|u|^{6};\quad
G(u)=\frac{1}{2} \mu\, \delta |u|^{2}.
\end{equation*}
where $\delta=\delta(x)$ is the Dirac delta distribution at the origin and obeys the following scaling property by simple distribution calculation (see \cite{Gr14:CFA}): 
\begin{equation}\label{delta:scal}
\forall x\in \R,\ \lambda>0, \quad \delta(\lambda x)= \frac{1}{\lambda} \delta(x).
\end{equation}

Identifying $\mathbb C$ with $\R^2$, we denote  the Fr\'echet-derivative of functions $f, g, F, G$ by $df$, $dg$, $dF$ and $dG$.  Let  $\Lambda$ be the infinitesimal generator of the $L^2$-scaling transformation, i.e. 
\begin{equation*}
\Lambda=\frac{1}{2}+y\cdot\partial_y.
\end{equation*}
Without loss of generality, we may assume that $\mu^2/4< \omega=1$ in the rest of the paper, the linearized operator around $Qe^{it}$ is 
\begin{equation*}
L_+:=-\partial^2_y+1-5Q^{4},\qquad
L_-:=-\partial^2_y+1-Q^{4}.
\end{equation*}
and the generalized kernel of $$\begin{pmatrix}0 & L_-\\-L_+&0\end{pmatrix}$$ is non-degenerate and spanned by 
 the symmetries of the equation (see \cite{Kw89:EPDE:uniq, We85:NLS:Mod} and \cite{ChGuNaTs07:NLS:spt}). It is described in $H^1_{\rm rad}(\R)$ by  the algebraic relations (we define $\rho$ as the unique radial solution to $L_+\rho =|y|^2Q$)
\begin{equation}\label{fact:algebra}
L_-Q=0,\quad L_+\Lambda Q=-2Q,\quad L_-|y|^2Q=-4\Lambda Q,\quad L_+\rho =|y|^2Q.
\end{equation}
From these algebraic relations, we have
\begin{equation}\label{est:basic fact2}
\psld{Q}{\rho}=-\frac12 \psld{L_+ \Lambda Q}{\rho} =-\frac12 \psld{ \Lambda Q}{L_+ \rho} =-\frac12 \psld{ \Lambda Q}{|y|^2Q} = \frac12 \norm{yQ}^2_2.
\end{equation}

Denote by $\mathcal Y$ the set of radially symmetric functions $f\in \mathcal{C}^\infty(\R\backslash \{0\})$ such that
$$
\forall \alpha\in\N,\quad \exists C_\alpha,\ \kappa_\alpha>0, \ \forall x\in\R\backslash\{0\},\quad 
|\partial^\alpha f(x)|\leq C_\alpha (1+|x|)^{\kappa_\alpha} Q(x).
$$
It follows from the kernel properties of $L_+$ and $L_-$, and \cite[Appendix A]{CoLe11:NLS:slt} or proof of Lemma 3.2 in \cite{MeRS14:NLS:blp} for related arguments) that
\begin{align}
&\forall g\in \mathcal{Y}, \ \exists f_+\in \mathcal{Y}, \ L_+ f_+ = g,\label{eq:Lp}\\
&\forall g\in \mathcal{Y}, \ \psld{g}{Q}=0, \ \exists f_-\in \mathcal{Y}, \ L_- f_- = g.\label{eq:Ln}
\end{align}

For the sake of the localization argument in Section \ref{Sect3: uniform ests}, We introduce the localized function $\phi$ and its scaling. Let $\phi:\R\to \R$ be a smooth even and convex function, nondecreasing on $\R^+$,
such that   
\begin{equation*}
\phi(r)=
\left\{
\begin{aligned}
&\frac12r^2& \text{ for }&r<1,\\
&3r+e^{-r}& \text{ for }&r>2,
\end{aligned}
\right.
\end{equation*}
and set $\phi(x)=\phi(|x|)$.
For  $A\gg1$,  define $\phi_A$ by $\phi_A(x)=A^2\phi\left(\frac{x}{A}\right)$.

This paper is organized as follows. In Section \ref{Sect2: blp profile}, we construct the refined blow-up profile by  according to the Dirac potential perturbation, and obtain the approximate blow-up law affected by the Dirac potential perturbation. 
In Section \ref{Sect3: uniform ests},  we obtain the uniform backwards estimates of the remainder $\varepsilon$ and modulation parameters $ \lambda, b$ on the rescaled time interval by combining the modulation analysis,  the Energy-Morawetz argument with the bootstrap argument on the sufficiently far away rescaled time interval. In Section \ref{Sect4: Exist}, we can construct minimal mass blow-up solution by combining the compactness argument  with the uniform backwards estimates in Section \ref{Sect3: uniform ests}. 
In Appendix \ref{app:a:gwp}, we use the variational argument, and the conservation laws of mass and energy to show the global existence of radial solution of \eqref{eq:dnls} with threshold mass for the repulsive potential case $\mu<0$ in $H^1(\R)$.
In Appendix \ref{app:b}, we use the variational argument to show Proposition \ref{prop:smallsoliton}, that is, the existence of arbitrarily small radial soliton solutions of \eqref{eq:dnls} for the attractive potential case $\mu>0$.

\noindent \subsection*{Acknowledgements.}
G. Xu  was supported by National Key Research and Development Program of China (No. 2020YFA0712900) and by NSFC (No. 11831004). X. Tang was supported by NSFC (No. 12001284).   

\section{Construction of the refined blow-up profile}\label{Sect2: blp profile}

In this section, we construct the refined blow-up profile according to the Dirac delta potential perturbation. 

\subsection{Refined blow-up profile}
We start with the heuristic argument justifying the construction as that in \cite{RaS11:NLS:mini sol} (see also \cite{KrLR13:HalfW:nondis} \cite{LeMR:CNLS:blp} \cite{MaP17:BO:mini sol}). Since the Dirac delta potential term $\mu \delta u$ destroys the invariances of spatial translation and Galilean transformation of \eqref{eq:dnls}, we try to look for a solution with the structure
\begin{equation}\label{blp:form}
u(t,x)=\frac{1}{\lambda^{\frac{1}{2}}(s)}w(s,y)e^{i\gamma(s)-i\frac{b(s)|y|^2}{4}},\qquad
\frac{ds}{dt}=\frac{1}{\lambda^2},\qquad 
y=\frac{x}{\lambda(s)},
\end{equation}
where  $(s,y)$ are the rescaled variables, and  the parameters $\lambda>0, b, \gamma$ is to be determined later. By inserting \eqref{blp:form} into \eqref{eq:dnls},  the profile function $w$, and  $\lambda$, $b$ and $\gamma$ should satisfy the following rescaled equation
\begin{multline}\label{eq:w}
iw_s+\partial^2_y w-w+f(w)+\lambda g(w)  \\-i\left(\frac{\lambda_s}{\lambda}+ b\right)\Lambda w
+(1-\gamma_s)w
+(b_s+b^2)\frac{|y|^2}{4}w-b\left(b+\frac{\lambda_s}{\lambda}\right)\frac{|y|^2}{2}w
=0.
\end{multline}
where we used the fact that \eqref{delta:scal}.
Since we look for the blow-up solution, the parameter $\lambda(s)$ should converge to zero as $s\to \infty$.
Therefore, we expect that the parameters $\lambda$, $b$, $\gamma$ satisfy the modulation equations
\begin{equation}\label{ode:mod}
\frac{\lambda_s}{\lambda}+ b=0,\quad b_s+b^2=0,\quad 1-\gamma_s=0,
\end{equation}
and that
\begin{equation}\label{w:app sol1}
w(s,y)=Q(y)
\end{equation}
is an approximate solution of \eqref{eq:w} with zero order term of $\lambda$ and $b$. However, the first order error term $\lambda g(Q)$ cannot be regarded as small perturbation in minimal blow-up analysis due to the slow variation property of the parameter $\lambda$ (In fact $\lambda(s) \approx s^{-2}$ for sufficiently large $s$, see Lemma \ref{lem:sol:ode app} and \eqref{est:small est:s} in Proposition \ref{prop:unif est}). We rewrite \eqref{eq:w} as follows
\begin{multline}\label{eq:w2}
iw_s+\partial^2_y w-w+f(w)+\lambda g(w) +\theta \frac{|y|^2}{4}w \\-i\left(\frac{\lambda_s}{\lambda}+ b\right)\Lambda w
+(1-\gamma_s)w
+(b_s+b^2-\theta)\frac{|y|^2}{4}w-b\left(\frac{\lambda_s}{\lambda}+ b\right)\frac{|y|^2}{2}w
=0,
\end{multline}
where $\theta$ is determined later and the additional term $\theta  |y|^2 w/4 $  is introduced to construct the refined blow-up profile $P(s,y)$ with higher order terms of $\lambda$ and $b$ in Proposition \ref{prop:profile} (In fact, it is related to the solvability of the linearized operators $L_{\pm}$ in \eqref{def:Sjk}), and will modify the modulation equations in \eqref{ode:mod}, and is responsible for the blow-up result obtained in Theorem~\ref{thm:thresh2}. 

\bigskip

Fix $K\in \N$, $K\geq 7 $ is sufficient in the proof of Theorem \ref{thm:thresh2}, and define
$$
 \Sigma_K= \{(j,k)\in \N^2 \ | \ j+k\leq K \}.
$$

\begin{proposition}\label{prop:profile}
Let $\lambda(s)>0$ and $b(s)\in \R$ be $\mathcal C^1$ functions of $s$ such that $\lambda(s)+|b(s)|\ll 1$.
\\
{\rm (i) Existence of a refined blow-up profile.}
For any $(j,k)\in \Sigma_K$, there exist real-valued functions $P_{j,k}^{\pm}\in \mathcal Y$ and $\beta_{j,k}\in\R$ such that
$P(s,y)=\tilde P_K(y;b(s),\lambda(s))$, where $\tilde P_K$ is defined by
\begin{equation}\label{def:P}
   \tilde P_K(y;b,\lambda):=Q(y)+\sum_{(j,k)\in\Sigma_K} b^{2j}\lambda^{k+1} P_{j,k}^+(y)
                +i\sum_{(j,k)\in\Sigma_K}  b^{2j+1}\lambda^{k+1} P_{j,k}^-(y)
\end{equation}
satisfies
\begin{equation}\label{eq:P}
i\partial_s P +\partial^2_y P-P+f(P)+\lambda g(P)
+\theta \frac{|y|^2}{4}P =\Psi_K,
\end{equation}
where $\theta(s)=\tilde \theta(b(s),\lambda(s))$ is defined by
\begin{equation}\label{def:theta}
\tilde \theta(b,\lambda) = \sum_{(j,k)\in\Sigma_K} b^{2j}\lambda^{k+1} \beta_{j,k},
\end{equation}
and $\Psi_K$ satisfies 
\begin{align}
\sup_{y\in \R} \left(e^{\frac{|y|}{2}}\left( |\Psi_K(y)| + |\partial_y \Psi_K(y)|\right)\right) 
 \lesssim \lambda\left(\Big|\frac{\lambda_s}{\lambda}+b \Big|+\left|b_s+b^2-\theta\right|\right) 
 +(|b|^2+\lambda)^{K+2} \label{est:error}.
\end{align}
{\rm (ii) Rescaled blow-up profile.} Let
\begin{equation}\label{def:Pb}
P_b(s,y)=P(s,y)e^{-i\frac{b(s)|y|^2}{4}},
\end{equation}
then we have
\begin{multline}
i\partial_s P_b+\partial^2_y P_b-P_b+f(P_b)+\lambda g(P_b)
-i\frac{\lambda_s}{\lambda}\Lambda P_b\\=-i\left(\frac{\lambda_s}{\lambda}+b\right)\Lambda P_b+(b_s+b^2-\theta)\frac{|y|^2}{4}P_b+\Psi_K e^{-i\frac{b|y|^2}{4}}.
\label{eq:Pb}\end{multline}
{\rm (iii) Mass and energy properties of the  blow-up profile.}
Let
\begin{equation}\label{def:Pblamb}
P_{b,\lambda,\gamma}(s,y) = \frac 1{\lambda^{\frac 12}} P_b\left(s, y\right)e^{i\gamma},
\end{equation}
then we have
\begin{equation}\label{est:Pblamb dmass}
  \left|\frac d{ds}\int_{\R} |P_{b,\lambda,\gamma}|^2\,dy \right|  \lesssim
  \lambda\left(\Big|\frac{\lambda_s}{\lambda}+ b\Big|+\left|b_s+b^2-\theta\right|\right) 
  +(|b|^2+\lambda)^{K+2},
\end{equation}
\begin{equation}\label{est:Pblamb denergy}
  \left|\frac d{ds} {E(P_{b,\lambda,\gamma})}\right|\lesssim
  \frac 1{\lambda^2} \left(\Big|\frac{\lambda_s}{\lambda}+b\Big|+\left|b_s+b^2-\theta\right|
  +(|b|^2+\lambda)^{K+2}\right).
\end{equation}
Moreover, for any $(j,k)\in\Sigma_K,$ there exist  $\eta_{j,k} \in \R$ such that
\begin{equation}\label{est:Pblamb energy}
  \left|E(P_{b,\lambda,\gamma}) - \frac{1}{8}\cdot  \mathcal{E}(b,\lambda)  \cdot \int |y|^2Q^2 dy\right|
  \lesssim \frac{(b^2+\lambda)^{K+2}}{\lambda^2},
\end{equation}
where
\begin{equation}\label{def:Eblamb}
  \mathcal{E}(b,\lambda) =  \frac{b^2}{\lambda^2}
  - 2\beta \frac{\lambda}{\lambda^2} 
  + \frac{\lambda}{\lambda^2} \sum_{(j,k)\in \Sigma_K, j+k\geq 1} b^{2j} \lambda^{k} \eta_{j,k}, \quad \beta=2\mu \frac{ Q(0)^2}{\norm{yQ}_2^2}=4\frac{G(Q)}{\norm{yQ}^2_2}>0.
\end{equation}
\end{proposition}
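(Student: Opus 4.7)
My approach is a formal expansion of the profile in the two small parameters $b$ and $\lambda$, reducing the construction to an inductive hierarchy of linear elliptic problems driven by $L_\pm$. After substituting \eqref{def:P}--\eqref{def:theta} into \eqref{eq:P} and enforcing the modulation identities $\lambda_s/\lambda \approx -b$ and $b_s \approx \theta - b^2$ (whose defects contribute exactly the first bracket in \eqref{est:error}), I would split into real and imaginary parts. The coefficient of $b^{2j}\lambda^{k+1}$ in the real part and of $b^{2j+1}\lambda^{k+1}$ in the imaginary part of the resulting polynomial identity yields
\[
L_+ P^+_{j,k} = R^+_{j,k} - \beta_{j,k}\tfrac{|y|^2}{4}Q, \qquad L_- P^-_{j,k} = R^-_{j,k},
\]
where the $R^\pm_{j,k}$ depend only on profiles constructed at earlier induction steps, together with the singular source $\mu\,\delta(y)\,Q(0)$ at the base order $(0,0)$ coming from $\lambda g(P)$.

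I would then iterate on $(j,k)$: at each step, solve the $L_+$ equation modulo the undetermined $\beta_{j,k}$ using the invertibility of $L_+$ on the even class (at $(0,0)$ the $\delta$-source produces a derivative jump at $y=0$, which is admissible in $\mathcal Y$), then choose $\beta_{j,k}$ so that $\langle R^-_{j,k}, Q\rangle = 0$, which is the Fredholm condition for solving the $L_-$ equation. Using $L_+\rho = |y|^2Q$ and $\langle\rho,Q\rangle = \tfrac12\|yQ\|_2^2$ from \eqref{fact:algebra}--\eqref{est:basic fact2}, this is a non-degenerate scalar equation (coefficient $\tfrac18\|yQ\|_2^2\neq 0$) that uniquely determines $\beta_{j,k}$. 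At the base order, the identity $L_+^{-1}Q = -\tfrac12\Lambda Q$ combined with $\Lambda Q(0) = Q(0)/2$ gives
\[
\langle L_+^{-1}(\mu\delta Q(0)), Q\rangle = \mu Q(0)\cdot(L_+^{-1}Q)(0) = -\tfrac14\mu Q(0)^2,
\]
forcing $\beta_{0,0} = 2\mu Q(0)^2/\|yQ\|_2^2 = \beta$, which matches \eqref{def:Eblamb}. Once all profiles and $\beta_{j,k}$ up to total order $K$ are constructed, the residual in \eqref{eq:P} consists of terms of total order $(|b|^2+\lambda)^{K+2}$ plus multiples of the modulation defects, all decaying pointwise like $Q$ and hence like $e^{-|y|/2}$, which yields \eqref{est:error}.

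Part (ii) is a direct chain-rule computation on $P_b = Pe^{-ib|y|^2/4}$: expanding $\partial_y^2$, $\partial_s$, and $\Lambda$ through the phase and inserting \eqref{eq:P} produces \eqref{eq:Pb}, the $\theta|y|^2 P/4$ correction cancelling the phase-generated terms up to the modulation defects and $\Psi_K e^{-ib|y|^2/4}$. For part (iii), \eqref{est:Pblamb dmass} follows by multiplying \eqref{eq:P} by $\bar P$, integrating in $y$, and taking the imaginary part: the contributions of $f(P)$, $\lambda g(P)=\lambda\mu\delta P$, and $\theta|y|^2 P/4$ are all real, $\partial_y^2$ integrates by parts to a real expression, and only $\Im\int P\bar\Psi_K\,dy$ survives, controlled by \eqref{est:error}. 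Estimate \eqref{est:Pblamb denergy} is the analogous Hamiltonian identity for $E$, with the $\lambda^{-2}$ arising from the rescaling of the kinetic energy. For \eqref{est:Pblamb energy}, a Taylor expansion of $E(P_{b,\lambda,\gamma})$ in $(b,\lambda)$, using the cancellation $\tfrac12\|Q'\|_2^2 = \tfrac16\|Q\|_6^6$ coming from $E_{\rm crit}(Q)=0$, produces the leading kinetic contribution $b^2\|yQ\|_2^2/(8\lambda^2)$ from the quadratic phase and the delta term $-\mu Q(0)^2/(2\lambda) = -2\beta\lambda\|yQ\|_2^2/(8\lambda^2)$, which assembles into $\tfrac18\mathcal{E}(b,\lambda)\|yQ\|_2^2$; the higher-order contributions collect into $\sum b^{2j}\lambda^k\eta_{j,k}$.

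\textbf{Main obstacle.} The principal difficulty is bookkeeping the coupled hierarchy in the presence of the singular delta source: $\mu\delta(y)Q(0)$ at base order creates a profile with a derivative jump at $y=0$, and these jumps propagate into all higher-order sources $R^\pm_{j,k}$. Verifying simultaneously that each $R^\pm_{j,k}\in\mathcal{Y}$ so that \eqref{eq:Lp}--\eqref{eq:Ln} apply, that the scalar equation for $\beta_{j,k}$ is always non-degenerate, and that $\Psi_K$ satisfies the pointwise exponential bound \eqref{est:error}, demands careful tracking of which monomials $b^{2j}\lambda^{k+1}$ are produced by $\partial_s$, by the Taylor expansion of $f(P)$, and by the $\theta|y|^2 P/4$ correction.
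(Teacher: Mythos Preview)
Your approach is essentially the same as the paper's, and the strategy is correct. One point deserves clarification: you write that both $R_{j,k}^\pm$ depend ``only on profiles constructed at earlier induction steps,'' but then choose $\beta_{j,k}$ to force $\langle R_{j,k}^-,Q\rangle=0$. These two statements are incompatible---if $R_{j,k}^-$ depended only on prior data, $\beta_{j,k}$ could not influence it. In fact (and this is what the paper's computation of $i\partial_s P$ makes explicit), the imaginary coefficient at order $b^{2j+1}\lambda^{k+1}$ contains the term $-\big((k+1)+2j\big)P_{j,k}^+$ coming from $\lambda_s/\lambda\approx -b$ and $b_s\approx -b^2+\theta$ acting on the real part of $P$. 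Hence the $L_-$ equation reads
\[
L_-P_{j,k}^- \;=\; F_{j,k}^- - \big((k+1)+2j\big)P_{j,k}^+,
\]
with $F_{j,k}^-$ depending on prior data and $P_{j,k}^+$ at the \emph{current} step depending linearly on $\beta_{j,k}$. This is precisely why the Fredholm condition determines $\beta_{j,k}$, and the coefficient in the scalar equation is $\big((k+1)+2j\big)\cdot\tfrac18\|yQ\|_2^2$ rather than $\tfrac18\|yQ\|_2^2$; still nonzero, so the non-degeneracy claim stands. Your base-order computation implicitly uses this coupling correctly (the $(k+1)+2j=1$ case), so this is a presentation slip rather than a genuine gap. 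The remaining parts (ii)--(iii) of your sketch match the paper's argument.
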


\begin{remark}\label{rem:blp rate}
Compared with \eqref{eq:w2} and \eqref{eq:P}, the refined blow-up profile $P(s,y)$ with higher order terms in $\lambda$ and $b$ is an approximate solution of $w$ in \eqref{eq:w}, the corresponding blow-up law  is expected from \eqref{eq:w2} and \eqref{est:error}  that
\begin{equation*}
\frac{\lambda_s}{\lambda} + b \approx 0, \quad b_s+b^2-\theta \approx 0,
\end{equation*} 
which shows the effect of the Dirac delta potential perturbation and  has an  approximate solution  in the rescaled variable that $$\lambda_{\rm app} (s)\approx b^2_{\rm app}(s) \approx s^{-2}.$$
(see Lemma \ref{lem:sol:ode app}.)  The above blow-up law is different  with the unperturbed case (i.e. $\mu=0$)
 \begin{equation*}
\frac{\lambda_s}{\lambda} + b =0, \quad   b_s+b^2=0.\ \  \Longrightarrow\ \  \lambda(s)=b(s)=s^{-1}.
\end{equation*}
\end{remark}

The proof follows from the argument in \cite {MeRS14:NLS:blp, RaS11:NLS:mini sol},  we can also refer to \cite{KrLR13:HalfW:nondis, LeMR:CNLS:blp, MaP17:BO:mini sol} and references therein.  

\begin{proof} [Proof of Proposition \ref{prop:profile}]We divide the proof into several steps.
	
\noindent{\bf Step 1: } We firstly consider the construction of the refined blow-up profile $P(s,y)$ in $(i)$. Let
\[
P=Q+\lambda Z, \qquad \quad 
Z=\sum_{(j,k)\in\Sigma_K} b^{2j}\lambda^{k} P_{j,k}^+
+i\sum_{(j,k)\in\Sigma_K}  b^{2j+1}\lambda^{k} P_{j,k}^-,
\]
$$\theta=  \sum_{(j,k)\in\Sigma_K} b^{2j}\lambda^{k+1} \beta_{j,k} ,$$
where $\lambda=\lambda(s)>0$, $b=b(s)$ are time dependent functions,  and $P_{j,k}^{\pm}\in \mathcal Y$ and $\beta_{j,k}\in \R$ are to be determined later such that $P(s,y)$ is an approximate solution of $w$ in \eqref{eq:w} or \eqref{eq:w2} with error estimate \eqref{est:error}. 
Now we set
$$
\Psi_K=i\partial_s P +\partial^2_yP-P+f(P)+ \lambda g(P)
+\theta  \frac{|y|^2}{4}P, \quad 
$$
where $f(P)=|P|^{4}P, \, \ g(P)= \mu \delta P$.

We divide the computation into four steps as follows.

\noindent{\bf Estimate of $i \partial_s P$}. By the definition of $P$, we have
\begin{align*}
i \partial_s P& = i \frac{\lambda_s}{\lambda} \sum_{(j,k)\in\Sigma_K} (k+1) b^{2j} \lambda^{k+1}P_{j,k}^+
+ i b_s \sum_{(j,k)\in\Sigma_K} 2j b^{2j-1}\lambda^{k+1}P_{j,k}^+\nonumber\\
&\quad  -  \frac{\lambda_s}{\lambda} \sum_{(j,k)\in\Sigma_K} (k+1) b^{2j+1}\lambda^{k+1}P_{j,k}^-
- b_s \sum_{(j,k)\in\Sigma_K} (2j+1) b^{2j}\lambda^{k+1}P_{j,k}^-\nonumber\\
&=-i b \sum_{(j,k)\in\Sigma_K} (k+1) b^{2j}\lambda^{k+1}P_{j,k}^{+}- i \left( b^2-\theta \right)   \sum_{(j,k)\in\Sigma_K} 2j b^{2j-1}\lambda^{k+1}P_{j,k}^{+}\nonumber\\
&\quad  + b  \sum_{(j,k)\in\Sigma_K} (k+1) b^{2j+1}\lambda^{k+1}P_{j,k}^{-}
+\left( b^2-\theta \right)  \sum_{(j,k)\in\Sigma_K} (2j+1) b^{2j}\lambda^{k+1}P_{j,k}^{-}+\Psi^{P_s}
\end{align*}
where
\begin{align}
\Psi^{P_s}
&=\left( \frac{\lambda_s}{\lambda}+b\right) \sum_{(j,k)\in\Sigma_K} (k+1) b^{2j}\lambda^{k+1}\left(iP_{j,k}^{+} - b P_{j,k}^-\right)\nonumber \\
&\quad + \left(b_s+b^2-\theta \right) \sum_{(j,k)\in\Sigma_K}  b^{2j-1}\lambda^{k+1} \left(2j i P_{j,k}^+-(2j+1)b P_{j,k}^-\right).
\label{def:Ps}
\end{align}
By the definition of $\theta$ in \eqref{def:theta}, we rewrite  
\begin{align}
i \partial_s P =  -i\sum_{(j,k)\in\Sigma_K} \left((k+1)+2j\right) b^{2j+1}\lambda^{k+1}P_{j,k}^{+}  +& i \sum_{j,k\geq 0} b^{2j+1}\lambda^{k+1}F_{j,k}^{P_s,-} \nonumber \\
& + \sum_{j,k\geq 0} b^{2j}\lambda^{k+1}F_{j,k}^{P_s,+}
+\Psi^{P_s},\label{cal:ps}
\end{align}
where $F_{j,k}^{P_s,\pm}$  are defined for $j,k\geq 0$ by
\begin{align*}
F_{j,k}^{P_s,-} = & \sum^{j+1}_{j'=0} \sum^{k-1}_{k'=0} 2(j-j'+1) \beta_{j',k'}P^{+}_{j-j'+1, k-k'-1}, \\
F_{j,k}^{P_s,+}= &  \sum^{j}_{j'=0} \sum^{k-1}_{k'=0} \Big(2(j-j')+1\Big) \beta_{j',k'}P^{-}_{j-j', k-k'-1} + (k+1)P^{-}_{j-1,k}+(2j-1)P^{-}_{j-1,k},
\end{align*}
which implies that $F_{j,k}^{P_s,\pm}$ only depends on functions $P_{j',k'}^\pm$ and parameters $\beta_{j',k'}$ for 
$(j',k')\in \Sigma_K$ such that either $k'\leq k-1$ and $j'\leq j+1$ or  $k'\leq k$ and $j'\leq j-1$.

\noindent {\bf Estimate of $\partial^2_y P - P + f(P) $}. By the fact that $\partial^2_y Q - Q + f(Q)=0$, we have
\begin{align*}
\partial^2_y P - P + f(P)  =\ &  \partial^2_y\left(Q(y)+\sum_{(j,k)\in\Sigma_K} b^{2j}\lambda^{k+1} P_{j,k}^+(y)
+i\sum_{(j,k)\in\Sigma_K}  b^{2j+1}\lambda^{k+1} P_{j,k}^-(y)\right)\nonumber \\
 & - \left(Q(y)+\sum_{(j,k)\in\Sigma_K} b^{2j}\lambda^{k+1} P_{j,k}^+(y)
 +i\sum_{(j,k)\in\Sigma_K}  b^{2j+1}\lambda^{k+1} P_{j,k}^-(y) \right) \\
 &\quad  +f(Q) +  f(Q+\lambda Z)  - f(Q)  \nonumber
 \\
= & 
- \sum_{(j,k)\in\Sigma_K} b^{2j}  \lambda^{k+1} L_+ P_{j,k}^{+}
-i\sum_{(j,k)\in\Sigma_K} b^{2j+1}\lambda^{k+1} L_- P_{j,k}^{-} \nonumber \\ 
& +f(Q+\lambda Z)-f(Q)-df(Q)\cdot \lambda Z.
\end{align*}
By the structures of $f$ and $P$, we have
\begin{equation*}
f(Q+\lambda Z)-f(Q)-df(Q)\cdot \lambda Z =   \sum_{j\geq 0,k\geq 1} b^{2j}\lambda^{k+1}F_{j,k}^{f,+}+  i \sum_{j\geq 0,k\geq 1} b^{2j+1}\lambda^{k+1}F_{j,k}^{f,-},
\end{equation*}
where  for $j,k\geq 0$, 
$F_{j,k}^{f,\pm}$ depends on $Q$ and on functions $P_{j',k'}^\pm$ for $(j',k')\in \Sigma_K$ such that $k'\leq k-1$ and $j'\leq j$.
Then we obtain that
\begin{multline}\label{cal:f}
\partial^2_y P - P + f(P)   =\  - \sum_{(j,k)\in\Sigma_K} b^{2j}  \lambda^{k+1} L_+ P_{j,k}^++ \sum_{j\geq 0,k\geq 1} b^{2j}\lambda^{k+1}F_{j,k}^{f,+} \\
   -i\sum_{(j,k)\in\Sigma_K} b^{2j+1}\lambda^{k+1} L_- P_{j,k}^{-}  +  i \sum_{j\geq 0,k\geq 1} b^{2j+1}\lambda^{k+1}F_{j,k}^{f,-}  
\end{multline}

\noindent {\bf Estimate of  $\lambda g(P)$}. By the definition of $g$ and $P$, we have
\begin{equation} \label{cal:g}
\lambda g(P)  = \lambda \mu \cdot \delta Q 
+ \sum_{j\geq 0,k\geq 1} b^{2j}\lambda^{k+1}F_{j,k}^{g,+}+
 i \sum_{j\geq 0,k\geq 1} b^{2j+1}\lambda^{k+1}F_{j,k}^{g,-},
\end{equation}
where  $F^{g,\pm}_{j,k}$ are defined  for $j\geq 0, k\geq 1$ by
\begin{align*}
F_{j,k}^{g, +} =  \mu \cdot  \delta P^{+}_{j, k-1},  \quad \text{and}\quad
F_{j,k}^{g,-}= \mu \cdot \delta P^{-}_{j, k-1}. 
\end{align*}

\noindent {\bf Estimate of $\theta \frac {|y|^2}4 P$}. By the definition of $\theta$ and $P$, we obtain
\begin{equation}\label{cal:theta}
\theta \frac {|y|^2}4 P  = \frac {1}4 |y|^2 Q \cdot  \sum_{(j,k)\in\Sigma_K} b^{2j}  \lambda^{k+1} \beta_{j,k}   + \sum_{j\geq 0,k\geq 1} b^{2j}\lambda^{k+1}F_{j,k}^{\theta,+}+ i \sum_{j\geq 0,k\geq 1} b^{2j+1}\lambda^{k+1}F_{j,k}^{\theta,-},
\end{equation}
where 
$F_{j,k}^{\theta,\pm}$ are defined for $j\geq 0, k\geq 1$ by 
\begin{align*}
F_{j,k}^{\theta, +} = &   \sum^{j}_{j'=0}\sum^{k-1}_{k'=0} \beta_{j',k'} \frac{|y|^2}{4} P^{+}_{j-j', k-k'-1},  \\
F_{j,k}^{\theta,-}= &  \sum^{j}_{j'=0}\sum^{k-1}_{k'=0} \beta_{j',k'} \frac{|y|^2}{4} P^{-}_{j-j', k-k'-1},
\end{align*}
which implies that $F_{j,k}^{\theta,\pm}$ only 
depends on functions $P_{j',k'}^\pm$ and parameters $\beta_{j',k'}$ for $(j',k')\in \Sigma_K$ such that $k'\leq k-1$ and $j'\leq j$.

Now by \eqref{cal:ps}, \eqref{cal:f}, \eqref{cal:g} and \eqref{cal:theta}, we obtain
\begin{align*}
\Psi_K  = & 
- \sum_{(j,k)\in\Sigma_K} b^{2j}  \lambda^{k+1} \left( L_+ P_{j,k}^+ - F_{j,k}^{+} -\frac14 \beta_{j,k} |y|^2 Q \right) \\
& - i\sum_{(j,k)\in\Sigma_K} b^{2j+1}\lambda^{k+1} \left( L_- P_{j,k}^- - F_{j,k}^{-} +((k+1) +2j) P_{j,k}^+ \right) \\
& +\Psi^{>K} +\Psi^{P_s},
\end{align*}
where
$
F_{j,k}^{\pm}=F_{j,k}^{P_s,\pm}+F_{j,k}^{f,\pm}+F_{j,k}^{g,\pm}+F_{j,k}^{\theta,\pm},
$
and
$$
\Psi^{>K}= \sum_{j,k\geq 0, \ (j,k)\not\in\Sigma_K} b^{2j}  \lambda^{k+1}  F_{j,k}^{+} 
+ i\sum_{j,k\geq 0, \ (j,k)\not\in\Sigma_K} b^{2j+1}\lambda^{k+1}  F_{j,k}^{-}.
$$
(Note that the series in the expression of $\Psi^{>K}$ contains only a finite number of terms.)
Now, for any $(j,k)\in \Sigma_K$, we want to error term $\Psi_K$ to be sufficiently small, hence  choose recursively $P_{j,k}^\pm \in \mathcal Y$  and $\beta_{j,k}$ to solve the system
\begin{equation}\label{def:Sjk} (S_{j,k})\qquad \left\{
\begin{array}{l}
 L_+ P_{j,k}^+ - F_{j,k}^{+} -\frac14 \beta_{j,k} |y|^2 Q =0, \\
  L_- P_{j,k}^- - F_{j,k}^{-} +\big((k+1) +2j\big) P_{j,k}^+ =0,
\end{array}\right.
\end{equation}
where $F_{j,k}^\pm$ are source terms depending of previously determined $P_{j',k'}^\pm$
and $\beta_{j',k'}$.
We can solve \eqref{def:Sjk} by an induction argument on $j$ and $k$.

For $(j,k)=(0,0)$,  the system $(S_{j,k})$ becomes
\begin{equation}\label{def:S00} (S_{0,0})\qquad \left\{
\begin{array}{l}
 L_+ P_{0,0}^+ - \mu \delta Q - \frac{1}{4} \beta_{0,0} |y|^2 Q =0,
\\
  L_- P_{0,0}^-   +  P_{0,0}^+=0 ,
\end{array}\right.
\end{equation}
By \eqref{eq:Lp}, for any $\beta_{0,0}\in\R$, there exists a unique $P_{0,0}^+\in \mathcal Y$ such that $$L_+ P_{0,0}^+ -\mu \delta Q  -\frac{1}{4} \beta_{0,0} |y|^2 Q =0.$$
In order to solve $P^{-}_{0,0}$ in \eqref{def:S00}, we choose $\beta_{0,0}\in\R$  such that  
\begin{equation*}
0= \psld{P_{0,0}^+}{Q}=-\frac 12\psld{L_+ P_{0,0}^+}{\Lambda Q} =-\frac12 \psld{\mu \delta Q+  \frac{1}{4}\beta_{0,0}|y|^2Q}{ \Lambda Q}
\end{equation*}
where in the second equality we use the fact that $L _+\Lambda Q = -2 Q$ in \eqref{fact:algebra}, which gives
\begin{equation}\label{def:beta}
\beta:=\beta_{0,0}=-4\mu\frac{ \psld{\delta Q}{\Lambda Q}}{\psld{|y|^2Q}{\Lambda Q}}= 2\mu \frac{ Q(0)^2}{\norm{yQ}_2^2}=4 \frac{G(Q)}{\norm{yQ}^2_2}>0.
\end{equation}
Therefore, by \eqref{eq:Ln}, there exists $P_{0,0}^-\in \mathcal Y$ (unique up to the kernel functions $cQ$ of the operator $L_{-}$) such that $$L_- P_{0,0}^-   +  P_{0,0}^+=0.$$

Now, we assume that for some $(j_0,k_0)\in\Sigma_K$, the following conclusion is true:

\noindent {$H(j_0,k_0)$} : {\sl for all $(j,k)\in\Sigma_K$ such that either
$k<k_0$, or $k=k_0$ and $j<j_0$, the system $(S_{j,k})$ has a solution $(P_{j,k}^+,P_{j,k}^-,\beta_{j,k})\in  \mathcal Y \times \mathcal Y \times \R$.} 

By the definition of $F_{j_0,k_0}^\pm$, $H(j_0,k_0)$ implies that $F_{j_0,k_0}^\pm\in\mathcal Y$. We now solve the system $(S_{j_0,k_0})$ as $(S_{0,0})$. By \eqref{eq:Lp}, for any $\beta_{j_0,k_0}\in\R$, there exists a unique $P_{j_0,k_0}^+\in \mathcal Y$ such that $$L_+ P_{j_0,k_0}^+ - F_{j_0,k_0}^{+} - \frac{1}{4}\beta_{j_0,k_0} |y|^2 Q =0.$$ In order to solve $P^{-}_{j_0, k_0}$ in \eqref{def:Sjk},  we choose $\beta_{j_0,k_0}\in\R$ such that  
\begin{equation*}
\psld{ - F_{j_0,k_0}^{-} +((k_0+1) +2j_0) P_{j_0,k_0}^+}{Q}=0,
\end{equation*}
which together \eqref{fact:algebra} implies that 
\begin{equation*}
 \beta_{j_0, k_0}= \frac{1}{\norm{yQ}^2_2 } \left( \big(F^{+}_{j_0, k_0}, \Lambda Q\big)_2 + 2\frac{\big(F^{-}_{j_0, k_0}, Q\big)_2}{(k_0+1)+2j_0} \right) \in \R.
\end{equation*}
By \eqref{eq:Ln}, there exists $P_{j_0,k_0}^-\in \mathcal Y$ (unique up to the kernel functions $cQ$ of the operator $L_{-}$) such that $$L_- P_{j_0,k_0}^-   - F_{j_0,k_0}^{-} +((k_0+1) +2j_0) P_{j_0,k_0}^+=0.$$
In particular, we have proved that
if $j_0<K$, then $H(j_0,k_0)$ implies $H(j_0+1,k_0)$, and $H(K,k_0)$ implies $H(0,k_0+1)$.

The induction argument on $(j,k)$ can solve \eqref{def:Sjk} in $ \mathcal Y \times \mathcal Y \times \R$ for $(j,k)\in \Sigma_K$.
Therefore, we can obtain the refined blow-up profile $P(s,y)$ according to the definition in \eqref{def:P}.

It remains to estimate $\Psi_K$ and $\partial_y \Psi_K$.
It is easy to check that
\begin{align*}
& \sup_{y\in \R} \left(e^{\frac{|y|}{2}}\left( |\Psi^{P_s}(y)|+  |\partial_y\Psi^{P_s}(y)|\right)\right) \lesssim  \lambda \left(\left| \frac{\lambda_s}{\lambda}+b\right|+
\left|b_s+b^2-\theta\right|\right),
\end{align*}
and
\begin{equation*}
 \sup_{y\in \R}  \left(e^{\frac{|y|}{2}}\left(|\Psi^{>K}|+|\partial_y \Psi^{>K}| \right) \right)\lesssim \left( |b|^{2(K+1)}\lambda+ \lambda^{K+2} \right)\lesssim \left( |b|^{2}+ \lambda\right)^{K+2}.
\end{equation*}
This completes the proof of $(i)$.

\noindent{\bf Step 2: } By the definition of $P_b$ in \eqref{def:Pb}, we have
\begin{equation*}
\partial_s P =\left(\partial_s P_b + i b_s\frac{|y|^2}{4} P_b \right)e^{ib\frac{|y|^2}{4}}, 
\end{equation*}
\begin{equation*}
\partial^2_y P = \left( \partial^2_y P_b + ib\Lambda P_b -b^2\frac{|y|^2}{4} P_b
 \right)e^{ib\frac{|y|^2}{4}}.
\end{equation*}
By the property of the Dirac delta operator, we have
$$\mu \cdot \delta P(s)=g(P) =g(P_b) \cdot e^{i\frac{|y|^2}{4}}=\mu\cdot  \delta P_b(s) \cdot e^{i\frac{|y|^2}{4}}.$$
Inserting these equalities into \eqref{eq:P}, we can obtain \eqref{eq:Pb}.

\noindent{\bf Step 3: } To prove \eqref{est:Pblamb dmass}, we use  \eqref{def:Pblamb} and multiply \eqref{eq:Pb} with $iP_b$ to get
\begin{equation*}
\frac12 \frac{d}{ds}\|P_{b,\lambda, \gamma}\|_{2}^2 = \frac12 \frac{d}{ds}\|P_b\|_{2}^2=(i\partial_sP_b,iP_b)_2=(\Psi_K e^{-i\frac{b|y|^2}{4}},iP_b)_2,
\end{equation*}
where  in the last equality we use the identity $(P_b,\Lambda P_b)_2=0$. Therefore, \eqref{est:Pblamb dmass} follows from \eqref{est:error}. 

To prove \eqref{est:Pblamb denergy}, we obtain from scaling and \eqref{delta:scal} that
\begin{equation*}
 E(P_{b,\lambda,\gamma})=\frac{1}{\lambda^2}\left(\frac 12\int_{\R}|\partial_y P_b|^2\, dy-\int_{\R} F(P_b)\, dy-\lambda \int_{\R} G(P_b)\, dy\right)=:\frac{1}{\lambda^2}\tilde E(\lambda, P_b)
\end{equation*}
Therefore, we have
\begin{equation}\label{eq:Pblamb:cal1}
\frac{d}{ds} E(P_{b,\lambda,\gamma})=\frac{1}{\lambda^2}\left(\dual{\tilde E'(\lambda,P_b)}{\partial_sP_b}-2\frac{\lambda_s}{\lambda}\tilde E(\lambda,P_b)- \lambda \frac{\lambda_s}{\lambda} \int_{\R} G(P_b)\, dy \right).
\end{equation}
By \eqref{eq:Pb} and the fact that
\begin{equation*}
\dual{\tilde E'(\lambda, P_b)}{i\left(\partial^2_y P_b - P_b + f(P_b) + \lambda g(P_b)\right)} = 0,
\end{equation*}
 we have
\begin{multline}\label{eq:Pblamb:cal2}
\dual{\tilde E'(\lambda,P_b)}{\partial_sP_b}
=\frac{\lambda_s}{\lambda}\dual{\tilde E'(\lambda,P_b)}{\Lambda P_b}-\left(\frac{\lambda_s}{\lambda}+b\right)\dual{\tilde E'(\lambda,P_b)}{\Lambda P_b}
\\+(b_s+b^2-\theta)\dual{i\tilde E'(\lambda,P_b)}{\frac{|y|^2}{4}P_b}
+\dual{i\tilde E'(\lambda,P_b)}{\Psi_Ke^{-i\frac{b|y|^2}{4}}}.
\end{multline}
By integration by parts, we obtain
\begin{align}
\dual{\tilde E'(\lambda,P_b)}{\Lambda P_b}=&\, \int_{\R} |\partial_y P_b|^2\,dy-2\int_{\R} F(P_b)\,dy-\lambda \int_{\R} G(P_b)\,dy \nonumber \\
=&\, 2\tilde E(\lambda,P_b)+\lambda\int_{\R} G(P_b)\,dy,\label{eq:Pblamb:cal3}
\end{align}
by \eqref{eq:Pblamb:cal1}, \eqref{eq:Pblamb:cal2}\eqref{eq:Pblamb:cal3}, and \eqref{est:error}, we have
\begin{align*}
\frac{d}{ds} E(P_{b,\lambda,\gamma})= 
&\ \frac{1}{\lambda^2}\left[\frac{\lambda_s}{\lambda} \left(2\tilde E(\lambda,P_b)+\lambda\int_{\R} G(P_b)\,dy\right)-2\frac{\lambda_s}{\lambda}\tilde E(\lambda,P_b)-\lambda \frac{\lambda_s}{\lambda} \int_{\R} G(P_b)\,dy \right]\\
&\ +  \frac{1}{\lambda^2}O\left(\left| \frac{\lambda_s}{\lambda} +b\right|+|b_s+b^2-\theta|+(b^2+\lambda)^{K+2}\right)\\
= & \ \frac{1}{\lambda^2}O\left(\left| \frac{\lambda_s}{\lambda} +b\right|+|b_s+b^2-\theta|+(b^2+\lambda)^{K+2}\right),
\end{align*}
which implies \eqref{est:Pblamb denergy}. 

To show \eqref{est:Pblamb energy}. By \eqref{def:Pb}, we compute $E(P_{b,\lambda, \gamma})$  as follows
\begin{align*}
\lambda^2 E(P_{b,\lambda,\gamma})&=\frac 12\int_{\R}|\partial_y P_b|^2-\int_{\R} F(P_b)-\lambda \int_{\R} G(P_b)\\
&=\frac 12\int_{\R} |\partial_y P|^2\,dy-\int_{\R} F(P)\,dy+\frac{1}{8}b^2\int_{\R}|y|^2|P|^2\,dy-\lambda \int_{\R} G(P)\,dy.
\end{align*}
Thus, by replacing $P=Q+\lambda Z$, we have
\begin{align}
 \lambda^2 E(P_{b,\lambda,\gamma})
=\ & \frac 12\int_{\R} |\partial_y Q|^2\,dy-\int_{\R} F(Q)\,dy+\frac{1}{8}b^2\int_{\R}|y|^2Q^2\,dy-\lambda \int_{\R} G(Q)\,dy \nonumber \\
& +\lambda \int_{\R} \big(-\partial^2_y Q-f(Q)\big)\cdot \Re Z\,dy-\lambda^{2}\int_{\R} g(Q) \cdot \Re Z\,dy+\frac{1}{4}b^2 \lambda\int_{\R}|y|^2Q \cdot \Re Z\,dy \nonumber \\
&+\frac{1}{2}\lambda^{2} \int_{\R}|\partial_y Z|^2\,dy+\frac{1}{8}b^2\lambda^{2}\int_{\R}|y|^2|Z|^2\,dy\nonumber \\
& 
-\int_{\R}\Big(F(Q+\lambda Z)-F(Q)-\lambda f(Q) \cdot \Re Z\Big)\,dy\nonumber \\
&-\lambda\int_{\R}\Big(G(Q+\lambda Z)-G(Q)-\lambda g(Q) \cdot \Re Z\Big)\,dy. \label{eq:Pblamb:cal4}
\end{align}

On the one hand, by the Pohozaev identity, we have
\begin{equation}\label{est:Z0}
\frac 12\int_{\R} |\partial_y Q|^2\,dy-\int_{\R} F(Q)\,dy=0,
\end{equation}
and by the definition \eqref{def:beta} of $\beta_{0,0}$, we have 
\begin{equation}\label{est:Z1}
\lambda \int_{\R} G(Q)\,dy=\lambda \frac{\beta}{4}\int_{\R} |y|^2Q^2\,dy=\frac {1}{8} \cdot 2\beta\lambda \int_{\R}|y|^2Q^2\,dy.
\end{equation}

On the other hand,  by the facts that  
\begin{equation*}
\partial^2_y Q+f(Q)=Q,\quad\text{and}\quad  \int_{\R} Q \cdot P_{0,0}^+ \,dy=0,
\end{equation*}
we have
\begin{equation}\label{est:Z2f}
\lambda \int_{\R}\big(-\partial^2_y Q-f(Q)\big)\cdot \Re Z\,dy =  -
\lambda \int_{\R} Q \cdot \Re Z\,dy= \lambda \sum_{(j,k)\in \Sigma_K,j+k\geq 1} b^{2j}\lambda^{k}\eta_{j,k}^{\rm I},
\end{equation}
for some $\eta_{j,k}^{\rm I}\in \R$, and
\begin{equation}\label{est:Z2g}
\lambda^{2}\int_{\R} g(Q) \cdot  \Re Z\,dy=\lambda \sum_{(j,k)\in \Sigma_K,k\geq 1}b^{2j}\lambda^{k}\eta_{j,k}^{\rm II},
\end{equation}
for some $\eta_{j,k}^{\rm II}\in \R$, and
\begin{equation}\label{est:Z2conf} 
b^2 \lambda\int_{\R} |y|^2Q \cdot \Re Z\,dy=\lambda \sum_{(j,k)\in \Sigma_K, j\geq 1}b^{2j}\lambda^{k}\eta_{j,k}^{\rm III},
\end{equation}
for some $\eta_{j,k}^{\rm III}\in \R$, and
\begin{equation}\label{est:Zhconf}
\frac{1}{2}\lambda^{2}\int_{\R} |\partial_y Z|^2\,dy+\frac{1}{8}b^2\lambda^{2}\int_{\R} |y|^2Z^2\,dy
=\lambda \sum_{(j,k)\in \Sigma_K,j+k\geq 1}b^{2j}\lambda^{k}\eta_{j,k}^{\rm IV},
\end{equation}
for some $\eta_{j,k}^{\rm IV}\in \R$.
Moreover, by Taylor expansion as before, for some $\eta_{j,k}^{\rm V},\eta_{j,k}^{\rm VI}\in \R$
\begin{align}\label{est:ZhF}
 \left|\int_{\R}\Big(F(Q+\lambda Z)-F(Q)-\lambda f(Q)\cdot \Re Z\Big) \,dy 
-\lambda\sum_{(j,k)\in \Sigma_K, k\geq 1}b^{2j}\lambda^{k}\eta_{j,k}^{\rm V} \right| 
 \lesssim \left(b^2+\lambda\right)^{K+2},
 \end{align}
 and
\begin{align}\label{est:ZhG}
  \left|\lambda\int_{\R}\Big(G(Q+\lambda Z)-G(Q)-\lambda g(Q)\cdot \Re Z\Big) \,dy 
-\lambda\sum_{(j,k)\in \Sigma_K,k\geq 1}b^{2j}\lambda^{k}\eta_{j,k}^{\rm VI} \right| 
  \lesssim \left(b^2+\lambda\right)^{K+2}.
 \end{align}
  
By inserting \eqref{est:Z0} - \eqref{est:ZhG} into \eqref{eq:Pblamb:cal4}, we can prove \eqref{est:Pblamb energy} and complete the proofs of Proposition \ref{prop:profile}.
\end{proof}

\subsection{Approximate blow-up law}\label{subsect:blp}
As shown in Proposition \ref{prop:profile}, $P(s,y)$ can be viewed as the refined blow-up profile of \eqref{eq:dnls} when  $b(s)$ and $\lambda(s)$ satisfy the smallness condition $|b|+\lambda \ll 1$ and 
 the  blow-up law 
\begin{equation*}
\frac{\lambda_s}{\lambda} + b \approx 0, \quad b_s+b^2-\theta \approx 0
\end{equation*} 
where $\displaystyle  \theta= \sum_{(j,k)\in\Sigma_K} b^{2j}\lambda^{k+1} \beta_{j,k}.$  We now look for a  solution to  the following approximate  system
\begin{equation}\label{eq:ode appsys}
\quad
\frac{\lambda_s}{\lambda} + b=0, \quad b_s+b^2-\beta\lambda=0,  
\end{equation}
where $\beta=\beta_{0,0}= 2\mu \frac{ Q(0)^2}{\norm{yQ}_2^2}>0$ in this subsection. Indeed, for $|b|+\lambda \ll 1$,  the first term $\beta\lambda$  in $\theta$ is the main term in $\theta$, and the only term in $\theta$ that will modify the blow-up speed.

\begin{lemma}\label{lem:sol:ode app}
Let
\begin{equation}\label{sol:ode app}
\lambda_{\rm app}(s)=2/(\beta s^2), 
\quad b_{\rm app}(s)= 2/s,
\end{equation}
then $(\lambda_{\rm app}(s),b_{\rm app}(s))$ solves \eqref{eq:ode appsys} for $s>0$.
\end{lemma}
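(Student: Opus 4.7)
The plan is simply a direct verification: compute the two sides of each equation in the system \eqref{eq:ode appsys} with $(\lambda,b)=(\lambda_{\rm app},b_{\rm app})$ and check that they agree. Since the ansatz is explicit and polynomial in $1/s$, no auxiliary machinery is needed.

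First I would compute the logarithmic derivative of $\lambda_{\rm app}$. Differentiating $\lambda_{\rm app}(s)=2/(\beta s^2)$ gives $\lambda_{\rm app,s}=-4/(\beta s^3)$, hence
\begin{equation*}
\frac{\lambda_{\rm app,s}}{\lambda_{\rm app}}=\frac{-4/(\beta s^3)}{2/(\beta s^2)}=-\frac{2}{s}=-b_{\rm app}(s),
\end{equation*}
which is exactly the first equation in \eqref{eq:ode appsys}.

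Next I would verify the second equation. From $b_{\rm app}(s)=2/s$ we get $b_{\rm app,s}=-2/s^{2}$ and $b_{\rm app}^{2}=4/s^{2}$, while $\beta\lambda_{\rm app}(s)=2/s^{2}$. Therefore
\begin{equation*}
b_{\rm app,s}+b_{\rm app}^{2}-\beta\lambda_{\rm app}=-\frac{2}{s^{2}}+\frac{4}{s^{2}}-\frac{2}{s^{2}}=0.
\end{equation*}
Both identities hold for every $s>0$, which proves the lemma.

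There is no real obstacle here; the only point worth flagging is the bookkeeping reason for the precise coefficients $2$ and $2/\beta$: they are dictated by requiring $b_{\rm app}^{2}-\beta\lambda_{\rm app}$ to cancel $b_{\rm app,s}$, which forces $b_{\rm app}=2/s$ (matching the Bernoulli-type equation $b_s+b^{2}\approx 0$ modulo the $\beta\lambda$ correction), and then the first relation fixes $\lambda_{\rm app}$ up to the multiplicative constant $2/\beta$. This is precisely the modified blow-up law anticipated in Remark \ref{rem:blp rate} and is the source of the $|t|^{-2/3}$ rate in Theorem \ref{thm:thresh2}.
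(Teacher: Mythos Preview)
Your direct verification is correct and complete. The paper takes a slightly different route: rather than plugging in and checking, it first derives from \eqref{eq:ode appsys} the conserved quantity $\dfrac{b^2}{\lambda^2}-\dfrac{2\beta}{\lambda}=c_0$, then sets $c_0=0$ and integrates $-\lambda_s/\lambda^{3/2}=\sqrt{2\beta}$ to \emph{discover} the explicit solution. Your approach is the cleaner way to prove the lemma as stated; the paper's approach has the advantage that the conservation law \eqref{const} it uncovers is reused later (with $c_0=C_0\neq 0$) to define the function $\mathcal F(\lambda)$ in \eqref{def:F} and to close the bootstrap on $\lambda$ and $b$ in Proposition~\ref{prop:bootstrap}. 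So the paper's proof doubles as a derivation that feeds into subsequent arguments, while yours is a self-contained verification.
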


\begin{proof}
By simple computation, we have
$$
\left(\frac{b^2}{\lambda^2}\right)_s=2\frac{b}{\lambda^2}\left(b_s -\frac{\lambda_s}{\lambda}b \right)=2\frac{b}{\lambda}\frac{b_s+b^2}{\lambda}
=2\beta\frac{b}{\lambda}=-\frac{2\beta}{\lambda} \frac{\lambda_s}{\lambda} =\left(\frac{2\beta}{\lambda}\right)_s
,$$ 
and so
\begin{equation}\label{const}
\frac{b^2}{\lambda^2}-\frac{2\beta}{\lambda}=c_0.
\end{equation}
Taking $c_0=0$, and using $b=-\frac{\lambda_s}{\lambda}>0$, we obtain
$$
-\frac{\lambda_s}{\lambda^{1+\frac{1}{2}}}=\sqrt{2\beta}.
$$
This implies that
\begin{equation*}
 \lambda(s)=\frac{2}{\beta}s^{-2},
\quad b(s)=2s^{-1} 
\end{equation*}
is solution of \eqref{eq:ode appsys} and completes the proof.
\end{proof}

\begin{remark}\label{rem:tapp} We now give some remarks on the approximate system \eqref{eq:ode appsys}.
\begin{enumerate}
\item After we obtain the  solution $\big(\lambda_{\rm app}(s),\ b_{\rm app}(s)\big)$ to the  approximate system \eqref{eq:ode appsys} in the time rescaled variable $s$ in Lemma \ref{lem:sol:ode app}, 
we now express this solution in the  time variable $t_{\rm app}$ related to $\lambda_{\rm app}$. Let $t_{app}(s)<0$ and
\begin{equation*}
 {dt_{\rm app}}= {\lambda_{\rm app}^2} ds=\frac{4}{\beta^2} s^{-4}ds,
\end{equation*}
then we have
\begin{equation}
\label{def:tapp}
t_{\rm app}(s) =- C_s s^{-3} \quad \hbox{where} \quad C_s = 4/(3\beta^2), 
\end{equation}
where we use the convention that $t_{\rm app}(s)\to 0^-$ as $s\to +\infty$. As a consequence, we obtain  
\begin{equation}\label{def:lapp}
\lambda_{\rm app}(t_{\rm app})=\lambda_{app}(t_{app}(s))=\frac{2}{\beta}s^{-2}=C_\lambda |t_{\rm app}|^{\frac{2}{3}} \quad \hbox{where} \quad 
C_\lambda  =\frac{2}{\beta} C_s^{-2/3},
\end{equation} 
\begin{equation}\label{def:bapp}
b_{\rm app}(t_{\rm app})=b_{app}(t_{app}(s))=2s^{-1}=C_b|t_{\rm app}|^{\frac{1}{3}}, \quad
\hbox{where} \quad C_b = 2 C_s^{-1/3}.
\end{equation}
\item  For the approximate system \eqref{eq:ode appsys}, we show the $(\lambda, b)$ flows driven by the vector fields $(-\lambda b, -b^2 + \beta \lambda)$ with different $\beta\in \R$ as those in Figure \ref{figure}. In case (a), we denot the curve $\beta \lambda = b^2$ by the blue dot curve. From these pictures, we can obtain the heuristic that there exist finite time blow-up solutions (corresponding to $\lambda(s) \to 0+$ as $s\to \infty$) only for case (a) $\beta>0$ (the attractive delta potential case) and case (c) $\beta=0$ (the $L^2$-critical NLS).

\begin{figure}[htbp]
\centering
\subfigure[$\beta>0$]{
\begin{minipage}[t]{0.45\linewidth}	
\centering
\includegraphics[width=8cm]{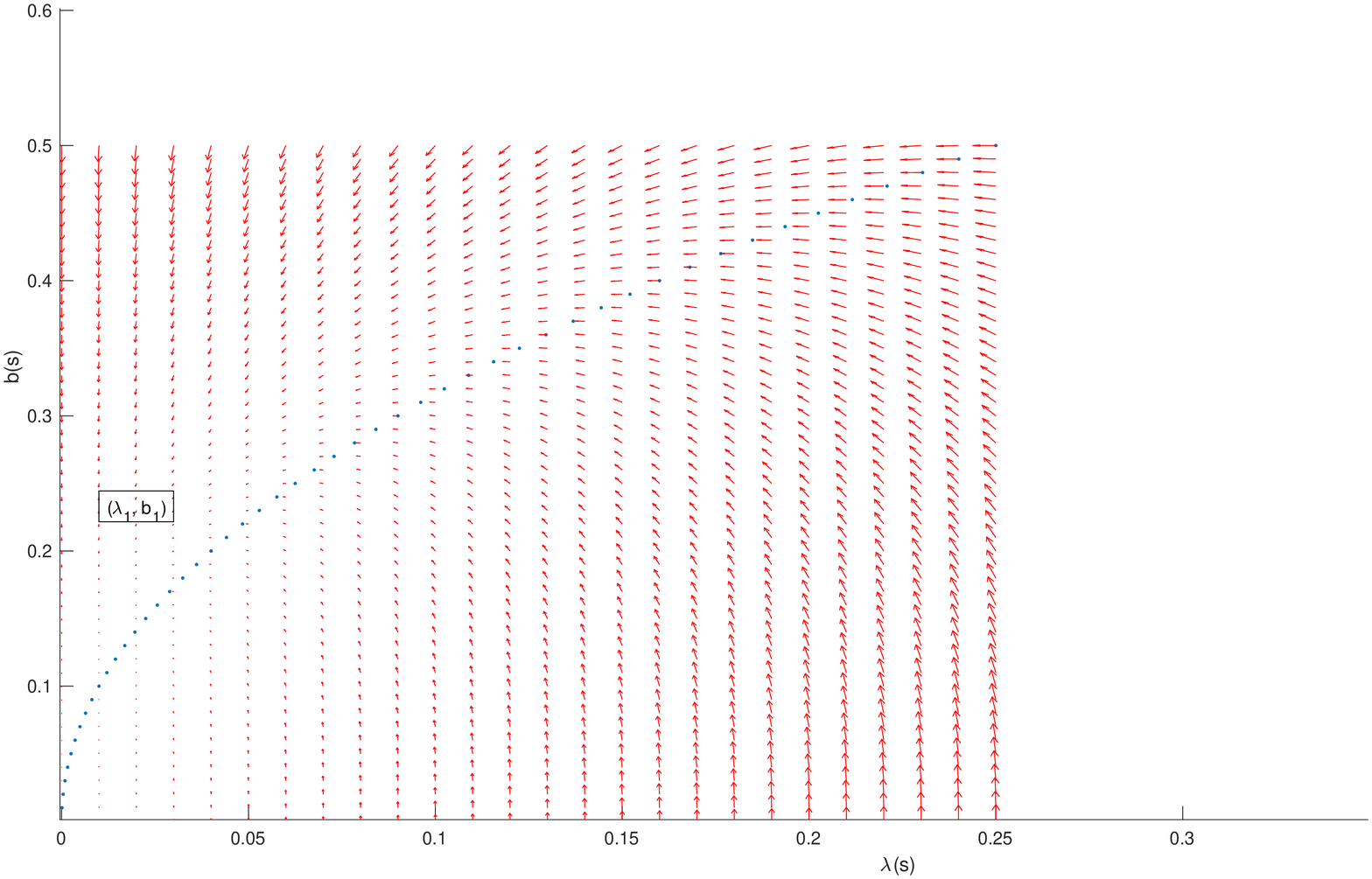}
\end{minipage}
}
\subfigure[$\beta<0$]{
	\begin{minipage}[t]{0.45\linewidth}
		\centering
		\includegraphics[width=8cm]{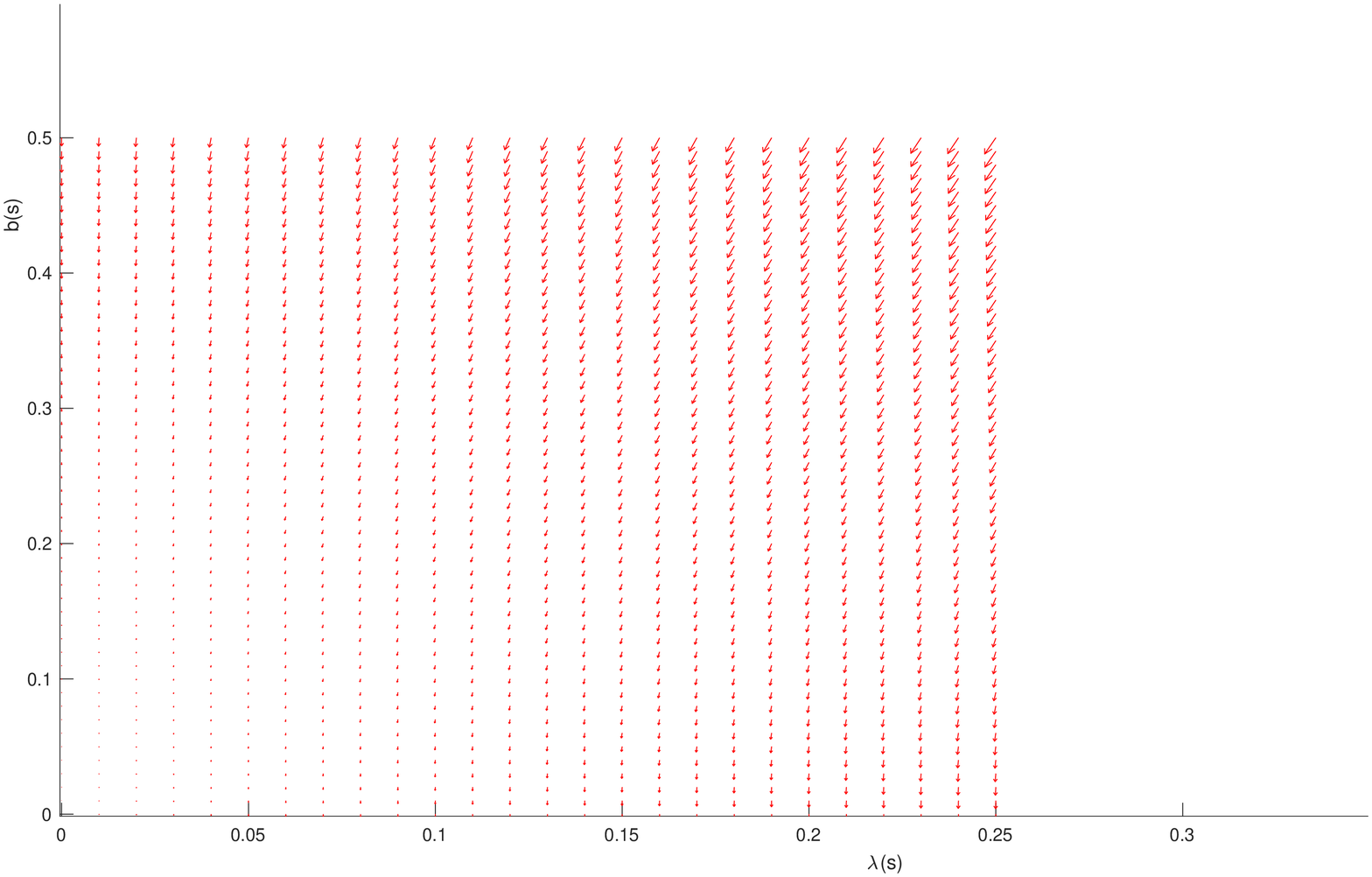}
	\end{minipage}
}

\subfigure[$\beta=0$]{
	\begin{minipage}[t]{0.45\linewidth}
		\centering
		\includegraphics[width=8cm]{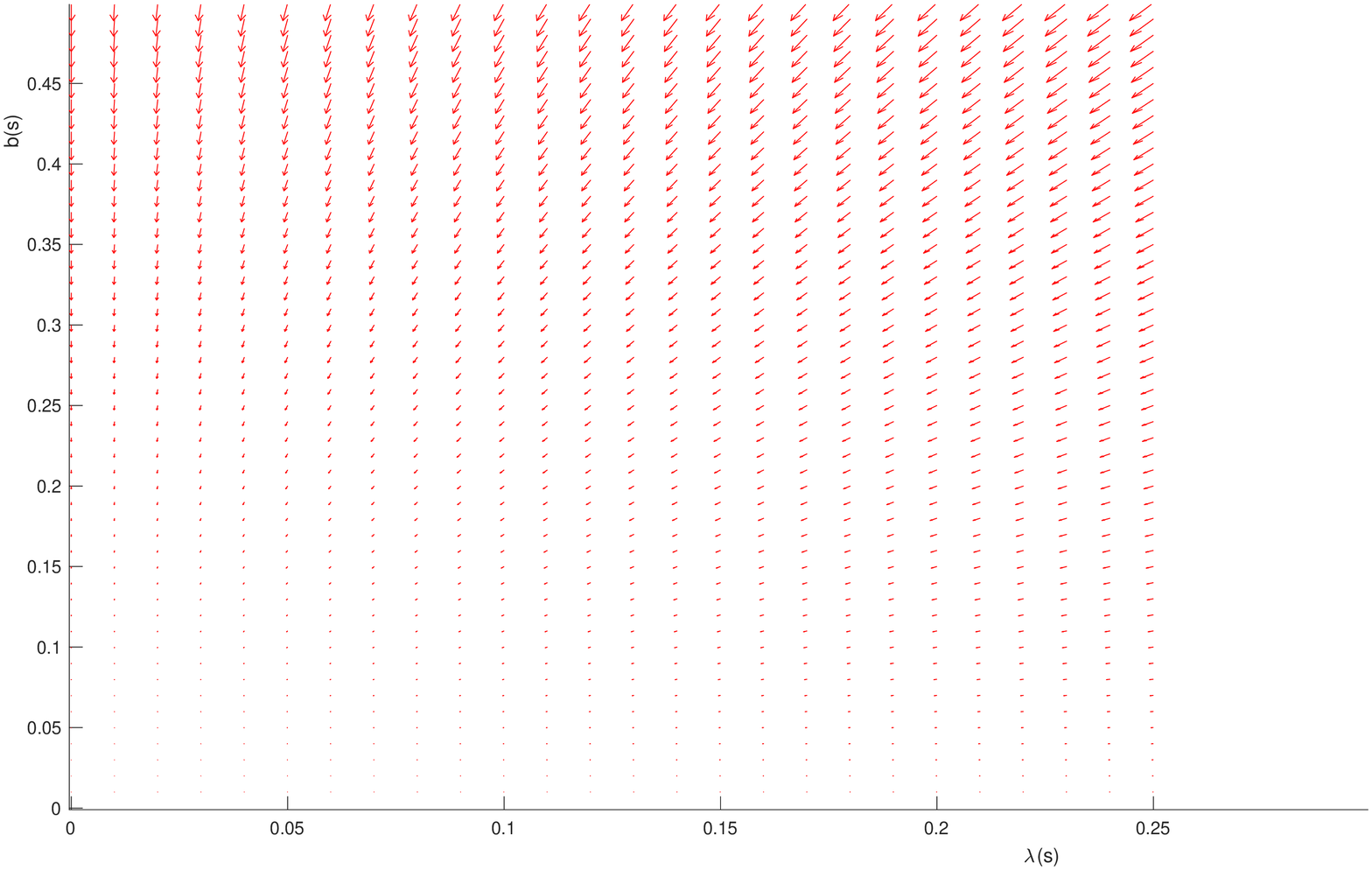}
	\end{minipage}
}
\centering
\caption{$(\lambda, b)$ flows driven by the vector fields $(-\lambda b, -b^2 + \beta \lambda)$ with different  $\beta\in \R$. }
\label{figure}
\end{figure}
\end{enumerate}
\end{remark}

In order to adjust the value of the energy of $P_{b,\lambda,\gamma}$  in \eqref{est:Pblamb energy} up to small error, and to close the bootstrap argument for $(\lambda, b)$  at the end of the proof of Proposition \ref{prop:unif est}, we will choose suitable final conditions $\lambda_1$ and $b_1$ of $(\lambda, b)$ at sufficiently large rescaled time $s_1$ as the base case (see \eqref{eq:final data} and \eqref{est:base case}). Let $E_0\in \R$ and
\begin{equation*}
C_0=\frac{8E_0}{\displaystyle \int_{\R} |y|^2Q^2\, dy}.
\end{equation*}
Fix $0<\lambda_0\ll 1$  such that\footnote{We can take any $E_0\in \R$ because of the fact that $\beta>0$ for $\mu>0$ here.} $2\beta  + C_0 \lambda_0 >0$,  and for $\lambda\in (0,\lambda_0]$,  we define the auxiliary function
\begin{equation}\label{def:F}
\mathcal F(\lambda)=\int_\lambda^{\lambda_0}\frac{d\tau}{\tau^{\frac{1}{2}+1}\sqrt{2\beta+C_0\tau}} \quad \Longrightarrow \quad  \frac{d}{d\lambda} \mathcal F(\lambda) = -  \frac{1}{\lambda^{\frac{1}{2}+1}\sqrt{2\beta+C_0 \lambda}},
\end{equation}
where $\mathcal F$ is related to the resolution of $\lambda$ and $b$ for  the system \eqref{const}
with $c_0=C_0$. (See \eqref{est:b} and \eqref{est:dsF} in  the proof of Proposition \ref{prop:bootstrap} in Subsection \ref{sect:boot pf} for more details.)

\begin{lemma}\label{lem:initial conds}
Let $s_1\gg1$, then there exist sufficiently small  $\lambda_1>0$ and $b_1$ such that
\begin{align}
\label{est:lamb b:ini}
& \left|\frac {\lambda_1^{\frac 1 2}}{\lambda_{\rm app}^{\frac 1 2}(s_1)} - 1 \right|
+ \left|\frac{b_1}{b_{\rm app}(s_1)}-1 \right|\lesssim s_1^{-1},
\\& 
\mathcal{F}(\lambda_1)=s_1,\quad 
\mathcal{E}(b_1,\lambda_1)=C_0.\label{est:energy:ini}
\end{align}
\end{lemma}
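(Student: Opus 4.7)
The plan is to build $\lambda_1$ and $b_1$ in that order, directly from the two scalar equations in \eqref{est:energy:ini}, and then verify the asymptotic comparisons \eqref{est:lamb b:ini} a posteriori. First, observe from \eqref{def:F} that the map $\mathcal F:(0,\lambda_0]\to[0,+\infty)$ is strictly decreasing (its derivative equals $-1/\bigl(\tau^{3/2}\sqrt{2\beta+C_0\tau}\bigr)<0$), with $\mathcal F(\lambda_0)=0$ and $\mathcal F(\lambda)\to+\infty$ as $\lambda\to 0^+$ since the integrand is comparable to $\tau^{-3/2}/\sqrt{2\beta}$ near the origin. Hence for any $s_1\gg1$ there is a unique $\lambda_1\in(0,\lambda_0)$ with $\mathcal F(\lambda_1)=s_1$, and $\lambda_1\to 0^+$ as $s_1\to+\infty$.

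To compare $\lambda_1$ with $\lambda_{\rm app}(s_1)$, I would expand $(2\beta+C_0\tau)^{-1/2}=(2\beta)^{-1/2}+O(\tau)$ uniformly on $[0,\lambda_0]$ and integrate term by term to obtain
\begin{equation*}
\mathcal F(\lambda) = \frac{2}{\sqrt{2\beta}}\,\lambda^{-1/2} + O(1), \qquad \lambda\to 0^+.
\end{equation*}
Imposing $\mathcal F(\lambda_1)=s_1$ then yields $\lambda_1^{-1/2} = \frac{\sqrt{2\beta}}{2}\,s_1 + O(1)$, equivalently $\lambda_1^{1/2}=\frac{2}{\sqrt{2\beta}}\,s_1^{-1}\bigl(1+O(s_1^{-1})\bigr)$. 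Since $\lambda_{\rm app}(s_1)^{1/2}=\frac{2}{\sqrt{2\beta}}\,s_1^{-1}$ by \eqref{sol:ode app}, this gives the $\lambda$-half of \eqref{est:lamb b:ini}. Next, with $\lambda_1$ in hand, I would solve $\mathcal E(b_1,\lambda_1)=C_0$ for $b_1$. After multiplying by $\lambda_1^2$ and using \eqref{def:Eblamb}, this condition reads
\begin{equation*}
b_1^2 = 2\beta\lambda_1 + C_0\lambda_1^2 - \lambda_1\!\!\sum_{\substack{(j,k)\in\Sigma_K\\ j+k\geq 1}} b_1^{2j}\lambda_1^k\,\eta_{j,k},
\end{equation*}
whose leading term on the right is $2\beta\lambda_1>0$ (recall $\beta>0$ since $\mu>0$), and whose right-hand side depends on $b_1^2$ only through a polynomial perturbation of size $O(\lambda_1)$ near $b_1^2=2\beta\lambda_1$. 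A contraction-mapping argument in the variable $b_1^2$ (or the implicit function theorem applied to $\Phi(b,\lambda)=\mathcal E(b,\lambda)-C_0$ at $(b,\lambda)=(\sqrt{2\beta\lambda_1},\lambda_1)$, noting that $\partial_b\mathcal E=2b/\lambda^2+\cdots\neq 0$ for $b>0$) then yields a unique positive $b_1$ close to $\sqrt{2\beta\lambda_1}$, satisfying $b_1^2=2\beta\lambda_1+O(\lambda_1^2)$.

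Finally, I would compare $b_1$ with $b_{\rm app}(s_1)$. Using $b_{\rm app}^2(s_1)=2\beta\lambda_{\rm app}(s_1)$ from \eqref{sol:ode app},
\begin{equation*}
\frac{b_1^2}{b_{\rm app}^2(s_1)} = \frac{\lambda_1}{\lambda_{\rm app}(s_1)}\bigl(1+O(\lambda_1)\bigr) = 1 + O(s_1^{-1}),
\end{equation*}
and taking square roots completes the proof of \eqref{est:lamb b:ini}, the relations in \eqref{est:energy:ini} being built into the construction. I do not foresee any serious obstacle: the only point requiring care is the bookkeeping of remainders, so that the error in the expansion of $\mathcal F$ near $0$ and the polynomial perturbation in $\mathcal E$ each contribute only $O(s_1^{-1})$, which follows from smoothness of $\tau\mapsto(2\beta+C_0\tau)^{-1/2}$ on $[0,\lambda_0]$ and from the fact that each monomial $b_1^{2j}\lambda_1^k\sim\lambda_1^{j+k}$ with $j+k\geq 1$ is subdominant to the leading $2\beta\lambda_1$.
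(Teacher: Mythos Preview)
Your proposal is correct and follows essentially the same route as the paper's own proof: construct $\lambda_1$ from the monotonicity and boundary behavior of $\mathcal F$, expand $\mathcal F(\lambda)=\frac{2}{\sqrt{2\beta}}\lambda^{-1/2}+O(1)$ to compare $\lambda_1$ with $\lambda_{\rm app}(s_1)$, then solve $\mathcal E(b_1,\lambda_1)=C_0$ for $b_1$ via the implicit function theorem and compare with $b_{\rm app}(s_1)$ using $b_{\rm app}^2=2\beta\lambda_{\rm app}$. The only cosmetic difference is that the paper centers the implicit function theorem at $b_{\rm app}(s_1)$ rather than at $\sqrt{2\beta\lambda_1}$, but since $\sqrt{2\beta\lambda_1}=b_{\rm app}(s_1)(1+O(s_1^{-1}))$ these are equivalent anchor points and yield the same $O(s_1^{-1})$ estimate.
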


\begin{proof}
For sufficiently large $s_1$,  we firstly choose $\lambda_1$. By \eqref{def:F}, $\mathcal F$ is a decreasing function of $\lambda$ satisfying
$\mathcal F(\lambda_0)=0$ and $\displaystyle \lim_{\lambda \to 0} \mathcal F(\lambda)=+\infty$.
Thus there exists a unique $\lambda_1\in (0,\lambda_0)$ such that
$\mathcal F(\lambda_1)=s_1$.

For $\lambda\in (0,\lambda_0]$, we have
\begin{align}\label{est:F}
\left|\mathcal F(\lambda)-\frac{2}{\sqrt{2\beta} \lambda^{\frac{1}{2}}} \right| \lesssim 1+\left|\int_\lambda^{\lambda_0}\frac{d\tau}{\tau^{\frac{1}{2}+1}}\left[\frac{1}{\sqrt{2\beta +C_0\tau}}-\frac{1}{\sqrt{2\beta}}\right]\right|
\lesssim 1.
\end{align}
If taking $\lambda=\lambda_1$, we obtain from $\mathcal F(\lambda_1)=s_1$ and $\lambda_{\rm app}(s)=2/(\beta s^2)$ that
$$
\left|s_1 - \frac{2}{\sqrt{2\beta} \lambda_1^{\frac{1}{2}}} \right|
\lesssim 1 \quad \Longleftrightarrow \quad
\left|\frac {\lambda_1^{\frac 12}}{\lambda_{\rm app}^{\frac 12}(s_1)} - 1 \right|
\lesssim s_1^{-1}.
$$

Secondly, we can choose  $b_1$ sufficiently small.
From the definition of $\mathcal{E}(b, \lambda)$ and $\lambda_{\rm app}(s) = 2/(\beta s^2)$, we define the function $h(b)$ as follows
\begin{align*}
h(b):=\lambda_1^2 \mathcal{E}(b,\lambda_1)
& =b^2-\left(\frac{2}{ s_1}\right)^2 - 2\beta \left(\lambda_1
-\lambda_{\rm app}(s_1)\right)
+\lambda_1 \sum_{(j,k)\in\Sigma_K,\  j+k\geq 1} b^{2j}\lambda_1^{k}  \eta_{j,k}\\
& =\big(1+O(s^{-2}_1)\big)b^2-\left(\frac{2}{ s_1}\right)^2  +O(s_1^{- 3}),
\end{align*}
where $b$ is close to $b_{\rm app}(s_1)$. By $b_{\rm app}(s)=2/s$, we have 
$$
|h(b_{\rm app}(s_1))|\lesssim  s_1^{-3},\quad |h'(b_{\rm app}(s_1))| \geq 2b_{\rm app}(s_1)+O(s_1^{-3})
\geq {s_1^{-1}}.
$$
Since $C_0 \lambda_1^2\approx s_1^{- 4}$, it follows from the implicit function theorem that
 there exists a unique $b_1$ such that
$$
 h(b_1)=C_0 \lambda_1^2, \quad \text{where}\quad 
|b_1-b_{\rm app}(s_1)|\lesssim s^{-3}_1/ s^{-1}_1 \lesssim    s_1^{-2},$$
which implies 
$
\mathcal{E}(b_1,\lambda_1)= h(b_1)/\lambda^2_1= C_0,
$ and completes the proof.
\end{proof}

\section{Uniform estimates in the rescaled time variable}\label{Sect3: uniform ests}
After the construction of the refined blow-up profile in Proposition \ref{prop:profile} and final data setup on modulation parameters $\lambda, b$ at the rescaled time $s=s_1$ in Lemma \ref{lem:initial conds},  we will show the uniform backwards estimates of $(\lambda, b, \varepsilon)$ for specific solutions of \eqref{eq:dnls} on $[s_0, s_1]$ in Proposition \ref{prop:unif est}, where $s_0$ is sufficiently large, but independent of $s_1$. These uniform backwards estimates will play key role to construct minimal mass blow-up solution of \eqref{eq:dnls} by the standard compact argument in next section. 

Let $P$, $P_b$ be defined by \eqref{def:P}, \eqref{def:Pb} in Proposition \ref{prop:profile}, $\rho$ be given by \eqref{fact:algebra} and define 
\begin{equation}\label{def:rhob}
\rho_b(s,y):=\rho(y)e^{-i\frac{b(s)|y|^2}{4}}.
\end{equation}

We firstly recall the following standard modulation decomposition of the solution $u(t)$ in the small tube of $Q$.
\begin{lemma}\label{lem:modul} There exists $\delta_0>0$ small enough such that for any $u(t)\in \mathcal C(I,H^1(\R))$  satisfying
\begin{equation}\label{est:small tube}
\sup_{t\in I}\inf_{\lambda_0>0,\gamma_0}\left\|\lambda_0^{\frac 12} u(t,\lambda_0 y)e^{i\gamma_0}- Q(y) \right\|_{H^1} \leq \delta_0,
\end{equation}
 there exist $\mathcal C^1$ functions $\lambda\in(0,+\infty)$, $b\in\R$, $\gamma\in\R$ on $I$ such that $u$ admits 
 a unique decomposition of the form 
\begin{equation}\label{eq:refined modul}
u(t,x)=\frac{1}{\lambda^{\frac{1}{2}}(t)}\left(P_{b(t)}+\eps(t,y)\right)e^{i\gamma(t)},\qquad 
y=\frac{x}{\lambda(t)},
\end{equation}
where the remainder $\eps$ obeys the following orthogonality structure:
\begin{equation}\label{eq:orth struct}
 \forall \ t\in I, \quad \psld{\eps}{i\Lambda P_b}=\psld{\eps}{|y|^2P_b}=\psld{\eps}{i\rho_b }=0. 
\end{equation} 
\end{lemma}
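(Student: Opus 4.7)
The plan is a pointwise implicit function theorem (IFT) argument at each $t_0\in I$. By the small-tube hypothesis one can choose reference values $(\lambda_*,\gamma_*)$ with $\lambda_*>0$ such that $w^*(y):=\lambda_*^{\frac12}u(t_0,\lambda_* y)e^{i\gamma_*}$ lies within $\delta_0$ of $Q$ in $H^1(\R)$. I then seek modulation parameters $(\lambda,b,\gamma)$ close to $(\lambda_*,0,\gamma_*)$ realizing \eqref{eq:refined modul}--\eqref{eq:orth struct}, by finding the zero of the map
\begin{equation*}
\Phi(\lambda,b,\gamma;u) := \bigl(\psld{\eps}{i\Lambda P_b},\ \psld{\eps}{|y|^2 P_b},\ \psld{\eps}{i\rho_b}\bigr)\in\R^3,\quad \eps := \lambda^{\frac12}u(\lambda y)e^{-i\gamma} - P_b,
\end{equation*}
where $P_b$ depends on $(\lambda,b)$ through Proposition \ref{prop:profile}.

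The key computation is the Jacobian of $\Phi$ in $(\lambda,b,\gamma)$ at the base point (with $u$ chosen so that $w^*=Q$). Using $\partial_\lambda[\lambda^{\frac12}u(\lambda y)e^{-i\gamma}] = \lambda^{-1}\Lambda w$ and $\partial_\gamma(\cdot) = -i(\cdot)$, together with the expansion $P_b=(Q+\lambda Z)e^{-ib|y|^2/4}$ from \eqref{def:P}, one obtains to leading order $\partial_\lambda \eps \sim \lambda_*^{-1}\Lambda Q$, $\partial_b \eps \sim \tfrac{i}{4}|y|^2 Q$, $\partial_\gamma \eps \sim -iQ$, with $P_b\to Q$, $\Lambda P_b\to \Lambda Q$, $\rho_b\to \rho$ at the base. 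Since $\psld{u}{v}=\Re\int u\bar v$, most entries of the $3\times 3$ Jacobian vanish by parity (real paired against purely imaginary). The surviving entries are controlled by the pairings
\begin{equation*}
\psld{\Lambda Q}{|y|^2 Q}=-\|yQ\|_2^2,\qquad \psld{Q}{\rho}=\tfrac12 \|yQ\|_2^2,
\end{equation*}
both furnished by the algebraic identities \eqref{fact:algebra}--\eqref{est:basic fact2} ($L_+\Lambda Q=-2Q$, $L_-|y|^2 Q=-4\Lambda Q$, $L_+\rho=|y|^2 Q$). A cofactor expansion then shows that the leading-order Jacobian determinant is a nonzero multiple of $\lambda_*^{-1}\|yQ\|_2^6$, so $\Phi$ is a local $\mathcal{C}^1$-diffeomorphism in $(\lambda,b,\gamma)$ at the base.

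Consequently the IFT produces unique $(\lambda,b,\gamma)$ solving $\Phi=0$ on a neighborhood of each $t_0$, depending smoothly on $u$. Patching via local uniqueness and connectedness of $I$ yields global functions on $I$. Continuity of $(\lambda,b,\gamma)$ in $t$ is inherited from $u\in \mathcal{C}(I,H^1(\R))$; the $\mathcal{C}^1$ upgrade uses $u\in \mathcal{C}^1(I,H^{-1}(\R))$ from Proposition \ref{prop:LWP} together with the fact that $i\Lambda P_b$, $|y|^2 P_b$, $i\rho_b\in H^1(\R)$, so the implicit equation $\Phi=0$ can be differentiated in $t$ via the $H^{-1}$--$H^1$ pairing. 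The only genuine hurdle is verifying the algebraic non-degeneracy of the Jacobian; this is the crux of the argument and is precisely engineered into the orthogonality profile $\{i\Lambda P_b,|y|^2 P_b,i\rho_b\}$, which is tailored as the dual to the three symmetry directions ($L^2$-scaling, phase, and the quadratic-phase $b$-direction) via \eqref{fact:algebra}--\eqref{est:basic fact2}.
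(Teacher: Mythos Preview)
Your proposal is correct and follows precisely the standard implicit function theorem approach; the paper itself does not give a proof but simply refers to \cite{MeR05:NLS:blp}, where this argument is carried out in detail. Your Jacobian computation is accurate (the determinant is indeed a nonzero multiple of $\lambda_*^{-1}\|yQ\|_2^6$), and your handling of the $\mathcal C^1$ regularity via the $H^{-1}$--$H^1$ pairing is the right mechanism. One small point you glossed over: since the test functions $i\Lambda P_b$, $|y|^2 P_b$, $i\rho_b$ themselves depend on $(\lambda,b)$, differentiating $\Phi$ also produces terms of the form $\psld{\eps}{\partial(\text{test function})}$, but these vanish at the base point $\eps=0$ (or are $O(\delta_0)$ in general) and hence do not affect invertibility.
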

\begin{proof}
	 Please refer to \cite{MeR05:NLS:blp} for details.
	\end{proof}

Let $E_0\in \R$  and $t_1<0$ be close to $0$. By Remark \ref{rem:tapp}, we set up the initial rescaled time $s_1$ as 
\begin{equation*}
s_1:=\left|C_s^{-1}t_1\right|^{-1/3} \gg 1 \quad \Longleftrightarrow\quad  t_1 = t_{\rm app}(s_1). 
\end{equation*} 
Let $\lambda_1$ and $ b_1$ be given by Lemma \ref{lem:initial conds}. Let $u(t)$ be the solution of $\eqref{eq:dnls}$ for $t\leq t_1$ with final data 
\begin{equation}\label{eq:final data}
u(t_1,x)=\frac{1}{\lambda_1^{\frac{1}{2}}}P_{b_1}\left(\frac{x}{\lambda_1}\right).
\end{equation}
As long as the solution $u(t)$ satisfies  \eqref{est:small tube}, we consider its modulation decomposition $(\lambda,b,\gamma,\varepsilon)$ from Lemma \ref{lem:modul} and define the rescaled time variable $s$ related to $\lambda$ by 
\begin{equation}
\label{def:st}
s= s_1-\int_{t}^{t_1} \frac{1}{\lambda^2(\tau)}d\tau \quad \Longleftrightarrow \quad \frac{ds}{dt}=\frac{1}{\lambda^2(s)} \quad\text{and}\quad  s_1 = s(t_1).
\end{equation}

The main result in this subsection is the following uniform backwards estimates on the decomposition of $u(s)$ on the rescaled time interval $[s_0,s_1]$, where $s_0$ is sufficiently large, but independent of $s_1$.

\begin{proposition}\label{prop:unif est}
Let $\mu>0$ and $K\geq 7$, there exists sufficiently large $s_0>0$, which is independent of $s_1$ such that
the solution $u$ of $\eqref{eq:dnls}$ with final data \eqref{eq:final data}
exists and  satisfies \eqref{est:small tube} on the rescaled time interval $[s_0,s_1]$. Moreover, its modulation decomposition
\begin{equation*}
u(t,x)=\frac{1}{\lambda^{\frac{1}{2}}(s)}
\left(P_b+\eps\right)\left(s,y\right) e^{i\gamma(s)},
\quad \text{where}\quad \frac{ds}{dt} = \frac{1}{\lambda^2(s)},\quad 
y=\frac{x}{\lambda(s)},
\end{equation*}
satisfies the following uniform estimates
\begin{equation}\label{est:small est:s}
\norm{\eps(s)}_{H^1}\lesssim s^{-(K+1)},\quad
\left|\frac{\lambda^{\frac  12}(s)}{\lambda_{\rm app}^{\frac 12}(s)}-1\right|+
\left|\frac{b(s)}{b_{\rm app}(s)}-1\right|  
\lesssim s^{-1}
\end{equation}
on $ [s_0,s_1]$. In addition, we have for any $s\in [s_0, s_1]$ that
\begin{equation}\label{est:Pblamb energy2}
|E(P_{b,\lambda,\gamma}(s))-E_0|\leq O(s^{-K+3}).
\end{equation}
\end{proposition}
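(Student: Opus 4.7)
The plan is to run a backwards-in-time bootstrap argument on the rescaled interval $[s^*,s_1]$, where $s^*\in[s_0,s_1]$ is the smallest rescaled time at which a set of strict a-priori estimates still holds, and then improve each estimate strictly to force $s^*=s_0$. Near the final time $s_1$, the choice of $\lambda_1,b_1$ in Lemma~\ref{lem:initial conds} ensures $u(t_1)$ is $\delta_0$-close (in the sense of \eqref{est:small tube}) to $Q$ after rescaling, so the modulation decomposition of Lemma~\ref{lem:modul} is valid and $(\lambda,b,\gamma,\varepsilon)$ exist on a nontrivial interval. The bootstrap hypotheses I would impose on $[s^*,s_1]$ are
\begin{equation*}
\norm{\varepsilon(s)}_{H^1}\leq s^{-(K+1)},\qquad
\Big|\tfrac{\lambda^{1/2}(s)}{\lambda_{\rm app}^{1/2}(s)}-1\Big|+\Big|\tfrac{b(s)}{b_{\rm app}(s)}-1\Big|\leq s^{-1},\qquad
|E(P_{b,\lambda,\gamma}(s))-E_0|\leq s^{-(K-3)},
\end{equation*}
with suitable large implicit constants; the goal is then to recover these bounds with strictly smaller constants.

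The first block of work is the \emph{modulation analysis}. Plugging \eqref{eq:refined modul} into \eqref{eq:dnls}, using \eqref{eq:Pb} and differentiating the three orthogonality conditions in \eqref{eq:orth struct} in $s$, yields a linear system for the modulation speeds $\bigl(\tfrac{\lambda_s}{\lambda}+b,\ b_s+b^2-\theta,\ 1-\gamma_s\bigr)$ whose matrix is, modulo $O(\norm{\varepsilon}_{H^1}+|b|+\lambda)$, a nondegenerate constant matrix determined by the algebraic relations \eqref{fact:algebra} and \eqref{est:basic fact2}. Combining this with \eqref{est:error} and the bootstrap hypothesis on $\varepsilon$ gives
\begin{equation*}
\Big|\tfrac{\lambda_s}{\lambda}+b\Big|+|b_s+b^2-\theta|+|1-\gamma_s|\lesssim \norm{\varepsilon}_{H^1}+(|b|^2+\lambda)^{K+2},
\end{equation*}
which is the basic algebraic input for everything that follows.

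The main obstacle is the second block: the \emph{Energy-Morawetz estimate} for $\varepsilon$. Following \cite{RaS11:NLS:mini sol,MeRS14:NLS:blp,LeMR:CNLS:blp}, I would introduce a Lyapunov functional of the form
\begin{equation*}
\mathcal J(s)=\tfrac{1}{\lambda^{2}}\bigl(E(u)-E(P_{b,\lambda,\gamma})\bigr)+\tfrac{b}{2\lambda^{2}}\psld{\Im(\varepsilon\,\partial_y\bar\phi_A P_b)}{1}+\text{quadratic-in-}\varepsilon\ \text{correction},
\end{equation*}
where $\phi_A$ is the localized function from Section~\ref{sect:notation} with $A=A(s)$ chosen like a negative power of $\lambda$. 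The coercivity of the linearized energy around $Q$ (using the orthogonality conditions and the kernel/generalized-kernel structure of $L_\pm$ given by \eqref{fact:algebra}) will dominate the $\dot H^1$ norm of $\varepsilon$ up to $L^2$-localized corrections, while the Morawetz/virial term supplies the missing dissipative control. The delta potential contribution $\tfrac12\mu|u(0)|^2-\tfrac12\mu|P_{b,\lambda,\gamma}(0)|^2$ is small after rescaling by a factor $\lambda$, so it is harmless under the bootstrap. Time-differentiating $\mathcal J$, using the equations for $\varepsilon$ derived from \eqref{eq:dnls} and \eqref{eq:Pb}, and using the modulation bounds above, I expect to get
\begin{equation*}
\frac{d\mathcal J}{ds}\lesssim s^{-2(K+2)}+s^{-1}\norm{\varepsilon}_{H^1}^{2},
\end{equation*}
which, integrated backwards from $s_1$ with the vanishing final data $\varepsilon(s_1)=0$ and Gronwall, strictly improves the $\varepsilon$-estimate to $\norm{\varepsilon(s)}_{H^1}\ll s^{-(K+1)}$. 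Closing this step cleanly is the technical heart of the argument.

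The third block closes the parameter and energy bootstraps. Using \eqref{est:Pblamb denergy} and conservation of $E(u)$ gives
\begin{equation*}
\Big|\tfrac{d}{ds}E(P_{b,\lambda,\gamma})\Big|\lesssim \tfrac{1}{\lambda^{2}}\bigl(\norm{\varepsilon}_{H^1}+(|b|^2+\lambda)^{K+2}\bigr)\lesssim s^{-(K-1)},
\end{equation*}
which, integrated from $s_1$ together with $E(P_{b_1,\lambda_1,\gamma_1})=E_0+O(\lambda_1^{K})$ from \eqref{est:Pblamb energy} and \eqref{est:energy:ini}, yields \eqref{est:Pblamb energy2} with strict improvement. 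Finally, the energy identity \eqref{est:Pblamb energy} rewrites the energy bound as an estimate on $\mathcal E(b,\lambda)-C_0$, i.e.\ on $\tfrac{b^2}{\lambda^2}-\tfrac{2\beta}{\lambda}-C_0$; combining this algebraic relation with the modulation ODE $\tfrac{\lambda_s}{\lambda}+b=O(s^{-(K+1)})$ and the defining equation $\mathcal F(\lambda_1)=s_1$ from \eqref{def:F}, one inverts to recover strictly the parameter estimate in \eqref{est:small est:s} via \eqref{est:F}. This strict improvement of all three bootstrap bounds forces $s^*=s_0$ and, in particular, $u$ stays in the small tube \eqref{est:small tube} throughout $[s_0,s_1]$, completing the proof.
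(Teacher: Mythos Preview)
Your proposal captures the right architecture—bootstrap, modulation analysis, Energy–Morawetz estimate, inversion for $(\lambda,b)$ via $\mathcal E$ and $\mathcal F$—but there is a genuine gap in the modulation step that would prevent your energy argument from closing.

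You derive $|\mathrm{Mod}(s)|\lesssim \|\varepsilon\|_{H^1}+(|b|^2+\lambda)^{K+2}$ and stop there. This is too weak. Differentiating $\psld{\varepsilon}{i\Lambda P_b}=0$ produces, after the linearization and the algebraic relations \eqref{fact:algebra}, a term $-2\psld{\varepsilon}{P_b}$ that is a priori only $O(\|\varepsilon\|_2)$; this is what limits your bound. In the Energy–Morawetz step, the contribution of the modulation operator to the time derivative of the functional is of size $|\mathrm{Mod}(s)|\cdot O(\|\varepsilon\|)$ (cf.\ \eqref{est:d Mod1}–\eqref{est:d Mod3}), which with your bound becomes $O(\|\varepsilon\|^2)$—the \emph{same} size as the coercive signal $b\|\varepsilon\|_{H^1}^2\approx s^{-1}\|\varepsilon\|_{H^1}^2$, but without the smallness factor $b$. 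The monotonicity (or your Gronwall) then cannot close; your claimed error $O(s^{-2(K+2)})$ in $\tfrac{d\mathcal J}{ds}$ is not achievable with only $|\mathrm{Mod}|\lesssim \|\varepsilon\|$. The paper resolves this by running a nested bootstrap (Lemma~\ref{lem:modul est}) that uses conservation of mass to show $|\psld{\varepsilon}{P_b}|\lesssim \|\varepsilon\|_2^2+\int_s^{s_1}\tfrac{d}{d\sigma}\|P_b\|_2^2\lesssim s^{-(K+3)}$, which then upgrades the modulation estimate to $|\mathrm{Mod}(s)|\lesssim s^{-(K+2)}$, strictly smaller than $\|\varepsilon\|$. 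This extra almost–orthogonality is the missing ingredient in your plan.

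There are also several structural differences from the paper that you should be aware of. The paper bootstraps with \emph{weaker} powers ($\|\varepsilon\|<s^{-K}$ and $|\cdot|<s^{-1/2}$ in \eqref{est:small est:rough}) and then improves to the target \eqref{est:small est:s}; this avoids the constant–chasing in your ``same bound, smaller constant'' scheme. The paper's functional is $S(s,\varepsilon)=\lambda^{-4}\bigl(H(s,\varepsilon)+b\,J(\varepsilon)\bigr)$ where $H$ is the quadratic linearized energy and $J(\varepsilon)=\tfrac12\Im\int \phi_A'\,\partial_y\varepsilon\,\bar\varepsilon$ is a \emph{quadratic} localized virial with a \emph{fixed} large $A$ (not $A=A(s)\sim\lambda^{-\alpha}$); the role of $J$ is precisely to cancel the bad term $\dual{f(P_b+\varepsilon)-f(P_b)}{\Lambda\varepsilon}$ appearing in $\tfrac{d}{ds}H$. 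Your proposed $\mathcal J$ built on $E(u)-E(P_{b,\lambda,\gamma})$ and a term linear in $\varepsilon$ does not obviously provide this cancellation. Finally, the paper does not integrate and apply Gronwall; it proves a one–sided monotonicity $\tfrac{dS}{ds}\gtrsim \tfrac{b}{\lambda^4}(\|\varepsilon\|_{H^1}^2-\kappa^2 s^{-2(K+1)})$ and closes the $\varepsilon$–bootstrap by a contradiction argument on two auxiliary times $s_\dagger,s_\ddagger$ (Proposition~\ref{prop:bootstrap}). Your direction of inequality and the backward Gronwall sketch would need to be reworked accordingly.
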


In the rest of this part, we will use a bootstrap argument involving the following estimates 
\begin{equation}
\label{est:small est:rough}\norm{\eps(s)}_{H^1} < s^{-K},\quad
\left|\frac{\lambda^{\frac12}(s)}{\lambda_{\rm app}^{\frac12}(s)}-1\right|
+\left|\frac{b(s)}{b_{\rm app}(s)}-1\right|  <s^{-\frac 12}
\end{equation}
to show \eqref{est:small est:s} and complete the proof of Proposition \eqref{prop:unif est}.

\subsection{Bootstrap argument}\label{sect:boot base}

For $s_0>0$  to be chosen large enough (independently of $s_1$),   we 
define
\begin{equation}\label{def:time interval}
s_*=\inf\{ \tau\in[s_0,s_1];\eqref{est:small est:rough} \text{ holds on }  [\tau,s_1]\}.
\end{equation}
By \eqref{est:lamb b:ini} and \eqref{eq:final data},  the base case for the induction argument
\begin{equation}\label{est:base case}
\varepsilon(s_1) \equiv 0,  \quad 
\left|\frac{\lambda_1^{\frac12}}{\lambda_{\rm app}^{\frac12}(s_1)}-1\right|+\left|\frac {b_1}{b_{\rm app}(s_1)}-1\right|\lesssim s_1^{-1} < s^{-\frac12}_1
\end{equation}  
holds for $s_1$ large,  hence $s_*$ is well-defined and $s_*<s_1$ by the continuity of the solution of \eqref{eq:dnls} in $H^1(\R)$. 
After the preparations in Subsection \ref{sect:modul} and Subsection \ref{sect:Mono form},  we will prove that the estimates \eqref{est:small est:s} and \eqref{est:Pblamb energy2} hold on $[s_*,s_1]$ in Subsection \ref{sect:boot pf}. Then we obtain $s_*=s_0$ from the bootstrap argument and complete the proof of Proposition \ref{prop:unif est}. Two key ingredients in the proof are the follows:
\begin{enumerate}
	\item  Dynamical estimate of the parameters $(\lambda, b)$ in Subsection \ref{sect:modul} by deriving the modulation equations from the orthogonal structure of $\varepsilon$ in Lemma \ref{lem:modul}, the mass conservation law and the construction of $P_b$ in Proposition \ref{prop:profile}. 
	\item Dynamical estimate of the remainder $\varepsilon$  in Subsection \ref{sect:Mono form} by combining the  Energy-Morawetz functional with the coercivity property of the linearized operators $L_{\pm}$.  The Energy-Morawetz estimate was firstly used in \cite{RaS11:NLS:mini sol}. 
\end{enumerate}

\subsection{Modulation equations}\label{sect:modul}

In this part, we work with the solution $u(t)$ of Proposition \ref{prop:unif est} on the rescaled time interval $[s_*,s_1]$ (i.e.  $\varepsilon$ and $\lambda, b$ satisfy \eqref{est:small est:rough} by the definition of $s_{*}$ in \eqref{def:time interval}), and show the dynamics of the modulation parameters $\lambda, b$, which can be approximated  by \eqref{eq:ode appsys} up to small error. Define 
\begin{equation}\label{def:modul}
\Mod(s) =\begin{pmatrix}
\frac{\lambda_s}{\lambda}+ b\\
b_s+b^2-\theta\\
1-\gamma_s
\end{pmatrix}.
\end{equation}

\begin{lemma}\label{lem:modul est}
For all $s\in [s_*,s_1]$, then we have
\begin{equation}\label{est:modul}
|\Mod(s)|\lesssim s^{-(K+2)},
\end{equation} 
\begin{equation}\label{est:Pb orth}
|\psld{\eps(s)}{P_b}| <  s^{-(K+2)}.
\end{equation}
\end{lemma}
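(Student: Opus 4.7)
The strategy has two parts: derive a closed linear system for the modulation vector $\Mod(s)$ from the orthogonality conditions of Lemma \ref{lem:modul}, and control $\psld{\eps}{P_b}$ through the mass conservation law. I would begin by inserting the decomposition \eqref{eq:refined modul} into \eqref{eq:dnls}, passing to the rescaled variable $s$, and subtracting the identity \eqref{eq:Pb} for $P_b$. This produces an evolution equation of the schematic form
$$i\partial_s\eps+\partial_y^2\eps-\eps -i\tfrac{\lambda_s}{\lambda}\Lambda\eps+(1-\gamma_s)\eps + \bigl[f(P_b+\eps)-f(P_b)\bigr]+\lambda g(\eps) = \mathcal M + \mathcal N(\eps) - \Psi_K e^{-ib|y|^2/4},$$
where $\mathcal M = -i(\tfrac{\lambda_s}{\lambda}+b)\Lambda P_b + (1-\gamma_s)P_b + (b_s+b^2-\theta)\tfrac{|y|^2}{4}P_b$ collects the three modulation sources and $\mathcal N(\eps)$ is at least quadratic in $\eps$.

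Next, I would differentiate each of the three scalar constraints in \eqref{eq:orth struct} in $s$, substitute the $\eps$-equation for $\partial_s\eps$, and integrate by parts on the self-adjoint linear part. This yields a $3\times 3$ linear system $A(s)\Mod(s)=B(s)$. Since $P_b=Q+O(\lambda+|b|)$ uniformly on the relevant region, $A(s)$ is a perturbation of a constant matrix $A_\infty$ whose entries are $L^2$ pairings among $Q$, $\Lambda Q$, $|y|^2 Q$ and $\rho$. The algebraic identities \eqref{fact:algebra}, together with $\psld{Q}{\rho}=\tfrac12\norm{yQ}_2^2>0$ from \eqref{est:basic fact2}, make $A_\infty$ explicitly invertible (this is precisely why the orthogonality directions $i\Lambda P_b$, $|y|^2 P_b$, $i\rho_b$ were chosen). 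Since $\lambda+|b|=O(s^{-1})$ on $[s_*,s_1]$, the perturbation $A(s)-A_\infty$ has norm $O(s^{-1})$ and $A(s)^{-1}$ is uniformly bounded.

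I would then estimate $B(s)$ piece by piece. First, the pairings of $\Psi_K e^{-ib|y|^2/4}$ against the exponentially decaying test functions are controlled by \eqref{est:error} by $\lambda|\Mod|+(|b|^2+\lambda)^{K+2}$; the first part can be moved to the left by invertibility of $A(s)$, while the second is $O(s^{-2(K+2)})$. Second, the terms $\psld{\eps}{\partial_s f_j}$ with $f_j\in\{i\Lambda P_b,|y|^2 P_b,i\rho_b\}$ are linear in $\Mod$ with coefficients of size $\norm{\eps}_{H^1}\lesssim s^{-K}$ and are likewise absorbed. Third, the nonlinear contributions $\psld{\mathcal N(\eps)}{f_j}$ are bounded through H\"older and Sobolev embedding by $\norm{\eps}_{H^1}^2 \lesssim s^{-2K}$. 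Collecting everything gives $|\Mod(s)|\lesssim s^{-2(K+2)}+s^{-2K}\lesssim s^{-(K+2)}$, which is \eqref{est:modul}.

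For \eqref{est:Pb orth}, I would exploit mass conservation together with the initial condition $\eps(s_1)=0$: in the rescaled variable, $\norm{P_b+\eps}_2^2=\norm{P_{b_1}}_2^2$, hence $2\psld{\eps}{P_b}=\norm{P_{b_1}}_2^2-\norm{P_b}_2^2-\norm{\eps}_2^2$. Integrating \eqref{est:Pblamb dmass} on $[s,s_1]$ and inserting the modulation bound just proved gives $|\norm{P_{b_1}}_2^2-\norm{P_b}_2^2|\lesssim\int_s^{s_1}[\lambda|\Mod|+(|b|^2+\lambda)^{K+2}]\,ds'\lesssim s^{-(K+3)}$, which together with $\norm{\eps}_2^2\lesssim s^{-2K}$ yields the strict inequality $|\psld{\eps}{P_b}|<s^{-(K+2)}$ for $s\geq s_0$ large enough. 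The principal obstacle is the algebraic verification in Step 2: carefully tracking which pairings become degenerate in the limit $P_b\to Q$ (for example $\psld{Q}{\Lambda Q}=0$) and confirming that the three chosen orthogonality directions still produce an invertible $A_\infty$; the rest is bookkeeping once the bootstrap estimates \eqref{est:small est:rough} are in hand.
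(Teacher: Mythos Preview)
Your outline has the right architecture but contains a genuine gap in the estimation of $B(s)$. When you differentiate the first orthogonality condition $\psld{\eps}{i\Lambda P_b}=0$ and ``integrate by parts on the self-adjoint linear part'', the identity $L_+\Lambda Q=-2Q$ from \eqref{fact:algebra} forces the term
\[
\dual{-\partial_y^2\eps+\eps-df(P_b)\eps}{\Lambda P_b}
= -2\psld{\eps}{P_b}+O(s^{-1}\|\eps\|_{H^1})
\]
to appear on the right-hand side. This is a \emph{linear}-in-$\eps$ contribution that is not one of the three items you list ($\Psi_K$, $\psld{\eps}{\partial_s f_j}$, $\mathcal N(\eps)$), is not multiplied by $\Mod$, and is not an orthogonality condition in \eqref{eq:orth struct}. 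A priori one only has $|\psld{\eps}{P_b}|\lesssim\|\eps\|_{H^1}\lesssim s^{-K}$ from \eqref{est:small est:rough}, which would yield $|\Mod(s)|\lesssim s^{-K}$ rather than $s^{-(K+2)}$. (For the other two orthogonality directions the analogous linear terms land back on $\psld{\eps}{i\Lambda P_b}$ and $\psld{\eps}{|y|^2 P_b}$, which do vanish; only the first one is dangerous.) Your collected bound $s^{-2(K+2)}+s^{-2K}$ is therefore missing the dominant contribution.

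This creates a genuine circularity: \eqref{est:modul} needs $|\psld{\eps}{P_b}|\lesssim s^{-(K+2)}$, while your derivation of \eqref{est:Pb orth} via mass conservation explicitly uses \eqref{est:modul}. The paper breaks this loop by running an \emph{inner} bootstrap on the quantity $|\psld{\eps}{P_b}|<s^{-(K+2)}$: on the sub-interval where this holds one gets $|\Mod(s)|\lesssim s^{-(K+2)}$ exactly as you outline, and then the mass-conservation computation (identical to your last paragraph) improves the bound to $|\psld{\eps}{P_b}|\lesssim s^{-(K+3)}$, which strictly closes the inner bootstrap. An equivalent fix is a finite iteration: first accept $|\Mod|\lesssim s^{-K}$, feed it into the mass identity to get $|\psld{\eps}{P_b}|\lesssim s^{-(K+1)}$, re-run the modulation system, and repeat once more. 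Either way, the two estimates must be obtained jointly, not sequentially.
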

\begin{proof}	
Since $\eps(s_1)\equiv0$, we may define
$$
s_{**}=\inf\{s\in [s_*,s_1];\   |\psld{\eps(\tau)}{P_b}|<\tau^{-(K+2)}\text{ holds on }  [s,s_1]\}.
$$ 
Therefore, for any  $s\in [s_{**},s_1]$, we have
\begin{equation}\label{est:Pb orth:sm interval}
|\psld{\eps(s)}{P_b}|\leq s^{-(K+2)}.
\end{equation}

Since the estimates of \eqref{est:modul} and \eqref{est:Pb orth} are mixed, we divide the proof into two steps and use the bootstrap argument to conclude the proof.

\noindent{\bf Step 1:} Estimate of the modulation system ${\rm Mod}(s)$ on $[s_{**}, s_1]$.

By \eqref{eq:dnls} and \eqref{eq:Pb}, $\varepsilon$ should satisfy the following difference equation:
\begin{multline}\label{eq:epsilon}
i\eps_s+\partial^2_y \eps+ib\Lambda\eps-\eps+\big(f(P_b+\eps)-f(P_b)\big)+\lambda\, \big(g(P_b+\eps)-g(P_b)\big)\\
-i\left(\frac{\lambda_s}{\lambda}+b\right)\Lambda (P_b+\eps)
+(1-\gamma_s)(P_b+\eps)
+\big(b_s+b^2-\theta\big)\frac{|y|^2}{4}P_b\\
=-\Psi_K e^{-i\frac{b|y|^2}{4}},
\end{multline}
where we used the fact that \eqref{delta:scal}.
 
Formally, by combining the equation \eqref{eq:epsilon} on  $\varepsilon$ with the estimate \eqref{est:error} on $\Psi_K$,  we differentiate in time the orthogonality conditions for $\eps$ provided in Lemma \ref{lem:modul}, to obtain the dynamics of the modulation parameters $\lambda, b$ and $\gamma$. Since it is a standard argument (see e.g. \cite{MeR06:NLS:blp, PlR07:NLS:blp, RaS11:NLS:mini sol}), we only sketch the proof here.

{\bf As for the orthogonality condition $\psld{\eps}{i\Lambda P_b}=0$.} 

Taking time derivative on $\psld{\eps}{i\Lambda P_b}=0$, we obtain 
$$
\dual{\eps_s}{i\Lambda P_b}=- \dual{\eps}{i \partial_s(\Lambda P_b)}.
$$

First,  we consider the contribution from the term $\dual{\eps}{i \partial_s(\Lambda P_b)}$.  By \eqref{def:Pb}, we have
\begin{equation}\label{est:LambdaPb:cal1}
\Lambda P_b=\left(\Lambda P-ib\frac{|y|^2}{2}P\right)e^{-ib\frac{|y|^2}{4}},
\end{equation}
and
\begin{align}\label{est:dsLambPb}
\frac d{d s} ( \Lambda P_b) 
  =  \left( (\Lambda P)_s   - i b_s \frac{|y|^2}{4} \Lambda P- i b_s \frac{|y|^2}{2}P - ib \frac{|y|^2}{2} (P)_s  - b b_s \frac{|y|^4}{8} P \right) e^{-i \frac b4 |y|^2},
\end{align}
where we estimate $(P)_s$ from \eqref{def:P} as follows,
\begin{multline}\label{est:dsP}
( P)_s  =\frac{\lambda_s}{\lambda}\bigg(\sum_{(j,k)\in\Sigma_K}(k+1)b^{2j}\lambda^{k+1}P_{j,k}^+  +i\sum_{(j,k)\in\Sigma_K}(k+1)b^{2j+1}\lambda^{k+1}P_{j,k}^-
\Bigg)\\
+b_s\bigg(\sum_{(j,k)\in\Sigma_K}2jb^{2j-1}\lambda^{k+1}P_{j,k}^+  +i\sum_{(j,k)\in\Sigma_K}(2j+1)b^{2j}\lambda^{k+1}P_{j,k}^-
 \Bigg)\\
=\left(\frac{\lambda_s}{\lambda} + b\right)\bigg(\sum_{(j,k)\in\Sigma_K}(k+1)b^{2j}\lambda^{k+1}P_{j,k}^+  +i\sum_{(j,k)\in\Sigma_K}(k+1)b^{2j+1}\lambda^{k+1}P_{j,k}^-
 \Bigg)\\
 \qquad +\left(b_s+b^2-\theta \right)\bigg(\sum_{(j,k)\in\Sigma_K}2jb^{2j-1}\lambda^{k+1}P_{j,k}^+  +i\sum_{(j,k)\in\Sigma_K}(2j+1)b^{2j}\lambda^{k+1}P_{j,k}^-
 \Bigg)\\
 -  b \bigg(\sum_{(j,k)\in\Sigma_K}(k+1)b^{2j}\lambda^{k+1}P_{j,k}^+  +i\sum_{(j,k)\in\Sigma_K}(k+1)b^{2j+1}\lambda^{k+1}P_{j,k}^-
 \Bigg)\\
- \left(b^2-\theta \right)\bigg(\sum_{(j,k)\in\Sigma_K}2jb^{2j-1}\lambda^{k+1}P_{j,k}^+  +i\sum_{(j,k)\in\Sigma_K}(2j+1)b^{2j}\lambda^{k+1}P_{j,k}^-
 \Bigg),
\end{multline}
and similar estimate about $( \Lambda P)_s $. From the properties of the functions $P_{j,k}^\pm$ in Proposition \ref{prop:profile}, we can deduce from \eqref{est:dsLambPb} that
\begin{equation*}
\sup_{y\in \R}\left(  e^{\frac {|y|}{2}} \left| \frac d{d s} ( \Lambda P_b) (y)\right| \right)\lesssim |\Mod(s)|+b^2(s) + \lambda(s).
\end{equation*}
Thus,  by \eqref{est:small est:rough}, we obtain for any $s\in [s_{**},s_1]$ that
\begin{equation}\label{est:eps orth1:t2}
  |\psld{\eps}{i \partial_s(\Lambda P_b)}| \lesssim \|\eps(s)\|_{2}\Big(|\Mod(s)|+b^2(s) + \lambda(s)\Big)
  \lesssim s^{-2}  |\Mod(s)|+ s^{-(K+2)}.
\end{equation}

Next, we deal with the term $\dual{\eps_s}{i\Lambda P_b}=-\dual{i \varepsilon_s}{\Lambda P_b}$. We firstly estimate the contribution from the first line in \eqref{eq:epsilon}. By \eqref{def:Pb}, \eqref{est:small est:rough},  we have
\begin{equation}\label{est:struct1}
\partial^2_y \eps+ib\Lambda\eps=e^{-ib\frac{|y|^2}{4}}\partial^2_y \left(e^{ib\frac{|y|^2}{4}} \eps\right)+b^2\frac{|y|^2}{4}\eps, 
\end{equation} 
\begin{multline}\label{est:struct2}
f(P_b+\eps)-f(P_b) =e^{-ib\frac{|y|^2}{4}}\left( f\left(P+e^{ib\frac{|y|^2}{4}}\eps\right)-f(P)\right)\\
 =e^{-ib\frac{|y|^2}{4}}df(P)\left(e^{ib\frac{|y|^2}{4}}\eps\right)+O(|\eps|^2)\\
=e^{-ib\frac{|y|^2}{4}}df(P)\left(e^{ib\frac{|y|^2}{4}}\eps\right)+O(s^{-2}|\eps| ),
\end{multline} and
\begin{equation}\label{est:struct3}
\lambda\, \big( g (P_b+\eps)-g(P_b)\big) =  \lambda \mu \delta \varepsilon
= O(s^{-2}|\varepsilon(0)|).
\end{equation}
Therefore,   by \eqref{est:small est:rough}, \eqref{est:Pb orth:sm interval}, and  the definition of $P$ in \eqref{def:P}, we have for any  $s\in [s_{**},s_1]$ that
\begin{multline}\label{est:eps orth1:t1-1}
\dual{-\partial^2_y \eps-ib\Lambda\eps+\eps-(f(P_b+\eps)-f(P_b))-\lambda \left( g (P_b+\eps)-g(P_b)\right)}{\Lambda P_b}\\
=\dual{-\partial^2_y \left(e^{ib\frac{|y|^2}{4}} \eps\right)+e^{ib\frac{|y|^2}{4}}\eps-df(Q) \left(e^{ib\frac{|y|^2}{4}}\eps\right)}{\Lambda Q-ib\frac{|y|^2}{2}Q}+O(s^{-2}\|\eps\|_{H^1})\\
=\dual{L_+ \left(e^{ib\frac{|y|^2}{4}} \eps\right)}{\Lambda Q}-\frac b2\dual{L_-\left(e^{ib\frac{|y|^2}{4}} \eps\right)}{i|y|^2Q}+O(s^{-2}\|\eps\|_{H^1})\\
=\dual{e^{ib\frac{|y|^2}{4}} \eps}{L_+(\Lambda Q)}-\frac b2\dual{e^{ib\frac{|y|^2}{4}} \eps}{iL_-(|y|^2Q)}+O(s^{-2}\|\eps\|_{H^1})\\
=-2\psld{\eps}{ e^{-ib\frac{|y|^2}{4}}Q}+2b\psld{\eps}{ie^{-ib\frac{|y|^2}{4}}\Lambda Q}+O(s^{-2}\|\eps\|_{H^1})\\
=-2\psld{\eps}{ P_b}+2b\psld{\eps}{i \Lambda P_b}+O(s^{-2}\|\eps\|_{H^1})
=O(s^{-(K+2)}),
\end{multline}
where  in the fourth equality we use the algebraic structure  \eqref{fact:algebra} of the operators $L_{\pm}$ and in the last equality we use \eqref{est:Pb orth:sm interval} and the orthogonal relation $\psld{\eps}{i\Lambda P_b}=0$ in  \eqref{eq:orth struct}. 

We secondly consider the contribution from the second line in \eqref{eq:epsilon}. By  the facts that $\psld{P_b}{\Lambda P_b}=0$ and 
\begin{equation}\label{est:basic fact1}
\psld{|y|^2P_b}{\Lambda P_b} = -\norm{y P_b}^2_{2}, 
\end{equation} we have for any $s\in [s_{**}, s_1]$ that
\begin{multline}\label{est:eps orth1:t1-2}
\psld{-i\left(\frac{\lambda_s}{\lambda}+b\right)\Lambda(P_b+\eps)+(1-\gamma_s)(P_b+\eps)+(b_s+b^2-\theta)\frac{|y|^2}{4}P_b}{\Lambda P_b}\\
=-\frac{1}{4}\Big(b_s+b^2-\theta\Big)\norm{yP_b}_2^2+O(|\Mod(s)|\norm{\eps}_{H^1})\\
=-\frac{1}{4}\Big(b_s+b^2-\theta\Big) \norm{yQ}_2^2 +O(s^{-2}|\Mod(s)|),
\end{multline}
where we used \eqref{est:small est:rough}, and the definition of $P_b$ in \eqref{def:Pb}.

Finally, we consider the contribution from the third line in \eqref{eq:epsilon}. By \eqref{est:error}, we have
\begin{equation}\label{est:eps orth1:t1-3}
\left| \psld{\Psi_K}{\Lambda P-ib\frac{|y|^2}{2}P}\right|\lesssim s^{-2} |\Mod(s)| + s^{-2(K+2)}.
\end{equation}
Combining \eqref{est:eps orth1:t2}, \eqref{est:eps orth1:t1-1}, \eqref{est:eps orth1:t1-2} and \eqref{est:eps orth1:t1-3}, we have for any $s\in [s_{**}, s_1]$ that
\begin{equation}\label{est:b dyn}
|b_s+b^2-\theta| \lesssim s^{-2} |\Mod(s)| + s^{-(K+2)}.
\end{equation}

{\bf As for the orthogonality condition $\psld{\eps}{ |y|^2 P_b}=0$.}

Taking time derivative on $\psld{\eps}{ |y|^2 P_b}=0$, we obtain 
$$
\dual{i\eps_s}{i|y|^2 P_b}= \dual{\eps_s}{|y|^2 P_b}=- \dual{\eps}{|y|^2 \partial_s( P_b)}.
$$
By the definition of $P_b$ in \eqref{def:Pb}, we have
\begin{align}\label{est:dsPb}
\frac d{d s} (P_b) 
=  \left( (P)_s   - i b_s \frac{|y|^2}{4} P \right) e^{-i \frac b4 |y|^2}. 
\end{align}
By \eqref{est:dsP} and similar estimate to \eqref{est:eps orth1:t2}, 
we have for any $s\in [s_{**}, s_1]$ that
\begin{equation}\label{est:eps orth2:t2}
|\psld{\eps}{|y|^2 \partial_s( P_b)}| 
\lesssim s^{-2}  |\Mod(s)|+ s^{-(K+2)}.
\end{equation}

Next, we consider the term $\dual{i\eps_s}{i|y|^2 P_b}$. Similar to \eqref{est:eps orth1:t1-1},  we compute by \eqref{def:P}, \eqref{def:Pb}, \eqref{est:small est:rough}  as follows
\begin{multline}
\quad \dual{-\partial^2_y \eps-  ib\Lambda\eps+\eps-(f(P_b+\eps)-f(P_b))-\lambda \left( g (P_b+\eps)-g(P_b)\right)}{i |y|^2 P_b}  \\
=\dual{-\partial^2_y \left(e^{ib\frac{|y|^2}{4}} \eps\right)+e^{ib\frac{|y|^2}{4}}\eps-df(Q) \left(e^{ib\frac{|y|^2}{4}}\eps\right)}{i|y|^2Q}+O(s^{-2}\|\eps\|_{H^1}) \\
 =\dual{L_-\left(e^{ib\frac{|y|^2}{4}} \eps\right)}{i|y|^2Q}+O(s^{-2}\|\eps\|_{H^1})  \\
 =\dual{e^{ib\frac{|y|^2}{4}} \eps}{iL_-(|y|^2Q)}+O(s^{-2}\|\eps\|_{H^1})  \\
 =-2\psld{\eps}{ie^{-ib\frac{|y|^2}{4}}\Lambda Q}+O(s^{-2}\|\eps\|_{H^1})  \\
 =-2\psld{\eps}{ie^{-ib\frac{|y|^2}{4}}\Lambda P}+O(s^{-2}\|\eps\|_{H^1}) \\
=-2\psld{\eps}{i \Lambda P_b} + b \psld{\varepsilon}{|y|^2P_b}+O(s^{-2}\|\eps\|_{H^1})
=O(s^{-(K+2)}), \label{est:eps orth2:t1-1} 
\end{multline}
where we used \eqref{fact:algebra} in the fourth equality, and the orthogonal relation \eqref{eq:orth struct} in the last equality.  Similar to \ \eqref{est:eps orth1:t1-2}, we have

\begin{multline}\label{est:eps orth2:t1-2}
\psld{-i\left(\frac{\lambda_s}{\lambda}+b\right)\Lambda\big(P_b+\eps\big)+(1-\gamma_s)\big(P_b+\eps\big)+\big(b_s+b^2-\theta\big)\frac{|y|^2}{4}P_b}{i |y|^2 P_b}\\
=-\left(\frac{\lambda_s}{\lambda}+b\right)\psld{\Lambda P_b}{|y|^2P_b}+O(|\Mod(s)|\norm{\eps}_{H^1})\\
=\left(\frac{\lambda_s}{\lambda}+b\right) \norm{yQ}_2^2 +O(s^{-2}|\Mod(s)|),
\end{multline}
where we used \eqref{est:basic fact1} in the first equality. By \eqref{est:error}, we have

\begin{equation}\label{est:eps orth2:t1-3}
\left| \psld{\Psi_K}{i |y|^2 P}\right|\lesssim s^{-2} |\Mod(s)| + s^{-2(K+2)}.
\end{equation}
By combining \eqref{est:eps orth2:t2}, \eqref{est:eps orth2:t1-1}, \eqref{est:eps orth2:t1-2} and \eqref{est:eps orth2:t1-3}, we have for any $s\in [s_{**}, s_1]$ that
\begin{equation}\label{est:lambda dyn}
\left|\frac{\lambda_s}{\lambda}+b\right| \lesssim s^{-2} |\Mod(s)| + s^{-(K+2)}.
\end{equation}

{\bf As for the orthogonality condition $\psld{\eps}{i \rho_b}=0$.}

Taking time derivative on $\psld{\eps}{i \rho_b}=0$, we obtain 
$$
-\dual{i\eps_s}{\rho_b}= \dual{\eps_s}{i\rho_b}=- \dual{\eps}{ \partial_s( \rho_b)}.
$$
By the definition of $\rho_b$ in \eqref{def:rhob}, we have
\begin{align}
\frac d{d s} (\rho_b) 
=    - i b_s \frac{|y|^2}{4} \rho \  e^{-i \frac b4 |y|^2}. 
\end{align}
By similar estimate as \eqref{est:eps orth1:t2}, 
we have for any $s\in [s_{**}, s_1]$ that
\begin{equation}\label{est:eps orth3:t2}
|\psld{\eps}{ \partial_s( \rho_b)}| 
\lesssim s^{-2}  |\Mod(s)|+ s^{-(K+2)}.
\end{equation}

Next, we consider the term $\dual{i\eps_s}{ \rho_b}$. Similar to  \eqref{est:eps orth1:t1-1}, we computer by \eqref{est:small est:rough} as follows

\begin{align}
& \quad \dual{-\partial^2_y \eps-ib\Lambda\eps+\eps-(f(P_b+\eps)-f(P_b))-\lambda \left( g (P_b+\eps)-g(P_b)\right)}{ \rho_b} \nonumber\\
&=\dual{-\partial^2_y \left(e^{ib\frac{|y|^2}{4}} \eps\right)+e^{ib\frac{|y|^2}{4}}\eps-df(Q) \left(e^{ib\frac{|y|^2}{4}}\eps\right)}{\rho}+O(s^{-2}\|\eps\|_{H^1}) \nonumber\\
&=\dual{L_+\left(e^{ib\frac{|y|^2}{4}} \eps\right)}{\rho}+O(s^{-2}\|\eps\|_{H^1}) \nonumber\\
&=\dual{e^{ib\frac{|y|^2}{4}} \eps}{L_+ \rho}+O(s^{-2}\|\eps\|_{H^1}) \nonumber\\
&=\psld{\eps}{e^{-ib\frac{|y|^2}{4}} |y|^2 Q}+O(s^{-2}\|\eps\|_{H^1}) \nonumber\\
& =  \psld{\varepsilon}{|y|^2P_b}+O(s^{-2}\|\eps\|_{H^1})
=O(s^{-(K+2)}),\label{est:eps orth3:t1-1}
\end{align}
where we used \eqref{fact:algebra} in the fourth equality,  and the orthogonal relation  \eqref{eq:orth struct} in the last equality. Similar to \eqref{est:eps orth1:t1-2}, we have

\begin{multline}\label{est:eps orth3:t1-2}
\psld{-i\left(\frac{\lambda_s}{\lambda}+b\right)\Lambda(P_b+\eps)+(1-\gamma_s)(P_b+\eps)+(b_s+b^2-\theta)\frac{|y|^2}{4}P_b}{ \rho_b}\\
=\left(1-\gamma_s\right)\psld{ P_b}{\rho_b}+O\left(s^{-2}|\Mod(s)|\right)+ O\left(s^{-(K+2)}\right) \\
=\left(1-\gamma_s\right)\psld{ Q}{\rho} +O(s^{-2}|\Mod(s)|)+ O\left(s^{-(K+2)}\right)\\
= \frac12 \left(1-\gamma_s\right) \norm{yQ}^2_2 +O(s^{-2}|\Mod(s)|)+ O\left(s^{-(K+2)}\right),
\end{multline}
where we used \eqref{est:b dyn},  \eqref{est:lambda dyn} in the first equality, and \eqref{est:basic fact2} in the last equality. Last, by \eqref{est:error}, we have

\begin{equation}\label{est:eps orth3:t1-3}
\left| \psld{\Psi_K}{\rho}\right|\lesssim s^{-2} |\Mod(s)| + s^{-2(K+2)}.
\end{equation}
By combining \eqref{est:eps orth3:t2}, \eqref{est:eps orth3:t1-1}, \eqref{est:eps orth3:t1-2} and \eqref{est:eps orth3:t1-3}, we have for any $s\in [s_{**}, s_1]$ that
\begin{equation}\label{est:gam dyn}
\left|1-\gamma_s\right| \lesssim s^{-2} |\Mod(s)| + s^{-(K+2)}.
\end{equation}

By \eqref{est:b dyn}, \eqref{est:lambda dyn} and \eqref{est:gam dyn}, we obtain that
\begin{equation*}
 |\Mod(s)|
\lesssim  s^{-2} |\Mod(s)|  + s^{-(K+2)}, 
\end{equation*}
which implies for any $s\in [s_{**}, s_1]$ that
\begin{equation}\label{est:modul2}
|\Mod(s)|\lesssim s^{-(K+2)}.
\end{equation}

\noindent {\bf Step 2: } Estimate of $\psld{\varepsilon}{P_b}$ on $[s_{**}, s_1]$.
 
On the one hand, by the mass conservation and \eqref{eq:final data}, we have
\begin{equation*}
\norm{u(s)}_2^2=\norm{u(s_1)}_2^2=\norm{P_b(s_1)}_2^2.
\end{equation*}
On the other hand, by the modulation decomposition \eqref{eq:refined modul},  we have
\begin{equation*}
\psld{\eps(s)}{P_b}=\frac12\left(\norm{u(s)}_2^2-\norm{P_b(s)}_2^2-\norm{\eps(s)}_2^2\right)=
-\frac 12 \norm{\eps(s)}_2^2+\frac 12 \Big( \norm{P_b(s_1)}_2^2-\norm{P_b(s)}_2^2\Big).
\end{equation*}
Moreover, by  \eqref{est:Pblamb dmass}, \eqref{est:small est:rough} and \eqref{est:modul2}, we compute the later as follows
\begin{equation*}
\frac {d}{ds}\int_{\R} |P_b|^2 \, dy
\lesssim \lambda(s) \left| \rm Mod(s)\right| + s^{-2(K+2)} \lesssim s^{-(K+4)}. 
\end{equation*}
By integrating over $[s, s_1]$ and combining \eqref{est:small est:rough},  we obtain for any $s\in [s_{**},s_1]$ that 
\begin{equation*}
|\psld{\eps(s)}{P_b}|\lesssim s^{-2K}+ s^{-(K+3)} \lesssim  s^{-(K+3)}.
\end{equation*}
Therefore, we obtain $s_{**}=s_*$ for sufficiently large $s_*$,  and the estimates \eqref{est:modul} and \eqref{est:Pb orth} are proved on 
$[s_*,s_1]$.
\end{proof}

\begin{corollary}\label{cor:Q orth}
	For all $s\in [s_*,s_1]$, then we have
	\begin{equation}\label{est:Q orth}
	|\psld{\eps(s)}{Q}|\lesssim  s^{-(K+1)}.
	\end{equation}
	\end{corollary}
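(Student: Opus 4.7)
The strategy is to bootstrap off the already-established bound $|\psld{\eps(s)}{P_b}|\lesssim s^{-(K+2)}$ from Lemma~\ref{lem:modul est} and to control the discrepancy $Q-P_b$ by brute force, exploiting the smallness of the parameters $b$ and $\lambda$ on $[s_*,s_1]$.

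More precisely, using the decomposition \eqref{def:P} together with \eqref{def:Pb}, I write
\begin{equation*}
P_b - Q = Q\bigl(e^{-ib|y|^2/4}-1\bigr) + \lambda Z\, e^{-ib|y|^2/4},
\qquad
Z = \sum_{(j,k)\in\Sigma_K} b^{2j}\lambda^{k} P_{j,k}^{+} + i\sum_{(j,k)\in\Sigma_K} b^{2j+1}\lambda^{k} P_{j,k}^{-},
\end{equation*}
so that
\begin{equation*}
\psld{\eps(s)}{Q} = \psld{\eps(s)}{P_b} - \psld{\eps(s)}{Q\bigl(e^{-ib|y|^2/4}-1\bigr)} - \psld{\eps(s)}{\lambda Z\, e^{-ib|y|^2/4}}.
\end{equation*}
The first term on the right-hand side is $O(s^{-(K+2)})$ by \eqref{est:Pb orth}, hence negligible.

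For the second term, I would use the pointwise bound $|e^{-ib|y|^2/4}-1|\leq |b||y|^2/4$ together with the Gaussian-type decay of $Q$ to estimate
\begin{equation*}
\bigl\|Q\bigl(e^{-ib|y|^2/4}-1\bigr)\bigr\|_2 \lesssim |b(s)| \cdot \bigl\||y|^2 Q\bigr\|_2 \lesssim |b(s)|.
\end{equation*}
Combining with Cauchy–Schwarz, the bootstrap bound $\|\eps(s)\|_{H^1}< s^{-K}$ from \eqref{est:small est:rough}, and $|b(s)|\approx s^{-1}$ (again from \eqref{est:small est:rough} compared with $b_{\rm app}(s)=2/s$), yields
\begin{equation*}
\bigl|\psld{\eps(s)}{Q(e^{-ib|y|^2/4}-1)}\bigr| \lesssim s^{-K}\cdot s^{-1} = s^{-(K+1)}.
\end{equation*}
For the third term, since each $P_{j,k}^{\pm}\in\mathcal Y$ is bounded in $L^2$ uniformly in $s$, the bootstrap estimates give $\|\lambda Z e^{-ib|y|^2/4}\|_2 \lesssim \lambda(s) \lesssim s^{-2}$, and thus this contribution is $O(s^{-(K+2)})$, which is even better.

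Summing the three contributions gives $|\psld{\eps(s)}{Q}|\lesssim s^{-(K+1)}$, as claimed. The argument is an elementary consequence of the finer estimate on $\psld{\eps}{P_b}$, and the only subtle point is to justify the factor $s^{-1}$ gain in the first remainder via the phase expansion $e^{-ib|y|^2/4}-1$, together with the consistent use of the bootstrap regime \eqref{est:small est:rough}; no additional modulation analysis is required here.
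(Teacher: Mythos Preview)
Your proof is correct and follows essentially the same approach as the paper: write $\psld{\eps}{Q}=\psld{\eps}{P_b}-\psld{\eps}{P_b-Q}$, use \eqref{est:Pb orth} for the first term, and control $P_b-Q$ by the smallness of $|b|+\lambda\lesssim s^{-1}$ together with $\|\eps\|_{H^1}<s^{-K}$. The paper packages the second step into the single pointwise bound $|P_b-Q|\lesssim Q^{1/2}(|b|+\lambda)$, while you split it into the phase contribution and the $\lambda Z$ contribution, but the content is the same.
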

\begin{proof} Note that 
	\begin{equation*}
	\psld{\eps(s)}{Q}=\psld{\eps(s)}{P_b}-\psld{\eps(s)}{P_b-Q}.
	\end{equation*}
	By  \eqref{est:small est:rough}, \eqref{est:Pb orth} and the fact that
	 $|P_b - Q||\lesssim  Q^{\frac 12}\big( |b| + \lambda \big)  \lesssim  Q^{\frac 12} s^{-1}$, we can obtain the result.
	\end{proof}

\subsection{The monotonicity formula: the  Energy-Morawetz estimate }\label{sect:Mono form}After we obtain the dynamics of the modulation parameters in last subsection. Now, we turn to introduce the Energy-Morawetz functional in this subsection, use it in next subsection to improve the estimates about the remainder $\varepsilon$ and modulation parameters $\lambda$, $b$, and then close the bootstrap argument. 

The Energy-Morawetz estimate was firstly introduced to construct minimal mass blow-up solution of the inhomogeneous NLS in \cite{RaS11:NLS:mini sol}, see also \cite{KrLR13:HalfW:nondis, LeMR:CNLS:blp, MaP17:BO:mini sol, MeRS14:NLS:blp}), and recently, it was also successfully applied by B.~Dodson to obtain the global well-posedness and scattering results of the defocusing, nonlinear wave equation in the critical Sobolev space $\dot H^s(\R^3)$ for $\frac12\leq s<1$ in \cite{Dod:NLW:sct1, Dod:NLW:sct2}.

First, we recall the well-known coercivity property of the linearized operators $L_{\pm}$ around $Q$ in $H^1_{\rm rad}(\R)$.
\begin{lemma}\label{lem:Lpm coer}
For any $\eps=\eps_1+i\eps_2\in H^1_{\rm rad}(\R)$, then there exists constant $C>0$ such that  
\begin{equation*}\label{est:Lpm coer}
\dual{L_+\eps_1}{\eps_1}+\dual{L_-\eps_2}{\eps_2}\geq  C \norm{\eps}_{H^1}^2-\frac 1C\left( \psld{\eps_1}{Q}^2+\psld{\eps_1}{|y|^2Q}^2+\psld{\eps_2}{\rho}^2\right).
\end{equation*}
\end{lemma}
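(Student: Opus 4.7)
Plan: This is a classical coercivity inequality for the matrix Hamiltonian linearizing the mass-critical NLS at the ground state $Q$, and I would follow the well-known Weinstein--Martel--Merle strategy: first establish coercivity modulo a finite-dimensional subspace of bad modes, then quantitatively encode that subspace via the three functionals $(\,\cdot\,, Q)_2$, $(\,\cdot\,, |y|^2 Q)_2$, $(\,\cdot\,, \rho)_2$.

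\textbf{Step 1 (spectral preparation).} On $H^1_{\mathrm{rad}}(\R)$, the operator $L_-$ is nonnegative with $\ker L_- = \mathrm{span}(Q)$ and enjoys a spectral gap; the operator $L_+$ has exactly one simple negative eigenvalue (with radial ground state), trivial kernel in the radial class (since its only zero mode $\partial_y Q$ is odd), and its quadratic form on $\{Q, \Lambda Q\}^\perp$ is coercive by the classical Weinstein $L^2$-critical inequality, which uses the crucial cancellation $(L_+ \Lambda Q, \Lambda Q)_2 = -2(Q, \Lambda Q)_2 = 0$ from mass-critical scaling. Together these give $(L_- g, g)_2 \geq c\|g\|_{H^1}^2$ for $g \in H^1_{\mathrm{rad}}$ with $(g, Q)_2 = 0$, and $(L_+ f, f)_2 \geq c\|f\|_{H^1}^2$ for $f \in H^1_{\mathrm{rad}}$ with $(f, Q)_2 = (f, \Lambda Q)_2 = 0$.

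\textbf{Step 2 (decomposition and cross terms).} For radial $\varepsilon_1, \varepsilon_2 \in H^1(\R)$, I would write $\varepsilon_1 = aQ + b\Lambda Q + f$ with $(f, Q)_2 = (f, \Lambda Q)_2 = 0$, and $\varepsilon_2 = cQ + g$ with $(g, Q)_2 = 0$; this decomposition is clean because $(Q, \Lambda Q)_2 = 0$. Using \eqref{fact:algebra} together with $L_+ Q = -4Q^5$ (a consequence of the ground state equation \eqref{eq:grd sol1}), the quadratic form on the two-dimensional piece collapses to $(L_+(aQ+b\Lambda Q), aQ+b\Lambda Q)_2 = -4a^2 \|Q\|_6^6$, since the $b^2$ and $ab$ contributions both vanish. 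Cauchy--Schwarz absorbs the cross term $-8a(Q^5, f)_2$ into $\tfrac{c}{2}\|f\|_{H^1}^2 + Ca^2$, and combining with Step 1 yields
\[
\langle L_+ \varepsilon_1, \varepsilon_1 \rangle + \langle L_- \varepsilon_2, \varepsilon_2 \rangle \geq \tfrac{c}{2} \|f\|_{H^1}^2 + c\|g\|_{H^1}^2 - Ca^2.
\]

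\textbf{Step 3 (inverting the pairing).} It remains to control $a^2, b^2, c^2$ and to estimate $\|\varepsilon\|_{H^1}^2 \lesssim a^2 + b^2 + c^2 + \|f\|_{H^1}^2 + \|g\|_{H^1}^2$ in terms of the three prescribed functionals. Projecting $\varepsilon_1$ onto $Q$ gives $a\|Q\|_2^2 = (\varepsilon_1, Q)_2$; projecting $\varepsilon_2$ onto $\rho$ gives $c(Q, \rho)_2 = (\varepsilon_2, \rho)_2 - (g, \rho)_2$ with $(Q, \rho)_2 = \tfrac12 \|yQ\|_2^2 \neq 0$ by \eqref{est:basic fact2}; and projecting $\varepsilon_1$ onto $|y|^2 Q$ gives $b (\Lambda Q, |y|^2 Q)_2 = (\varepsilon_1, |y|^2 Q)_2 - a(Q, |y|^2 Q)_2 - (f, |y|^2 Q)_2$ with $(\Lambda Q, |y|^2 Q)_2 = -\|yQ\|_2^2 \neq 0$ by a direct integration by parts. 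Since $\rho$ and $|y|^2 Q$ both lie in $L^2(\R)$ thanks to the exponential decay of $Q$, Cauchy--Schwarz and hiding $\|f\|_2^2, \|g\|_2^2$ into the coercive terms from Step 2 at the cost of a small constant yields the claimed inequality.

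The main obstacle I anticipate is the $\Lambda Q$ direction, where the quadratic form of $L_+$ has a second zero (in addition to the negative eigenvalue): it is not directly among the prescribed controls. The resolution is that $\Lambda Q$ is nevertheless \emph{detected} by $(\,\cdot\,, |y|^2 Q)_2$ because $(\Lambda Q, |y|^2 Q)_2 \neq 0$, itself a consequence of the pseudoconformal identity $L_- |y|^2 Q = -4\Lambda Q$ in \eqref{fact:algebra}. Once these non-degeneracies are secured, the proof is routine bookkeeping. An equivalent, slightly cleaner route would be a contradiction--compactness argument: if the inequality failed along a sequence $\varepsilon_n$ with $\|\varepsilon_n\|_{H^1} = 1$, the compact embedding $H^1_{\mathrm{rad}} \hookrightarrow L^p_{\mathrm{loc}}$ combined with the exponential decay of $Q$ would extract a strong $H^1$-limit $\varepsilon_\infty$ whose three control functionals vanish; the limiting Euler--Lagrange equations and \eqref{fact:algebra} would then force $\varepsilon_\infty = 0$, contradicting $\|\varepsilon_\infty\|_{H^1} = 1$.
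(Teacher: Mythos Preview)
The paper does not prove this lemma; it defers to Rapha\"el--Szeftel \cite{RaS11:NLS:mini sol} and the Merle--Rapha\"el series. Your direct decomposition approach is sound, but there is a computational slip in Step~2: the $ab$ cross term does \emph{not} vanish, since $(L_+\Lambda Q, Q)_2 = -2\|Q\|_2^2$ gives $(L_+(aQ+b\Lambda Q), aQ+b\Lambda Q)_2 = -4a^2\|Q\|_6^6 - 4ab\|Q\|_2^2$. This is harmless: write $|ab| \leq \eta b^2 + C_\eta a^2$, use Step~3 to bound $b^2 \lesssim (\varepsilon_1, |y|^2Q)_2^2 + a^2 + \|f\|_2^2$, and absorb the resulting $\eta C\|f\|_2^2$ into $\tfrac{c}{2}\|f\|_{H^1}^2$ by choosing $\eta$ small. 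The same hiding trick handles the $(g,\rho)_2$ and $(f,|y|^2Q)_2$ residuals in Step~3.

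The contradiction--compactness route you sketch at the end is the one actually used in the cited references and is cleaner precisely because it bypasses this bookkeeping: one never computes the $ab$ term, only checks that the constraints $(\varepsilon_1, Q)_2 = (\varepsilon_1, |y|^2Q)_2 = (\varepsilon_2, \rho)_2 = 0$ together with the limiting Euler--Lagrange relations and \eqref{fact:algebra} force $\varepsilon_\infty = 0$. Your direct route, once the $ab$ term is patched, has the virtue of making the constant $C$ in principle trackable.
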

\begin{proof}
Please refer to Lemma $3.2$ in \cite{RaS11:NLS:mini sol} for more details (See also \cite{MeR04:NLS:blp, MeR05:NLS:blp, MeR06:NLS:blp, We86:NLS:stab}).
	\end{proof}

Now, we define the  linearized energy functional of the remainder $\eps$ according to \eqref{eq:epsilon} as follows
\begin{multline*}
H(s,\eps):=\frac12\norm{\partial_y \eps}_2^2+\frac12\norm{\eps}_2^2-\int_{\R} \Big(F(P_b+\eps)-F(P_b)-dF(P_b)\eps\Big)dy\\
-\lambda \int_{\R} \Big(G(P_b+\eps)-G(P_b)-dG(P_b)\eps\Big)dy.
\end{multline*}
Note that  the equation \eqref{eq:epsilon} can be rewrited as 
\begin{equation}\label{eq:eps v2}
i\eps_s-{D_\eps} H(s,\eps)+\ModOp(s)P_b-i\frac{\lambda_s}{\lambda} \Lambda\eps
+(1-\gamma_s)\eps
+e^{-ib\frac{|y|^2}{4}}\Psi_K=0,
\end{equation}
where  $D_\eps H(s,\eps)$ denotes the Fr\'echet derivative of the functional $H(s,\eps)$ with respect to $\eps$ and
\begin{equation*}
\ModOp(s)P_b =-i\left(\frac{\lambda_s}{\lambda}+b\right)\Lambda P_b
+(1-\gamma_s)P_b
+\big(b_s+b^2-\theta\big)\frac{|y|^2}{4}P_b.
\end{equation*}

First, as the consequence of Lemma \ref{lem:Lpm coer}, we have the coercivity property for the energy functional $H(s,\varepsilon)$ under the orthogonality conditions of $\varepsilon$ (see \eqref{eq:orth struct}, \eqref{est:Q orth}) as follows.

\begin{lemma}\label{lem:H coer}
	For all $s\in[s_*,s_1]$,  then we have
\begin{equation*}
	H(s,\eps)\gtrsim 
	\norm{\eps}_{H^1}^2+O(s^{-2(K+1)}).
\end{equation*}
\end{lemma}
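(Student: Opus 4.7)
The plan is to show by Taylor expansion that $H(s,\varepsilon)$ coincides, up to controlled errors, with the standard linearized quadratic form $\tfrac12\langle L_+\varepsilon_1,\varepsilon_1\rangle+\tfrac12\langle L_-\varepsilon_2,\varepsilon_2\rangle$, and then to invoke Lemma \ref{lem:Lpm coer} together with the orthogonality relations \eqref{eq:orth struct} and the near-orthogonality from Corollary \ref{cor:Q orth}. The perturbations come from three sources: the Dirac delta contribution, the replacement of $P$ by $Q$, and the phase factor $e^{-ib|y|^2/4}$ encoded in $P_b$.

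Since $G(u)=\tfrac12\mu\delta|u|^2$ is exactly quadratic, the $G$-difference contributes precisely $\tfrac{\lambda\mu}{2}|\varepsilon(0)|^2$. For $F(u)=\tfrac16|u|^6$, I would write $F(P_b+\varepsilon)-F(P_b)-dF(P_b)\varepsilon=\tfrac12 d^2F(P_b)(\varepsilon,\varepsilon)+R_F$ with $|R_F|\lesssim|P_b|^3|\varepsilon|^3+|\varepsilon|^6$; by the 1D Sobolev embedding $H^1\hookrightarrow L^\infty$ and the bootstrap $\|\varepsilon\|_{H^1}\lesssim s^{-K}$, $\int|R_F|\,dy\lesssim\|\varepsilon\|_{H^1}^3$ is negligible. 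Rotational invariance of $F$ yields pointwise $d^2F(P_b)(\varepsilon,\varepsilon)=d^2F(P)(\tilde\varepsilon,\tilde\varepsilon)$, where $\tilde\varepsilon:=e^{ib|y|^2/4}\varepsilon$. Using the decay bound $|P-Q|\lesssim\lambda(1+|y|)^\kappa Q$ coming from Proposition \ref{prop:profile}, the replacement $|P|^4\mapsto Q^4$ costs $O(\lambda\|\varepsilon\|_{H^1}^2)$. To return from $\tilde\varepsilon$ to $\varepsilon$, I expand the phase: $|\cos(b|y|^2/4)-1|\lesssim b^2 y^4$, $|\sin(b|y|^2/4)|\lesssim|b|y^2$, and $\||y|^nQ^4\|_{L^1}<\infty$ (from $Q\in\mathcal Y$) together yield $\int Q^4\tilde\varepsilon_j^2\,dy=\int Q^4\varepsilon_j^2\,dy+O(|b|\|\varepsilon\|_{H^1}^2)$. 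Altogether,
$$H(s,\varepsilon)=\tfrac12\langle L_+\varepsilon_1,\varepsilon_1\rangle+\tfrac12\langle L_-\varepsilon_2,\varepsilon_2\rangle-\tfrac{\lambda\mu}{2}|\varepsilon(0)|^2+O\big((|b|+\lambda+\|\varepsilon\|_{H^1})\|\varepsilon\|_{H^1}^2\big).$$

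To apply Lemma \ref{lem:Lpm coer} I then need to check the three kernel-projections are small. Since $Q$ is real, $(\varepsilon,Q)=(\varepsilon_1,Q)$, so Corollary \ref{cor:Q orth} immediately yields $|(\varepsilon_1,Q)|\lesssim s^{-(K+1)}$. Expanding the phase in the orthogonality relations $(\varepsilon,|y|^2P_b)=0$ and $(\varepsilon,i\rho_b)=0$ and again using $|P-Q|\lesssim\lambda(1+|y|)^\kappa Q$ will produce $|(\varepsilon_1,|y|^2Q)|\lesssim(|b|+\lambda)\|\varepsilon\|_{H^1}$ and $|(\varepsilon_2,\rho)|\lesssim|b|\|\varepsilon\|_{H^1}$, both of size $O(s^{-1}\|\varepsilon\|_{H^1})$. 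Lemma \ref{lem:Lpm coer} then gives $\langle L_+\varepsilon_1,\varepsilon_1\rangle+\langle L_-\varepsilon_2,\varepsilon_2\rangle\gtrsim\|\varepsilon\|_{H^1}^2-O(s^{-2}\|\varepsilon\|_{H^1}^2+s^{-2(K+1)})$. The delta-potential term obeys $\lambda\mu|\varepsilon(0)|^2\lesssim\lambda\|\varepsilon\|_{H^1}^2\lesssim s^{-2}\|\varepsilon\|_{H^1}^2$ by 1D Sobolev and the bootstrap $\lambda\lesssim s^{-2}$, and is absorbed by the coercive main term once $s_*$ is taken large enough, yielding $H\gtrsim\|\varepsilon\|_{H^1}^2+O(s^{-2(K+1)})$.

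The main obstacle is showing that the phase factor $e^{-ib|y|^2/4}$ in $P_b$ only perturbs the coercive quadratic form by an $O(|b|)$ amount. The crucial point is that the Hessian of $F$ is weighted by the exponentially decaying $|P|^4\approx Q^4$, which absorbs the polynomial growth $|y|^2,y^4$ produced by the phase expansion. This motivates keeping the kinetic term $\|\partial_y\varepsilon\|_2^2$ untouched rather than conjugating $\partial_y$ through the phase; doing so avoids the delicate ``magnetic'' cross terms of type $\int y\,\mathrm{Im}(\bar{\tilde\varepsilon}\,\partial_y\tilde\varepsilon)\,dy$ and the weighted norm $\|y\tilde\varepsilon\|_2$, neither of which is a priori controlled in $H^1(\R)$.
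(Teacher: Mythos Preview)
Your proposal is correct and follows essentially the same approach as the paper: expand the $F$-contribution to second order, replace $P_b$ by $Q$ at cost $O((|b|+\lambda)\|\eps\|_{H^1}^2)$ using the exponential decay of $Q^4$, control the $G$-term by $O(\lambda\|\eps\|_{H^1}^2)$, and invoke Lemma~\ref{lem:Lpm coer} after translating the orthogonality relations \eqref{eq:orth struct} and Corollary~\ref{cor:Q orth} into bounds on $(\eps_1,Q)$, $(\eps_1,|y|^2Q)$, $(\eps_2,\rho)$. The paper compresses your phase-rotation and $P\to Q$ steps into the single pointwise bound
\[
\Big|F(P_b+\eps)-F(P_b)-dF(P_b)\eps-\tfrac{5}{2}Q^4\eps_1^2-\tfrac12 Q^4\eps_2^2\Big|\lesssim e^{-|y|/2}|\eps|^3+|\eps|^6+|\eps|^2(|b|+\lambda),
\]
but the content is identical.
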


\begin{proof}
Firstly, from  \eqref{eq:orth struct}, \eqref{est:small est:rough} and \eqref{est:Q orth},  the following estimates hold:
	\begin{align*}
	\psld{\eps}{|y|^2 Q}&=\psld{\eps}{|y|^2P_b}+O(|b| \norm{\eps}_2)+O(\lambda \norm{\eps}_2) =O(s^{-1}\norm{\eps}_{H^1}) ,\\
	\psld{\eps}{i\rho}  &  =\psld{\eps}{i\rho_b} +O(|b|\norm{\eps}_2)=O(s^{-1}\norm{\eps}_{H^1}),\\
	\psld{\eps}{ Q}&        =O(s^{-(K+1)}),
	\end{align*}
where we used the fact that  $|P_b - Q||\lesssim  Q^{\frac 12}\big( |b| + \lambda \big)$ in the first equality.

Next, If we denote $\eps = \eps_1+ i\eps_2$, then 
	$$\left|  F(P_b+\eps)-F(P_b)-dF(P_b)\eps 
	- \frac{1}{2} 5 Q^{4} \eps_1^2 - \frac 12 Q^{4} \eps_2^2\right|
	\lesssim e^{-\frac 12 |y|} |\eps|^3 +|\eps|^{6}+ |\eps|^2 (|b|+\lambda).
	$$
	Thus, from \eqref{est:small est:rough},  we have
	$$\left| \int_{\R} \left(F(P_b+\eps)-F(P_b)-dF(P_b)\eps 
	-\frac{1}{2} 5Q^{4} \eps_1^2 - \frac 12Q^{4} \eps_2^2\right)\, dy\right|
	\lesssim 
	O(s^{-1}\norm{\eps}_{H^1}^2),
	$$
	and
	\begin{equation*}
	\lambda \int_{\R} \Big(G(P_b+\eps)-G(P_b)-dG(P_b)\eps\Big)dx= O(s^{-2}\norm{\eps}_{H^1}^2).
	\end{equation*}
Therefore, we have
	$$
	\left|H(s,\eps) - \frac 12 \dual{L_+\eps_1}{\eps_1}-\frac 12 \dual{L_-\eps_2}{\eps_2}\right|\lesssim 
	O(s^{-1}\norm{\eps}_{H^1}^2), 
	$$
which together with the coercivity properties of   $L_{\pm}$ in Lemma \ref{lem:Lpm coer} implies the result.
\end{proof}

For future reference, we also need the following localized coercivity property.
\begin{lemma}\label{lem:locH:coer}
	There exists $A_0>1$ such that for any $A>A_0$, we have
	\begin{equation*}
	\frac12\int_{\R}\phantom{}  \partial^2_y\phi_A \cdot |\partial_y \eps|^2\, dy+\frac12\norm{\eps}_2^2-\int_{\R} \Big(F(P_b+\eps)-F(P_b)-dF(P_b)\eps\Big)\, dy\gtrsim \norm{\eps}^2_{2}+O(s^{-2(K+1)}),
	\end{equation*}
	where the localized function $\phi_A$ is defined in Section \ref{sect:notation},
\end{lemma}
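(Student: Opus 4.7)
The plan is to mirror the proof of Lemma \ref{lem:H coer} with an additional spatial localization: since the kinetic term now carries the weight $\partial_y^2\phi_A = \phi''(\cdot/A)$, which equals $1$ on $\{|y|\le A\}$ and is positive (but small) outside, we only have full $H^1$-coercivity on the ball and must settle for the weaker $L^2$-lower bound on the complement. This is exactly what the statement asks for, and it fits the argument because the linearized potential $5Q^{4}\varepsilon_1^2+Q^{4}\varepsilon_2^2$ is exponentially localized near the origin, so the $L_\pm$ coercivity only needs to be applied in the core.

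Concretely, fix a smooth radial cutoff $\chi_A$ with $\chi_A\equiv 1$ on $|y|\le A/2$, $\operatorname{supp}\chi_A\subset\{|y|\le A\}$, $|\chi_A'|\lesssim A^{-1}$, and set $\tilde\varepsilon=\chi_A\varepsilon$. Since $\chi_A^2\le \phi''(y/A)$ pointwise and $\partial_y\tilde\varepsilon = \chi_A\partial_y\varepsilon+\chi_A'\varepsilon$, the elementary bound $(a-b)^2\ge(1-\delta)a^2-C(\delta)b^2$ yields
$$
\int \phi''(y/A)|\partial_y\varepsilon|^2\,dy\ge (1-\delta)\|\partial_y\tilde\varepsilon\|_2^2 - C(\delta)A^{-2}\|\varepsilon\|_2^2
$$
for any $\delta\in(0,1)$. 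Next, Taylor-expand the nonlinear $F$-integral as in the proof of Lemma \ref{lem:H coer}, with remainder bounded by $O(s^{-K}\|\varepsilon\|_{H^1}^2)+O(s^{-1}\|\varepsilon\|_2^2)$ (the first from the cubic-and-higher terms via Sobolev embedding, the second from the $|P_b|^{4}-Q^{4}=O((|b|+\lambda)Q^{7/2})$ correction), and then substitute $\int Q^4\varepsilon_j^2=\int Q^4\tilde\varepsilon_j^2+O(e^{-2A}\|\varepsilon\|_2^2)$ using the exponential decay of $Q^4$. Decomposing $\|\varepsilon\|_2^2=\|\tilde\varepsilon\|_2^2+\int(1-\chi_A^2)\varepsilon^2\,dy$, the left-hand side is bounded below by a positive multiple of $\dual{L_+\tilde\varepsilon_1}{\tilde\varepsilon_1}+\dual{L_-\tilde\varepsilon_2}{\tilde\varepsilon_2}$ plus the leftover mass $\tfrac12\int(1-\chi_A^2)\varepsilon^2$, minus absorbable error terms.

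Finally, apply Lemma \ref{lem:Lpm coer} to $\tilde\varepsilon$: the orthogonality residues $\psld{\tilde\varepsilon_1}{Q}$, $\psld{\tilde\varepsilon_1}{|y|^2Q}$, $\psld{\tilde\varepsilon_2}{\rho}$ split as the genuine orthogonality defect of $\varepsilon$ from Lemma \ref{lem:modul} and Corollary \ref{cor:Q orth} plus a tail $O(e^{-A/2}\|\varepsilon\|_2)$ from $\psld{\varepsilon}{(1-\chi_A)g}$ with $g\in\{Q, |y|^2 Q, \rho\}$ exponentially decaying. After squaring, they contribute $O(s^{-2(K+1)}) + O(s^{-2}\|\varepsilon\|_{H^1}^2) + O(e^{-A}\|\varepsilon\|_2^2)$. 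Combining the coercivity $\gtrsim \|\tilde\varepsilon\|_{H^1}^2$ with the leftover mass via $\|\tilde\varepsilon\|_2^2 + \int(1-\chi_A^2)\varepsilon^2 = \|\varepsilon\|_2^2$ gives the claimed $L^2$-lower bound once $A\ge A_0$ is chosen large enough so that $C(\delta)A^{-2}$ and $Ce^{-A/2}$ are absorbed into half the coercivity constant. The main obstacle is the simultaneous bookkeeping of four error scales --- $A^{-2}$ (commutator), $e^{-A/2}$ (potential and orthogonality tails), $s^{-1}\|\varepsilon\|_{H^1}^2$ (modulation/nonlinear), and $s^{-2(K+1)}$ (genuine orthogonality defect) --- together with the correct order of limits: $\delta$ small first, then $A$ large depending on $\delta$, and finally $s$ large (crucially using $K\ge 7$ so that the cubic-error $s^{-3K}$ is dominated by the claimed $s^{-2(K+1)}$).
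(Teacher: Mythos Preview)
The paper does not give its own proof of this lemma; it simply refers the reader to \cite{RaS11:NLS:mini sol}. Your sketch is correct and is precisely the standard localization argument one finds there: cut off by $\chi_A$ with $\chi_A^2\le \partial_y^2\phi_A$, transfer the $L_\pm$ coercivity (Lemma~\ref{lem:Lpm coer}) to $\tilde\varepsilon=\chi_A\varepsilon$, and recover the full $\|\varepsilon\|_2^2$ from the leftover mass $\int(1-\chi_A^2)|\varepsilon|^2$, while the exponential decay of $Q$, $|y|^2Q$, $\rho$ controls both the potential tail and the orthogonality-defect tails.

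Two minor remarks on the write-up. First, the closing comment about ``$K\ge 7$ so that $s^{-3K}$ is dominated by $s^{-2(K+1)}$'' is not the sharp constraint here: that inequality holds already for $K\ge 2$, and the restriction $K\ge 7$ in the paper is needed elsewhere (e.g.\ in closing the bootstrap in Proposition~\ref{prop:bootstrap}), not for this lemma. Second, the phrase ``finally $s$ large'' is slightly misleading: $A_0$ is fixed once and for all, and the estimate then holds uniformly on the whole bootstrap interval $[s_*,s_1]$; the $s^{-1}\|\varepsilon\|_2^2$ and $s^{-2}\|\varepsilon\|_{H^1}^2$ errors are absorbed because $s_0$ has already been taken large and $\|\varepsilon\|_{H^1}\lesssim s^{-K}$ by \eqref{est:small est:rough}, not by a further limit in $s$.
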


\begin{proof}
	 Please refer to \cite{RaS11:NLS:mini sol} for details.
\end{proof}

Second, we compute the time variation of the linearized energy $H(s,\varepsilon)$  as follows.

\begin{lemma}\label{lem:dsH} For all $s\in[s_*,s_1]$,  we have
	\begin{equation}\label{eq:H:ds}
	\frac{d}{d s}\Big(H(s,\eps(s))\Big)=
	\frac{\lambda_s}{\lambda} \Big( \norm{\partial_y \eps}_2^2-\dual{f(P_b+\eps)-f(P_b)}{\Lambda\eps} \Big)
	+O\big(s^{-(2K+3)}\big)+O(s^{-2}\norm{\eps}_{H^1}^2).
	\end{equation}
\end{lemma}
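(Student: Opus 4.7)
The strategy is a direct chain-rule computation combined with substitution of the $\eps$-equation \eqref{eq:eps v2}. To begin, I would write
\begin{equation*}
\frac{d}{ds} H(s,\eps(s)) = (\partial_s H)(s,\eps) + \dual{D_\eps H(s,\eps)}{\eps_s},
\end{equation*}
where $\partial_s H$ differentiates only the explicit $s$-dependence through $P_b(s,\cdot)$ and the coefficient $\lambda(s)$ in front of the $G$-integral. Since the integrands $F(P_b+\eps) - F(P_b) - dF(P_b)\eps$ and its $G$-analogue are quadratic in $\eps$, and since $\partial_s P_b = O(|\Mod(s)| + b^2+\lambda)$ with exponential spatial decay (obtained from \eqref{est:dsPb}--\eqref{est:dsP} after inserting the modulation equations), the bootstrap bounds \eqref{est:small est:rough} together with Sobolev embedding yield $|\partial_s H| \lesssim s^{-2}\|\eps\|_{H^1}^2$.

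Next, from \eqref{eq:eps v2} I would solve for
\begin{equation*}
\eps_s = -i D_\eps H + i\ModOp(s) P_b + \frac{\lambda_s}{\lambda}\Lambda\eps + i(1-\gamma_s)\eps + i e^{-ib|y|^2/4}\Psi_K.
\end{equation*}
Pairing with $D_\eps H$ and using the identity $\dual{D_\eps H}{-iD_\eps H}=0$, the dynamics of $H$ splits into four pairings, the main one being $\tfrac{\lambda_s}{\lambda}\dual{D_\eps H}{\Lambda\eps}$. Writing
\begin{equation*}
D_\eps H = -\partial_y^2 \eps + \eps - [f(P_b+\eps)-f(P_b)] - \lambda[g(P_b+\eps)-g(P_b)],
\end{equation*}
and using the one-dimensional Pohozaev identities $\dual{-\partial_y^2 \eps}{\Lambda\eps} = \|\partial_y\eps\|_2^2$, $\dual{\eps}{\Lambda\eps}=0$, together with the elementary computation $\lambda\dual{g(P_b+\eps)-g(P_b)}{\Lambda\eps} = \tfrac{\lambda\mu}{2}|\eps(0)|^2 = O(s^{-2}\|\eps\|_{H^1}^2)$ (via $\Lambda\eps(0) = \tfrac12 \eps(0)$ and Sobolev), I recover exactly the main term of \eqref{eq:H:ds} up to an $O(s^{-3}\|\eps\|_{H^1}^2)$ remainder absorbed in the errors.

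Third, the remaining three pairings (together with $\partial_s H$) are treated as errors. Using the modulation bound $|\Mod(s)|\lesssim s^{-(K+2)}$ from Lemma \ref{lem:modul est}, the weighted pointwise estimate $\sup_y e^{|y|/2}|\Psi_K|\lesssim s^{-(K+2)}$ from \eqref{est:error}, and the bootstrap $\|\eps\|_{H^1}\lesssim s^{-K}$, I would handle each of $\dual{D_\eps H}{i\ModOp(s)P_b}$, $(1-\gamma_s)\dual{D_\eps H}{i\eps}$, and $\dual{D_\eps H}{ie^{-ib|y|^2/4}\Psi_K}$. For the first and third pairings the key step is to integrate by parts, transferring the linear part $-\partial_y^2 + 1 - df(P_b)$ of $D_\eps H$ off $\eps$ onto the smooth, exponentially-decaying test function; this reduces the pairing to a form $\dual{\eps}{\text{smooth}}$ controlled in $L^2$, and Young's inequality packages the result within $O(s^{-(2K+3)}) + O(s^{-2}\|\eps\|_{H^1}^2)$. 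For the middle pairing I exploit $\dual{-\partial_y^2 \eps + \eps}{i\eps}=0$ to reduce it to purely nonlinear contributions of size $\|\eps\|_{H^1}^2$, which after multiplication by $|1-\gamma_s|\lesssim s^{-(K+2)}$ is comfortably $O(s^{-(2K+3)})$.

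\textbf{The main difficulty.} The delicate point is extracting the sharp exponent $s^{-(2K+3)}$ rather than the naive $s^{-(2K+2)}$ that a plain Cauchy--Schwarz bound $|\Mod|\cdot \|\eps\|_{H^1}$ produces. The missing factor of $s^{-1}$ has to be squeezed out of the orthogonality relations \eqref{eq:orth struct} together with Corollary \ref{cor:Q orth}: after transferring the linear operator off $\eps$, the resulting smooth function is, up to corrections of size $O(|b|+\lambda)$, a linear combination of the directions $Q$, $\Lambda Q$, $|y|^2 Q$ and $\rho$ to which $\eps$ is either exactly or nearly orthogonal, so the leading contributions cancel and one is left with an $O(|b|+\lambda)\lesssim s^{-1}$ multiplier that supplies exactly the missing power. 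Keeping track of this cancellation uniformly across the three error pairings is the main technical task.
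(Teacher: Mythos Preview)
Your proposal is correct and follows essentially the same approach as the paper: the same chain-rule decomposition, the same extraction of the main term via $\dual{-\partial_y^2\eps}{\Lambda\eps}=\|\partial_y\eps\|_2^2$ and $\dual{\eps}{\Lambda\eps}=0$, and the same use of the orthogonality relations \eqref{eq:orth struct}, \eqref{est:Pb orth} to gain the extra $s^{-1}$ in the $\ModOp(s)P_b$ pairing.

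One small simplification worth noting: you do \emph{not} need to transfer the operator or invoke orthogonality for the $\Psi_K$ pairing. The bound \eqref{est:error} already carries an extra factor of $\lambda\approx s^{-2}$ (the term is $\lambda|\Mod(s)|+(b^2+\lambda)^{K+2}$, not $|\Mod(s)|$), so a crude estimate gives $\big|\dual{D_\eps H}{i\Psi_K e^{-ib|y|^2/4}}\big|\lesssim \|\eps\|_{H^1}\cdot s^{-(K+4)}$, and then Young's inequality splits this as $s^{-(2K+3)}+s^{-5}\|\eps\|_{H^1}^2$. The orthogonality cancellation is genuinely required only for $\dual{D_\eps H}{i\ModOp(s)P_b}$, where the paper computes the three components $\dual{D_\eps H}{\Lambda P_b}$, $\dual{D_\eps H}{iP_b}$, $\dual{D_\eps H}{i\tfrac{|y|^2}{4}P_b}$ separately and reduces each to $O(s^{-(K+1)})$ via \eqref{fact:algebra} before multiplying by $|\Mod(s)|\lesssim s^{-(K+2)}$.
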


\begin{proof}
We compute the  time derivative for $H$ as follows.
	\begin{equation}\label{eq:dsH}
	\frac{d}{d s}\Big(H(s,\eps(s))\Big)= D_s H(s,\eps) + \dual{D_\eps H(s,\eps)}{\eps_s},
	\end{equation}
	where $D_s H(s,\eps) $ denotes differentiation of the functional $H(s,\eps) $ with respect to $s$. Therefore, we have
	\begin{align*}
	D_s H(s,\eps)  =  &  - \int_{\R} (P_b)_s \Big( f(P_b+\eps) - f(P_b) - df(P_b)\eps \Big) \, dy
	-\lambda \int_{\R} (P_b)_s \Big( g(P_b+\eps)-g(P_b) - dg(P_b)\eps\Big) \, dy\\
	&- \lambda_s \ \int_{\R} \Big( G(P_b+\eps) - G(P_b) - dG(P_b)\eps\Big)\, dy.
	\end{align*}
	Note that 
	$$e^{ i \frac {b |y|^2}{4}} (P_b)_s  =  P_s  - i b_s \frac {|y|^2}{4} P 
	=  P_s - i \left(b_s +b^2 - \theta \right) \frac {|y|^2}{4} P +i\left(b^2 - \theta \right) \frac {|y|^2}{4} P.
	$$
	By \eqref{est:small est:rough}, \eqref{est:dsP} and Lemma \ref{est:modul}, we obtain
	$$
	|(P_b)_s |  \lesssim \Big(  \big|{\rm Mod(s)}\big| + b^2 + \lambda \Big ) Q^{\frac 12} \lesssim s^{-2} e^{-\frac{|y|}{2}},  $$
and 
	$$ \left|\lambda_s \right| =|\lambda|   \left|\frac {\lambda_s}{\lambda}\right| \lesssim|\lambda|  \Big( \big|{\rm Mod(s) }\big|+ |b| \Big)   \lesssim  s^{-3}.
	$$
	Thus, we have 
	\begin{equation}\label{est:dsH1}
	| D_s H(s,\eps) | \lesssim s^{-2} \|\eps\|_{H^1}^2.
	\end{equation}

	Now, we compute $\dual{D_\eps H(s,\eps)}{\eps_s}$.	By \eqref{eq:eps v2} and the fact that $\dual{i{D_\eps} H(s,\eps)}{{D_\eps}H(s,\eps)}=0$, we have
	\begin{multline}\label{eq:depsH}
	\dual{{D_\eps} H(s,\eps)}{\eps_s}
	=
	\dual{{D_\eps} H(s,\eps)}{i \ModOp(s)P_b}+
	\frac{\lambda_s}{\lambda} \dual{{D_\eps} H(s,\eps)}{\Lambda\eps}\\
	+(1-\gamma_s)\dual{{D_\eps} H(s,\eps)}{i \eps}
	+ \dual{{D_\eps} H(s,\eps)}{i\Psi_K  e^{-ib\frac{|y|^2}{4}}}.
	\end{multline}
By similar estimates as those in \eqref{est:struct1} \eqref{est:struct2},  \eqref{est:struct3} and  \eqref{est:eps orth1:t1-1}, we have
	\begin{multline*}
	{D_\eps} H(s,\eps)
	=
	-\partial^2_y \eps+\eps-\big(f(P_b+\eps)-f(P_b)\big)-\lambda \big(g(P_b+\eps)-g(P_b)\big)
	\\
	=e^{-ib\frac{|y|^2}{4}}\left(\big(-\partial^2_y+1-df(Q)\big)\left(e^{ib\frac{|y|^2}{4}}\eps\right) \right)+ib\Lambda\eps+b^2\frac{|y|^2}{4}\eps+ O(s^{-2} |\eps|).
	\end{multline*}
	Therefore, by the orthogonal relations in \eqref{eq:orth struct}, \eqref{est:Pb orth} and the similar estimate as in \eqref{est:eps orth1:t1-1}, we have
	\begin{align}
	\dual{{D_\eps} H(s,\eps)}{\Lambda P_b} & \nonumber\\
	= & \dual{\big(-\partial^2_y+1-df(Q)\big)\left(e^{ib\frac{|y|^2}{4}}\eps\right)}{\Lambda Q-ib\frac{|y|^2}{2}Q} + b \dual{i\Lambda\eps}{\Lambda P_b} + O(s^{-2}\norm{\eps}_2) \nonumber\\
	=& -2\psld{\eps}{ P_b}+2b\psld{\eps}{i \Lambda P_b} + b \dual{i\Lambda\eps}{\Lambda P_b}  + O(s^{-2}\norm{\eps}_2) \nonumber
	\\
	=& -2\psld{\eps}{P_b}+2b\psld{\eps}{i\Lambda P_b}+ O(|b|\norm{\varepsilon}_{H^1})+  O(s^{-2}\norm{\eps}_2)=O(s^{-(K+1)}), \label{est:d Mod1}
	\end{align}
and
	\begin{multline}\label{est:d Mod2}
	\dual{{D_\eps} H(s,\eps)}{ iP_b} \\
	= \dual{\big(-\partial^2_y+1-df(Q)\big)\left(e^{ib\frac{|y|^2}{4}}\eps\right)}{iQ} + b \dual{i\Lambda\eps}{i P_b} + O(s^{-2}\norm{\eps}_2) \\
	=\dual{L_-\left(e^{ib\frac{|y|^2}{4}}\eps\right)}{i Q} - b \dual{\eps}{\Lambda P_b} +  O(s^{-2}\norm{\eps}_2)
	\\
	=O(|b|\norm{\varepsilon}_2)+O(s^{-2}\norm{\eps}_2)=O(s^{-(K+1)}),
	\end{multline}
	and  
	\begin{multline}\label{est:d Mod3}
	\dual{{D_\eps} H(s,\eps)}{ i\frac{|y|^2}{4}P_b}\\
	=\dual{\big(-\partial^2_y+1-df(Q)\big)\left(e^{ib\frac{|y|^2}{4}}\eps\right)}{i\frac{|y|^2}{4}Q} + b \dual{i\Lambda\eps}{i \frac{|y|^2}{4} P_b} + O(s^{-2}\norm{\eps}_2) \\
	= \dual{L_-\left(e^{ib\frac{|y|^2}{4}}\eps\right)}{i\frac{|y|^2}{4}Q} + O(|b|\norm{\varepsilon}_2) + O(s^{-2}\norm{\eps}_2) 
\\	=-\psld{\eps}{i \Lambda P_b}+O(s^{-1}\norm{\eps}_2)=O(s^{-(K+1)}).
	\end{multline}
By combining \eqref{est:d Mod1}, \eqref{est:d Mod2}, \eqref{est:d Mod3} 	with Lemma \ref{lem:modul est}, we obtain
\begin{equation}\label{est:d Mod}
	\dual{{D_\eps} H(s,\eps)}{i \ModOp(s)P_b}=O(s^{-(2K+3)}).
	\end{equation}

	Next, we have
	\begin{equation}\label{eq:d leps}
\dual{{D_\eps} H(s,\eps)}{\Lambda\eps}=
	\dual{-\partial^2_y \eps+\eps-\Big(f(P_b+\eps)-f(P_b)\Big)-\lambda  \Big(g(P_b+\eps)-g(P_b)\Big)}{\Lambda\eps}.
	\end{equation}
By \eqref{est:small est:rough} and simple computations, we have
	\begin{equation*}
	\dual{-\partial^2_y \eps}{\Lambda\eps}=\norm{\partial_y \eps}_2^2,\quad \dual{ \eps}{\Lambda\eps}=0,
	\end{equation*}
	and 
	$$
	\left|\dual{\lambda \big(g(P_b+\eps)-g(P_b)\big)}{\Lambda\eps}\right|\lesssim O(s^{-2}\norm{\eps}^2_{H^1}).
	$$
	Thus,
\begin{equation}\label{est:d leps}
	\frac{\lambda_s}{\lambda} \dual{{D_\eps} H(s,\eps)}{\Lambda\eps}=
	\frac{\lambda_s}{\lambda}\Big(\norm{\partial_y \eps}_2^2-\dual{f(P_b+\eps)-f(P_b)}{\Lambda\eps}\Big)+O\big(s^{-3} \|\eps\|_{H^1}^2\big).
	\end{equation}
	
	For the third term in the right-hand side of \eqref{eq:depsH}, we have
	\begin{multline}\label{est:d esp}
	|(1-\gamma_s)\dual{{D_\eps} H(s,\eps)}{i\eps}|=\Big|(1-\gamma_s)\dual{\Big(f(P_b+\eps)-f(P_b)\Big)+\lambda \Big(g(P_b+\eps)-g(P_b)\Big)}{\eps}\Big|\\
	\lesssim
	|\Mod(s)|\Big(
	\norm{\eps}_2^2+\norm{\eps}_{H^1}^{6}\Big)=O(s^{-4}\norm{\eps}_{H^1}^2).
	\end{multline}
	
	Finally, by  \eqref{est:error} , \eqref{est:small est:rough} and Lemma~\ref{lem:modul est},  the fourth term in the right-hand side of  \eqref{eq:depsH}
	is estimated by  
	\begin{multline}\label{est:d psi}
	\left|\dual{{D_\eps} H(s,\eps)}{i\Psi_K e^{-ib\frac{|y|^2}{4}}}\right| 
	\lesssim \left(\norm{\varepsilon}_{H^1}+ \norm{\varepsilon}_{H^1}^5\right) \Big(\lambda \big|{\rm Mod(s)}\big| + (b^2 + \lambda)^{K+2}\Big)
	\\
	\leq O(s^{-(K+4)}\norm{\eps}_{H^1})
	\leq O(s^{-(2K+3)})+O(s^{-5}\norm{\eps}_{H^1}^2).
	\end{multline}
	
By  \eqref{eq:dsH},  \eqref{est:dsH1}, \eqref{eq:depsH},  \eqref{est:d Mod}, \eqref{eq:d leps}, \eqref{est:d leps}, \eqref{est:d esp} and \eqref{est:d psi},  we can complete the proof of Lemma.
\end{proof}

Note that as in \cite{RaS11:NLS:mini sol}, the time derivative of the linearized energy functional $H(s,\varepsilon)$ in \eqref{eq:H:ds} cannot be well controlled because of the lack of the good estimate about the additional term $\dual{f(P_b+\eps)-f(P_b)}{\Lambda\eps}$, and we need introduce a Morawetz type functional  such as 
$$
\frac 12\, \Im\int_{\R} \partial_y \left(\frac{|y|^2}{2}\right) \partial_y \eps \bar\eps\, dy
$$ in $H(s,\varepsilon)$ to cancel the effect from $\dual{f(P_b+\eps)-f(P_b)}{\Lambda\eps}$.
In fact, we need use a localized function to replace $ |y|^2/2$ due to the lack of control on $\norm{y\eps}_2$. 

From now on, we choose $A>A_0$  for sufficiently large $A_0$. We define the localized Morawetz functional $J(\eps)$ by
\begin{equation*}
J(\eps)=\frac12\, \Im\int_{\R}\partial_y \phi_A (y)\cdot \partial_y \varepsilon\, \bar\varepsilon \, dy,
\end{equation*}
where the localized function $\phi_A$ is defined in Section \ref{sect:notation}, and  denote the Energy-Morawetz functional $S(s, \eps)$  by 
\begin{equation*}
S(s,\eps)=\frac{1}{\lambda^{4}(s)}\Big(H(s,\eps(s))+b(s)\cdot J(\eps(s))\Big).
\end{equation*}

  First, we have 

\begin{proposition}\label{prop:S:coer}
	For any $s\in[s_*,s_1]$, then we have 
	\begin{equation*}
	S(s,\eps(s))\gtrsim \frac{1}{\lambda^{4}(s)}\left(
	\norm{\eps(s)}_{H^1}^2+O(s^{-2(K+1)}\right).
	\end{equation*}
\end{proposition}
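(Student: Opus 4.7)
The plan is to treat the term $b(s)\cdot J(\eps)$ as a small perturbation of the coercive functional $H(s,\eps)$, using the bootstrap bound on $b$ provided by \eqref{est:small est:rough}. After factoring out $\lambda^{-4}(s)$, the claim reduces to showing
\[
H(s,\eps(s)) + b(s)\,J(\eps(s)) \gtrsim \norm{\eps(s)}_{H^1}^2 + O\big(s^{-2(K+1)}\big)
\]
on $[s_*,s_1]$.

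First, I would invoke the coercivity estimate of Lemma \ref{lem:H coer}, which (in view of the orthogonality relations \eqref{eq:orth struct}, the almost-orthogonality \eqref{est:Pb orth}, and the derived bound \eqref{est:Q orth}) yields a constant $c_0>0$ such that
\[
H(s,\eps) \geq c_0\,\norm{\eps}_{H^1}^2 - C s^{-2(K+1)}.
\]

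Next, I would control the Morawetz correction $J(\eps)$. By the construction of $\phi_A$ in Section \ref{sect:notation}, the function $\phi'$ is bounded on $\R$ (it equals $r$ for $r<1$ and $3 - e^{-r}$ for $r>2$, with smooth interpolation), so $\norm{\partial_y \phi_A}_\infty = \norm{\phi'}_\infty \lesssim 1$ (in fact $\partial_y\phi_A(y) = A\phi'(y/A)\cdot \tfrac{1}{A}\cdot A$—after scaling one checks $\|\partial_y\phi_A\|_\infty\lesssim A$). Using Cauchy--Schwarz,
\[
|J(\eps)| \leq \tfrac{1}{2}\norm{\partial_y \phi_A}_\infty \norm{\partial_y \eps}_2 \norm{\eps}_2 \lesssim A\,\norm{\eps}_{H^1}^2.
\]
Combining this with the bootstrap bound $|b(s)|\lesssim s^{-1}$ from \eqref{est:small est:rough} (together with the explicit value $b_{\rm app}(s)=2/s$), we obtain
\[
|b(s)\,J(\eps(s))| \leq C_A\, s^{-1} \norm{\eps(s)}_{H^1}^2.
\]

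Finally, since $A$ is a fixed (large) constant chosen at the outset, I would choose $s_0$ large enough (independently of $s_1$) so that $C_A\, s_0^{-1} \leq c_0/2$. Then for all $s\in[s_*,s_1]\subset[s_0,s_1]$,
\[
H(s,\eps) + b\,J(\eps) \geq c_0\norm{\eps}_{H^1}^2 - C s^{-2(K+1)} - \tfrac{c_0}{2}\norm{\eps}_{H^1}^2 \geq \tfrac{c_0}{2}\norm{\eps}_{H^1}^2 - C s^{-2(K+1)},
\]
and multiplying by $\lambda^{-4}(s)$ yields the desired lower bound on $S(s,\eps)$.

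There is no genuine obstacle here: the coercivity of $H$ has already been established in Lemma \ref{lem:H coer}, and the Morawetz correction is a harmless perturbation because $|b|\ll 1$ in the regime under consideration. The only mild subtlety is to verify the ordering of constants, namely that $s_0$ may be chosen large enough in terms of $A$ (which was fixed beforehand in Lemma \ref{lem:locH:coer}) without circularity; this is ensured by the hypothesis of Proposition \ref{prop:unif est} that $s_0$ may be taken arbitrarily large independently of $s_1$.
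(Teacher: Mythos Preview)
Your proposal is correct and follows essentially the same approach as the paper: bound $|b\,J(\eps)|$ by $|b|\,\|\partial_y\phi_A\|_\infty\|\eps\|_{H^1}^2 \lesssim s^{-1}\|\eps\|_{H^1}^2$ using \eqref{est:small est:rough}, then absorb this into the coercivity from Lemma \ref{lem:H coer}. The only cosmetic issue is the slightly garbled parenthetical computation of $\partial_y\phi_A$; the correct one-line calculation is $\partial_y\phi_A(y)=A\,\phi'(y/A)$, hence $\|\partial_y\phi_A\|_\infty\lesssim A$, which is exactly what you use.
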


\begin{proof}
By \eqref{est:small est:rough}, we have
	\begin{equation*}
	\big|b\cdot J(\eps)\big|\leq |b|\norm{\partial_y \phi_A}_{\infty}\norm{\eps}^2_{H^1}\lesssim O(s^{-1}\norm{\eps}_{H^1}^2),
	\end{equation*}
which together with Lemma \ref{lem:H coer} implies the result.
\end{proof}

Before we compute the time derivative of functional $S(s,\varepsilon)$, we deal with the time derivative of functional $J(\varepsilon)$.

\begin{lemma}\label{lem:dsJ}
	For all $s\in[s_*,s_1]$,  then we have
	\begin{multline*}
	\frac{d}{d s}[J(\eps(s))]=\int_{\R}\phantom{}   \partial^2_y\phi_A\cdot | \partial_y \eps|^2 \, dy
	-\frac14\int_{\R}\partial^4_y \phi_A |\eps|^2 \, dy\\
	-\dual{f(P_b+\eps)-f(P_b)}{\frac12\partial^2_y\phi_A\eps+\partial_y\phi_A\partial_y \eps}
	+O(s^{-(2K+2)})+O(s^{-1}\norm{\eps}_{H^1}^2).
	\end{multline*}
\end{lemma}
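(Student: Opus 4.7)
The plan is to differentiate $J$ under the integral sign, integrate by parts once in $y$ to move the $s$-derivative onto a single factor, substitute from the equation \eqref{eq:eps v2} for $\eps_s$, and bound each of the resulting pairings. Starting from $J(\eps)=\tfrac12\Im\int \partial_y\phi_A\,\partial_y\eps\,\bar\eps\,dy$, differentiating in $s$, integrating by parts once on the $\partial_y\eps_s\,\bar\eps$ piece, and using $\Im(z\bar w)=-\Im(\bar z w)$ to combine the two $\eps_s$ terms, yields the clean identity
\begin{equation*}
\frac{d}{ds}J(\eps) \;=\; (i\eps_s,\,h)_2, \qquad h \;:=\; \tfrac12\,\partial^2_y\phi_A\,\eps \;+\; \partial_y\phi_A\,\partial_y\eps.
\end{equation*}
Substituting from \eqref{eq:eps v2}, the right-hand side decomposes into pairings of $h$ with each term of $i\eps_s = D_\eps H - \ModOp(s)P_b + i\frac{\lambda_s}{\lambda}\Lambda\eps - (1-\gamma_s)\eps - e^{-ib|y|^2/4}\Psi_K$.

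The principal contributions come from $D_\eps H = -\partial^2_y\eps + \eps - (f(P_b+\eps)-f(P_b)) - \lambda\mu\delta\eps$. Two careful integrations by parts on $(-\partial^2_y\eps,h)_2$ produce the two main Morawetz terms $\int \partial^2_y\phi_A|\partial_y\eps|^2\,dy - \tfrac14\int \partial^4_y\phi_A|\eps|^2\,dy$. The pairings $(\eps,h)_2$ and $-(1-\gamma_s)(\eps,h)_2$ both vanish because $\tfrac12\int \partial^2_y\phi_A|\eps|^2$ cancels $\int \partial_y\phi_A\cdot\tfrac12\partial_y|\eps|^2$ after one integration by parts. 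The nonlinear piece reproduces exactly $-\dual{f(P_b+\eps)-f(P_b)}{h}$ by definition. For the Dirac piece, $-\lambda\mu(\delta\eps,h)_2 = -\lambda\mu\,\Re(\eps(0)\overline{h(0)})$; since $\phi_A$ is even we have $\partial_y\phi_A(0)=0$, hence $h(0) = \tfrac12\eps(0)$, and the one-dimensional Sobolev embedding $|\eps(0)|^2\lesssim\|\eps\|_{H^1}^2$ gives an $O(\lambda\|\eps\|_{H^1}^2)=O(s^{-2}\|\eps\|_{H^1}^2)$ contribution.

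The remaining pieces form the error. From Lemma \ref{lem:modul est}, $|\Mod(s)|\lesssim s^{-(K+2)}$, and $\|h\|_2\lesssim_A \|\eps\|_{H^1}$, so $|(-\ModOp(s)P_b,h)_2|\lesssim s^{-(K+2)}\|\eps\|_{H^1}\lesssim s^{-(2K+2)}$ using the bootstrap bound $\|\eps\|_{H^1}\lesssim s^{-K}$. The $\Psi_K$ pairing is similarly absorbed via \eqref{est:error} into an even smaller $O(s^{-(2K+4)})$ term.

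The most delicate term is $\tfrac{\lambda_s}{\lambda}(i\Lambda\eps,h)_2$, since $\|\Lambda\eps\|_2$ is not directly controlled by $\|\eps\|_{H^1}$ because of the factor of $y$. To handle it I would expand the integrand term by term: the parts proportional to $|\eps|^2$ and $|\partial_y\eps|^2$ are pure imaginary and drop out, and the surviving real parts collapse to $\int\bigl(\tfrac12\partial_y\phi_A - \tfrac12 y\partial^2_y\phi_A\bigr)\Im(\bar\eps\,\partial_y\eps)\,dy$. The defining properties of $\phi$ make both $\partial_y\phi_A$ and $y\partial^2_y\phi_A$ uniformly bounded on $\R$ by a constant depending on $A$, so this pairing is $\lesssim \|\eps\|_{H^1}^2$; combined with $|\lambda_s/\lambda|\lesssim s^{-1}$ from \eqref{est:small est:rough} and Lemma \ref{lem:modul est}, this produces the $O(s^{-1}\|\eps\|_{H^1}^2)$ error. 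Assembling everything yields the stated identity. The principal technical points are the careful handling of the $\Lambda\eps$ pairing and the treatment of the $\delta$ at the origin; everything else is bookkeeping.
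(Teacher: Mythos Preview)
Your proof is correct and follows essentially the same approach as the paper. The only cosmetic difference is that you substitute from \eqref{eq:eps v2} (so the $\Lambda\eps$ term carries the coefficient $\lambda_s/\lambda$), whereas the paper substitutes directly from \eqref{eq:epsilon} (so the $\Lambda\eps$ term carries the coefficient $b$); since $\lambda_s/\lambda=-b+O(s^{-(K+2)})$ both routes give the same $O(s^{-1}\|\eps\|_{H^1}^2)$ error, and your explicit computation of $(i\Lambda\eps,h)_2$ in terms of $\partial_y\phi_A$ and $y\partial^2_y\phi_A$ actually supplies the detail the paper leaves implicit.
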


Note that the term $\dual{f(P_b+\eps)-f(P_b)}{\frac12\partial^2_y\phi_A\eps+\partial_y\phi_A\partial_y \eps}$ in the above equality is the localized version of $ \dual{f(P_b+\eps)-f(P_b)}{ \Lambda \eps}$ in the right hand side of \eqref{eq:H:ds}.

\begin{proof}
	By integration by parts, we have
	\begin{equation*}
	\frac{d}{d s}[J(\eps(s))]=\Re\int_{\R}i\eps_s \left(\frac12\partial^2_y\phi_A\bar\eps+\partial_y\phi_A\partial_y \bar\eps\right)\, dy.
	\end{equation*}
	
By \eqref{est:small est:rough}, we estimate the term from the first line of \eqref{eq:epsilon} as follows. 
	\begin{gather*}
	\Re\int_{\R}-\partial^2_y\eps\left(\frac12\partial^2_y \phi_A\bar\eps+\partial_y\phi_A\partial_y \bar\eps\right)dy=\int_{\R^d}\phantom{}    \partial^2_y\phi_A|\partial_y \eps|^2\,  dy-\frac14\int_{\R}\partial^4_y \phi_A|\eps|^2 \, dy,
	\\
	\Re\int_{\R}\eps\left(\frac12\partial^2_y \phi_A\bar\eps+\partial_y\phi_A\partial_y \bar\eps\right)dy=0,
	\\
	b\cdot \Re\int_{\R}i\Lambda\eps\left(\frac12\partial^2_y\phi_A\bar\eps+\partial_y \phi_A\partial_y \bar\eps\right)dy=O \left(|b| \norm{\eps}^2_{H^1}\right) = O \left(s^{-1}\norm{\eps}^2_{H^1}\right),
	\end{gather*}
and 
\begin{equation*}
	\lambda \cdot \Re\int_{\R}\Big(g(P_b+\eps)-g(P_b)\Big) \left(\frac12\partial^2_y\phi_A\bar\eps+\partial_y\phi_A\partial_y \bar\eps\right)dy=O(\lambda \norm{\eps}_{H^1}^2)=O(s^{-(2K+2)}).
	\end{equation*}
	
For  the term from the second line of \eqref{eq:epsilon}, we obtain from \eqref{est:small est:rough} and Lemma \ref{lem:modul est} that 
	\begin{multline*}
	\left|\dual{i\left(\frac{\lambda_s}{\lambda}+b\right)\Lambda (P_b+\eps)
		-(1-\gamma_s)(P_b+\eps)
		-(b_s+b^2-\theta)\frac{|y|^2}{4}P_b}
	{\frac12\partial^2_y\phi_A \eps+\partial_y\phi_A\partial_y \eps}\right|\\
	\lesssim |\Mod(s)| \norm{\eps}_{H^1}\lesssim O(s^{-(2K+2)}).
	\end{multline*}
	
	Finally, by  \eqref{est:error} and Lemma~\ref{lem:modul est}, we have
	\begin{multline*}
	\left| \dual{\Psi_K e^{-i \frac{b |y|^2}{4}}}{\frac12\partial^2_y\phi_A\bar\eps+\partial_y\phi_A\partial_y \bar\eps}\right| \\
	\lesssim   \Big(\lambda \big|{\rm Mod(s)}\big| + (b^2 + \lambda)^{K+2}\Big)\norm{\eps}_{H^1}
	\\
	\leq O(s^{-(K+4)}\norm{\eps}_{H^1})
	\leq O(s^{-(2K+4)}).
	\end{multline*}

Together with the above estimates, we can obtain the result.
\end{proof}

\begin{proposition}\label{prop:dsS}
	For any $s\in[s_*,s_1]$, we have
	\begin{equation*}
	\frac{d}{d s}\Big( S(s,\eps(s))\Big) \gtrsim 
	\frac{b}{\lambda^{4}(s)}\left(\norm{\eps(s)}_{H^1}^2+O\big(s^{-2(K+1)}\big)\right).
	\end{equation*}
\end{proposition}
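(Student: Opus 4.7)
\emph{Proof plan.}
The plan is to apply the product rule to $S = \lambda^{-4}(H + bJ)$ and then combine the contributions from Lemmas \ref{lem:dsH} and \ref{lem:dsJ} with the modulation bound from Lemma \ref{lem:modul est} in order to reveal a localized coercive quadratic form in $\varepsilon$. Explicitly,
$$
\frac{d}{ds}S = -\frac{4\lambda_s}{\lambda^5}(H+bJ) + \frac{1}{\lambda^4}\bigl(H_s + b_s J + b\dot J\bigr).
$$
Using $\lambda_s/\lambda = -b + O(s^{-(K+2)})$ and $b_s = O(s^{-2})$, together with Proposition \ref{prop:S:coer}, the first term equals $\frac{4b}{\lambda^4}(H+bJ)$ up to harmless errors and already provides a positive leading piece of order $\frac{b}{\lambda^4}\|\varepsilon\|_{H^1}^2$. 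The thrust of the proof is that the second term, far from destroying this, reinforces it after careful rearrangement.

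Substituting Lemmas \ref{lem:dsH}–\ref{lem:dsJ} and the identity $\lambda_s/\lambda = -b + O(s^{-(K+2)})$ into $H_s + b\dot J$, a key cancellation occurs between the nonlinear terms: the contribution $b\dual{f(P_b+\varepsilon)-f(P_b)}{\Lambda\varepsilon}$ from $H_s$ combines with $-b\dual{f(P_b+\varepsilon)-f(P_b)}{\tfrac{1}{2}\partial_y^2\phi_A\varepsilon + \partial_y\phi_A\partial_y\varepsilon}$ from $b\dot J$, leaving only $b\dual{f(P_b+\varepsilon)-f(P_b)}{\Lambda\varepsilon - \tfrac{1}{2}\partial_y^2\phi_A\varepsilon - \partial_y\phi_A\partial_y\varepsilon}$, and the bracketed test function vanishes identically on $\{|y|\leq A\}$ because $\phi_A(y)=|y|^2/2$ there. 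On $\{|y|>A\}$, $P_b$ decays exponentially and the purely $\varepsilon$-part is controlled by $\|\varepsilon\|_{H^1}^6\lesssim s^{-4K}\|\varepsilon\|_{H^1}^2$ by the bootstrap \eqref{est:small est:rough}; this residual is therefore negligible.

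What remains after this cancellation is quadratic in $\varepsilon$, and the full sum can be reassembled as
\begin{multline*}
4b(H+bJ) + H_s + b_s J + b\dot J = 4b\!\left[\tfrac12\!\int\partial_y^2\phi_A|\partial_y\varepsilon|^2\,dy + \tfrac12\|\varepsilon\|_2^2 - \!\int\!\bigl(F(P_b+\varepsilon)-F(P_b)-dF(P_b)\varepsilon\bigr)dy\right] \\
{}+ b\!\int(1-\partial_y^2\phi_A)|\partial_y\varepsilon|^2\,dy - \tfrac{b}{4}\!\int\partial_y^4\phi_A|\varepsilon|^2\,dy + \text{errors}.
\end{multline*}
The bracketed term is precisely the localized energy of Lemma \ref{lem:locH:coer}, so it is $\gtrsim\|\varepsilon\|_{H^1}^2 + O(s^{-2(K+1)})$; the next term is nonnegative since $\partial_y^2\phi_A\leq 1$ and $b>0$; and $\bigl|\tfrac{b}{4}\!\int\partial_y^4\phi_A|\varepsilon|^2\bigr| \lesssim bA^{-2}\|\varepsilon\|_2^2$ from the explicit form of $\phi$. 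Dividing by $\lambda^4$ and choosing $A$ large enough (independently of $s_0,s_1$) so that the $A^{-2}$ and remote-region losses are dominated by the factor $4$ multiplying the coercivity constant, one obtains the claimed bound.

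The main obstacle is the bookkeeping for the above decomposition and, more importantly, ensuring that the bad-signed remnants (the $A^{-2}$ loss and the nonlinear residual on $\{|y|>A\}$) are strictly absorbed by the leading positive term coming from $-4\lambda_s/\lambda^5$; this forces the choice of the Morawetz parameter $A$ sufficiently large once and for all, exploiting the localized coercivity of Lemma \ref{lem:locH:coer} rather than merely the global coercivity of Lemma \ref{lem:H coer}.
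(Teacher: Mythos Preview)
Your overall strategy matches the paper's: differentiate $S=\lambda^{-4}(H+bJ)$, combine Lemmas~\ref{lem:dsH} and~\ref{lem:dsJ}, exploit the cancellation between the two nonlinear pairings, and reassemble a coercive quadratic form. The algebraic identity you write down,
\[
4bH - b\|\partial_y\varepsilon\|_2^2 + b\!\int\partial_y^2\phi_A|\partial_y\varepsilon|^2
\;=\; 4b\,L + b\!\int(1-\partial_y^2\phi_A)|\partial_y\varepsilon|^2
\]
(with $L$ the bracketed localized form), is correct. The problem is in how you extract positivity from the right-hand side.

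First, Lemma~\ref{lem:locH:coer} only yields $\gtrsim\|\varepsilon\|_2^2+O(s^{-2(K+1)})$, not $\|\varepsilon\|_{H^1}^2$; you have misquoted it. Since the statement of Proposition~\ref{prop:dsS} requires $\|\varepsilon\|_{H^1}^2$ in the lower bound (and this is what is actually used in the bootstrap of Proposition~\ref{prop:bootstrap}), your argument as written does not close. Second, the claim $\partial_y^2\phi_A\le 1$ fails for the cut-off specified in Section~\ref{sect:notation}: since $\phi'(1)=1$ while $\phi'(2^+)=3-e^{-2}$, the mean value theorem forces $\phi''(\xi)>1$ for some $\xi\in(1,2)$, so $b\!\int(1-\partial_y^2\phi_A)|\partial_y\varepsilon|^2$ is not nonnegative and cannot simply be discarded.

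The paper sidesteps both issues by regrouping the \emph{same} quantity as $2bH+2bL$ (plus negligible $G$-terms) rather than $4bL+b\!\int(1-\partial_y^2\phi_A)|\partial_y\varepsilon|^2$. Then Lemma~\ref{lem:H coer} applied to $2bH$ supplies the full $H^1$ coercivity, while Lemma~\ref{lem:locH:coer} applied to $2bL$ merely guarantees nonnegativity (up to $O(s^{-2(K+1)})$); no sign information on $1-\partial_y^2\phi_A$ is needed. A minor further point: your claim that the ``purely $\varepsilon$-part'' on $\{|y|>A\}$ is $O(\|\varepsilon\|_{H^1}^6)$ is true but not immediate, since the test function contains the unbounded weight $(y-\partial_y\phi_A)\partial_y\varepsilon$; it requires the integration-by-parts identities the paper writes out explicitly (throwing all $y$-weights onto $\partial_y\bar P_b$) rather than a crude pointwise bound.
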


\begin{proof}
	By the definition of $S$, we have
	\begin{equation*}
	\frac{d}{d s}\Big(S(s,\eps(s))\Big)=\frac{1}{\lambda^{4}}\left( -4\frac{\lambda_s}{\lambda} \big( H(s,\eps)+b\cdot J(\eps) \big) + 
	\frac{d}{d s} \big(H(s,\eps(s))\big) 
	+b\cdot \frac{d}{d s}\big(J(\eps(s))\big)+ b_s\cdot J(\eps) \right). 
	\end{equation*}
	
	First, we claim the following estimate
	\begin{equation}\label{est:dsHJ}
	\frac{d}{d s} \big( H(s,\eps(s))\big) 
	+b\cdot \frac{d}{d s}\big(J(\eps(s))\big)=-b\norm{\partial_y\eps}_2^2+ 
	b\int_{\R} \partial^2_y\phi_A|\partial_y\eps|^2 \, dy+
O\left(	\frac{b}{A}\norm{\eps}_{H^1}^2\right)+  O(s^{-(2K+3)}) .
	\end{equation}
	
	 Indeed, by integration by parts, we have
	\begin{multline*}
	-\Re\int_{\R} \Big(f(P_b+\eps)-f(P_b)\Big) \cdot  \Lambda \bar\eps \, dy\\
	=
	-\frac 12\Re\int_{\R} \Big(f(P_b+\eps)-f(P_b)\Big) \bar\eps\, dy
	-\Re\int_{\R} y\partial_y \Big(F(P_b+\eps)-F(P_b)-dF(P_b)\eps\Big)\,
	dy\\+
	\Re\int_{\R} \Big(f(P_b+\eps)-f(P_b)-df(P_b)\eps\Big)y\partial_y\bar P_b\, dy
	\\
	=
	-\frac 12\Re\int_{\R} \Big(f(P_b+\eps)-f(P_b)\Big) \bar\eps \, dy
	+\Re\int_{\R}\Big(F(P_b+\eps)-F(P_b)-dF(P_b)\eps\Big)\,
	dy\\+
	\Re\int_{\R} \Big(f(P_b+\eps)-f(P_b)-df(P_b)\eps\Big)y\partial_y \bar P_b\, dy,
	\end{multline*}
	and
	\begin{multline*}
	-\Re\int_{\R} \Big(f(P_b+\eps)-f(P_b)\Big)\cdot  \left(\frac12\partial^2_y\phi_A\bar\eps+\partial_y \phi_A\partial_y \bar\eps\right)\, dy\\
	=
	-\frac12\Re\int_{\R} \Big(f(P_b+\eps)-f(P_b)\Big) \partial^2_y \phi_A\bar\eps \, dy
	+\Re\int_{\R} \partial^2_y\phi_A \Big(F(P_b+\eps)-F(P_b)-dF(P_b)\eps\Big)\, 
	dy\\+
	\Re\int_{\R} \Big(f(P_b+\eps)-f(P_b)-df(P_b)\eps\Big)
	\partial_y\phi_A\partial_y \bar P_b\, dy.
	\end{multline*}

Therefore, by combining Lemma \ref{lem:dsH} and Lemma \ref{lem:dsJ}, we have 
\begin{align*}
  \frac{d}{d s} \Big( H(s,\eps(s))\Big)
+ & b\cdot \frac{d}{d s}\Big(J(\eps(s))\Big)  \\
=& -
b\norm{\partial_y\eps}_2^2+ b\int_{\R}   \partial^2_y\phi_A|\partial_y\eps|^2 \, dy
\\
& +
\left(\frac{\lambda_s}{\lambda}+b\right) \cdot \Bigg(\norm{\partial_y \eps}_2^2
-\frac 12\, \Re\int_{\R} \Big(f(P_b+\eps)-f(P_b)\Big) \bar\eps \, dy\\
& \qquad \qquad \qquad 
+ \Re\int_{\R} \Big(F(P_b+\eps)-F(P_b)-dF(P_b)\eps\Big)\, dy
\\
& \qquad \qquad  \qquad  +\Re\int_{\R} \Big(f(P_b+\eps)-f(P_b)-df(P_b)\eps\Big)y\partial_y \bar P_b\,  dy\Bigg)
\\
&\qquad  +b\cdot \Bigg(-\frac12\, \Re\int_{\R} \Big(f(P_b+\eps)-f(P_b)\Big) \left(\partial^2_y \phi_A-1\right)\bar\eps \, dy\\
& \qquad  \qquad \quad +
\Re\int_{\R} \Big(F(P_b+\eps)-F(P_b)-dF(P_b)\eps\Big)\left(\partial^2_y \phi_A-1\right)\, dy\\
&\qquad  \qquad \quad  +\Re\int_{\R} \Big(f(P_b+\eps)-f(P_b)-df(P_b)\eps\Big)\left(\partial_y \phi_A-y\right)\partial_y \bar P_b\, dy\Bigg)
\\
& \qquad \qquad \qquad \qquad -\frac14\,  b \int_{\R}|\eps|^2\partial^4_y\phi_A\, dy +O(s^{-(2K+3)})+O(s^{-2}\norm{\eps}_{H^1}^2).
\end{align*}
By Lemma \ref{lem:modul est}, we have $$\left|\frac{\lambda_s}{\lambda}+b\right|\lesssim \left|{\rm Mod(s)}\right|\lesssim O(s^{-(K+2)}),$$
which together with \eqref{est:small est:rough} implies that  
\begin{align*}
\left|\left(\frac{\lambda_s}{\lambda}+b\right) \Bigg(\norm{\partial_y \eps}_2^2
-\frac 12\Re\int_{\R} \Big(f(P_b+\eps)-f(P_b)\Big) \bar\eps\, dy
+ \Re\int_{\R} \Big(F(P_b+\eps)-F(P_b)-dF(P_b)\eps\Big)\, dy\right.
\\ \left.
+\Re\int_{\R} \Big(f(P_b+\eps)-f(P_b)-df(P_b)\eps\Big)y\partial_y \bar P_b\, dy\Bigg)\right|\lesssim
s^{-(K+2)} \|\eps\|_{H^1}^{2}\lesssim s^{-(3K+2)}.
\end{align*}
Next, by \eqref{est:small est:rough} and the exponential decay of $P$, we have
\begin{multline*}
|b|  \left|-\frac12\Re\int_{\R} \Big(f(P_b+\eps)-f(P_b)\Big) \left(\partial^2_y\phi_A-1\right)\bar\eps\, dy\right|\\
\lesssim |b| \int_{\R}\Big(| P|^{4}|\partial^2_y\phi_A-1| |\eps|^2+|\eps|^{6} \Big)\, dy
\lesssim |b| \cdot e^{-\frac A2} \norm{\eps}_2^2+O\left(s^{-1}\norm{\eps}_{H^1}^{6}\right),
\end{multline*}
and similarly for 
$$b\cdot \Re\int_{\R} \Big(F(P_b+\eps)-F(P_b)-dF(P_b)\eps\Big)\left(\partial^2_y\phi_A-1\right)\,  dy,  $$ 
and 
$$b \cdot \Re\int_{\R} \Big(f(P_b+\eps)-f(P_b)-df(P_b)\eps\Big)\left(\partial_y \phi_A-y\right)\partial_y \bar P_b\, dy.$$
In addition, by the definition of $\phi_A$, we have
\begin{equation*}
\left|-b \int_{\R}|\eps|^2\partial^4_y\phi_Ady\right|\lesssim \frac{|b|}{A^2}\norm{\eps}_2^2.
\end{equation*}
Therefore, we can obtain \eqref{est:dsHJ}.

By Lemma \ref{lem:modul est}, we have $-\frac{\lambda_s}{\lambda} = b + O(s^{-(K+2)})$, thus we have 
\begin{align*}
&\quad -4\frac{\lambda_s}{\lambda} H(s,\eps) + \frac{d}{d s} \Big(H(s,\eps(s))\Big)
+b\cdot \frac{d}{d s}\Big(J(\eps(s))\Big)\\
&\gtrsim
4 b H(s,\eps) -b\norm{\partial_y \eps}_2^2 +b\int_{\R} \partial^2_y\phi_A|\partial_y\eps|^2 dy+O(s^{-2}\|\eps\|_{H^1}^2)+
\frac{b}{A}O(\norm{\eps}_{H^1}^2)+  O(s^{-(2K+3)})
\\&\gtrsim
 b\left( \int_{\R}\phantom{}  \partial^2_y\phi_A |\partial_y \eps|^2 \, dy+\norm{\eps}_2^2-2\int_{\R} \Big(F(P_b+\eps)-F(P_b)-dF(P_b)\eps\Big)\, dy\right)
\\
&\qquad +2 bH(s,\eps)+
O\left(\frac{b}{A}\norm{\eps}_{H^1}^2\right)+O(s^{-(2K+3)}).
\end{align*}
Thus, by choosing $A$ large enough, we can obtain from Lemma \ref{lem:H coer} and Lemma \ref{lem:locH:coer} that
\begin{align*}
-4\frac{\lambda_s}{\lambda} H(s,\eps) + \frac{d}{d s} \Big(H(s,\eps(s))\Big)
+b\cdot \frac{d}{d s}\Big(J(\eps(s))\Big)
\gtrsim b \|\eps\|_{H^1}^2 +O(s^{-(2K+3)}).
\end{align*}
Finally, by the facts that $b=O(s^{-1})$, $b_s=O(s^{-2})$ and $J(\eps)=O(\norm{\eps}_{H^1}^2)$, we have
$$\left( \left|\frac {\lambda_s}{\lambda}\right| b +|b_s|\right) |J(\eps)|\lesssim 
 s^{-2} \|\eps\|_{H^1}^2, $$
therefore, we have
\begin{equation*}
\frac{d}{d s}\Big(S(s,\eps(s))\Big) \gtrsim
\frac{b}{\lambda^{4}}\left(\norm{\eps}_{H^1}^2+O(s^{-(2K+2)})\right).
\end{equation*}
This completes the proof.
\end{proof}

\subsection{End of the proof of Proposition \ref{prop:unif est}}\label{sect:boot pf}

In this subsection, we finish the proof of Proposition \ref{prop:unif est}.
Recall from Subsection \ref{sect:boot base} that our goal is to prove $s_*=s_0$ by improving  \eqref{est:small est:rough} into \eqref{est:small est:s}. Therefore, it suffices to prove the following result.

 \begin{proposition}\label{prop:bootstrap}  
Let $K\geq 7$, then for all $s\in[s_*,s_1]$ , we have
\begin{align}
& \norm{\eps(s)}_{H^1}\lesssim s^{-(K+1)},\label{est:small eps}\\
&\left|\frac{\lambda^{\frac 12}(s)}{\lambda_{\rm app}^{\frac 12}(s)}-1\right|+
\left|\frac{b(s)}{b_{\rm app}(s)}-1\right|  
\lesssim s^{-1}.\label{est:small mod pars}
\end{align}
\end{proposition}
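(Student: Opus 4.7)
The plan is to establish the two improved estimates in sequence: first the remainder bound \eqref{est:small eps} via the Energy-Morawetz monotonicity of $S(s,\eps)$ set up in Subsection \ref{sect:Mono form}, and then the modulation parameter refinement \eqref{est:small mod pars} via energy conservation combined with an ODE comparison against the approximate flow from Lemma \ref{lem:sol:ode app}. The bootstrap hypothesis \eqref{est:small est:rough} keeps $b(s)>0$ on $[s_*,s_1]$ (since $b$ stays close to $b_{\rm app}(s)=2/s$), so the sign condition needed in Proposition \ref{prop:dsS} is available.

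For the remainder estimate, the choice of final data \eqref{eq:final data} forces $\eps(s_1)\equiv 0$ and hence $S(s_1,0)=0$. Integrating the inequality of Proposition \ref{prop:dsS} backward from $s_1$ yields
\[
S(s,\eps(s)) \lesssim \int_s^{s_1}\frac{b(\tau)}{\lambda^4(\tau)}\tau^{-2(K+1)}\,d\tau \lesssim \int_s^{s_1}\tau^{5-2K}\,d\tau \lesssim s^{6-2K},
\]
where we used $b\sim \tau^{-1}$, $\lambda\sim \tau^{-2}$ and $K\geq 7$ for integrability. The coercivity in Proposition \ref{prop:S:coer} then gives
\[
\norm{\eps(s)}_{H^1}^2 \lesssim \lambda^4(s)\,S(s,\eps(s)) + s^{-2(K+1)} \lesssim s^{-2-2K} + s^{-2(K+1)} \lesssim s^{-2(K+1)},
\]
which is \eqref{est:small eps} and strictly improves the $s^{-K}$ bound in \eqref{est:small est:rough}.

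For the parameter estimate, the key observation is that mass--energy conservation pins $\mathcal E(b(s),\lambda(s))$ near $C_0=8E_0/\norm{yQ}_2^2$. At $s=s_1$, $\eps\equiv 0$ gives $E(u(s_1))=E(P_{b_1,\lambda_1,\gamma_1})$, which by Lemma \ref{lem:initial conds} and \eqref{est:Pblamb energy} equals $E_0$ up to $O(s_1^{-2K})$; by \eqref{con:energy} this persists for all $s\in[s_*,s_1]$. Expanding $E(u)$ around the rescaling of $P_{b,\lambda,\gamma}$ and using the orthogonality conditions in \eqref{eq:orth struct}, Corollary \ref{cor:Q orth}, and Step 1 to absorb the linear-in-$\eps$ contributions, one obtains $|E(P_{b(s),\lambda(s),\gamma(s)})-E_0|$ small; inverting \eqref{est:Pblamb energy} then forces $|\mathcal E(b(s),\lambda(s))-C_0|$ small. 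Combined with $\lambda_s/\lambda+b=O(s^{-(K+2)})$ from Lemma \ref{lem:modul est} and the leading-order form of $\mathcal E$ in \eqref{def:Eblamb}, this produces the approximate ODE $-\lambda_s/\lambda^{3/2}=\sqrt{2\beta+C_0\lambda}+O(\cdots)$, which is precisely the defining relation for $\mathcal F$ in \eqref{def:F}. Backward integration from $\mathcal F(\lambda_1)=s_1$ gives $\mathcal F(\lambda(s))=s+O(\cdots)$, and \eqref{est:F} then yields $|\lambda^{1/2}(s)/\lambda_{\rm app}^{1/2}(s)-1|\lesssim s^{-1}$; the analogous bound on $b$ follows by solving $\mathcal E(b,\lambda(s))\approx C_0$ via the implicit function theorem as in Lemma \ref{lem:initial conds}.

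The main obstacle is the passage from $|E(u)-E_0|$ small to $|\mathcal E(b,\lambda)-C_0|$ small: one must carefully expand the nonlinear energy of $u=\lambda^{-1/2}(P_b+\eps)e^{i\gamma}$, identify the linear-in-$\eps$ terms, and absorb each of them using the orthogonality of $\eps$ to $\{P_b,i\Lambda P_b,|y|^2P_b,i\rho_b\}$ together with Lemma \ref{lem:modul est}, while higher-order terms are handled by Step 1. The hypothesis $K\geq 7$ is chosen precisely to ensure the backward integral in Step 1 converges with a margin of $s^{-2}$ over the bootstrap threshold, which simultaneously produces the strict improvement required to close the parameter bootstrap in Step 2.
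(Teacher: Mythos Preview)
Your proposal is correct, and both steps work, though each takes a somewhat different route from the paper.

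For \eqref{est:small eps}, you integrate the monotonicity inequality of Proposition~\ref{prop:dsS} directly backward from $s_1$ (where $S$ vanishes) and combine with the coercivity of Proposition~\ref{prop:S:coer}. The paper instead runs a nested bootstrap-by-contradiction: it introduces thresholds $s_\dagger<s_\ddagger$ at which $\|\eps\|_{H^1}$ crosses fixed multiples of $s^{-(K+1)}$, observes that on $[s_\dagger,s_\ddagger]$ one has $\|\eps\|_{H^1}^2\geq \kappa^2 s^{-2(K+1)}$ so that $S$ is nondecreasing there, and compares $S(s_\dagger)\leq S(s_\ddagger)$ via the two-sided bound $\kappa^{-1}\lambda^{-4}(\|\eps\|_{H^1}^2-\kappa^2 s^{-2(K+1)})\leq S\leq \kappa\lambda^{-4}\|\eps\|_{H^1}^2$. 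Your direct integration is shorter; the paper's argument only uses the \emph{sign} of $dS/ds$ on the bad set rather than an integral bound, which in other problems can be an advantage, but here both give the same conclusion.

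For \eqref{est:small mod pars}, the paper never expands $E(u)$ in $\eps$. Instead it tracks $E(P_{b,\lambda,\gamma})$ directly by integrating the already-established bound \eqref{est:Pblamb denergy} (combined with Lemma~\ref{lem:modul est}) from $s_1$ to $s$, obtaining $|E(P_{b,\lambda,\gamma}(s))-E_0|\lesssim s^{-K+3}$, and then invokes \eqref{est:Pblamb energy}. This completely sidesteps the energy expansion you flag as ``the main obstacle'': no linear-in-$\eps$ term ever needs to be isolated. Your route also works---after writing $\tilde E'(\lambda,P_b)=-P_b+(\text{RHS of }\eqref{eq:Pb})$, the pairing with $\eps$ is handled by $(\eps,P_b)=O(s^{-(K+2)})$ from Lemma~\ref{lem:modul est}, the three orthogonality relations \eqref{eq:orth struct}, and the $\Psi_K$ bound---but the paper's path is cleaner because the relevant computation was already packaged into Proposition~\ref{prop:profile}(iii). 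From $|\mathcal E(b,\lambda)-C_0|$ small onward the two arguments coincide (the paper reads off $b$ from \eqref{est:b} rather than via IFT, but this is cosmetic). One small correction: the restriction $K\geq 7$ is actually invoked in Step~2 (to make $s^{-K+3}\leq s^{-4}$), not for convergence of the Step~1 integral, which only requires $K\geq 4$.
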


\begin{proof}
First, we prove \eqref{est:small eps}.
From  Proposition \ref{prop:S:coer},  there exists a constant $\kappa >1$ such that for any $s\in[s_*,s_1]$, we have
\begin{equation}\label{eq:bootstrap-1}
\frac{1}{\kappa}\frac 1{\lambda^{4}}\left(\norm{\eps}_{H^1}^2-\kappa^2 s^{-2(K+1)}\right)\leq  S(s,\eps) \leq \frac\kappa{\lambda^{4}} \norm{\eps}_{H^1}^2.
\end{equation}
By Proposition \ref{prop:dsS}, taking a larger $\kappa$ if necessary, we have
\begin{equation}\label{eq:bootstrap-2}
\frac{d}{d s}[S(s,\eps(s))]\geq \frac{1}{\kappa} \frac{b}{\lambda^{4}}\left(\norm{\eps}_{H^1}^2-\kappa^2s^{-2(K+1)}\right).
\end{equation}

Define 
\begin{equation*}
s_\dagger:=\inf\{ s\in[s_*,s_1],\quad \norm{\eps(\tau)}_{H^1}\leq   2\kappa^2\tau^{-(K+1)}\quad\text{for all }\tau\in[s,s_1] \}.
\end{equation*}
Since $\eps(s_1) \equiv 0$,  $s_\dagger$ is well-defined and  $s_\dagger<s_1$ by the continuity of the solution of \eqref{eq:dnls} in $H^1(\R)$.  We argue by contradiction.  Assume  that $s_\dagger>s_*$. In particular, we have $$\norm{\eps(s_\dagger)}_{H^1}=  2 \kappa^2 s_\dagger^{-(K+1)}.$$ 

Define 
\begin{equation*}
s_\ddagger:=\sup\{ s\in[s_\dagger,s_1],\quad \norm{\eps(\tau)}_{H^1}\geq  \kappa\tau^{-(K+1)}\quad\text{for all }\tau\in[ s_\dagger,s] \}.
\end{equation*}
In particular, we have 
$s_\dagger<s_\ddagger<s_1$ and $$\norm{\eps(s_\ddagger)}_{H^1}=\kappa s_\ddagger^{-(K+1)}.$$ From \eqref{eq:bootstrap-2}, $S$ is nondecreasing on $[s_\dagger,s_\ddagger]$.
By \eqref{est:small est:rough},   \eqref{eq:bootstrap-1} and \eqref{eq:bootstrap-2},  we have for $K\geq 7$ that
\begin{multline*}
\norm{\eps(s_\dagger)}_{H^1}^2-\kappa^2 s_\dagger^{-2(K+1)}
 \leq 
\kappa\lambda^{4}(s_\dagger)S(s_\dagger,\eps(s_\dagger))
 \leq
\kappa\lambda^{4}(s_\dagger)S(s_\ddagger,\eps(s_\ddagger))\\
\leq \kappa^2 \frac {\lambda^{4}(s_\dagger)}{\lambda^{4}(s_\ddagger)}\|\eps(s_\ddagger)\|_{H^1}^2
\leq
\kappa^4 \frac {\lambda^{4}(s_\dagger)}{\lambda^{4}(s_\ddagger)} s_\ddagger^{-2(K+1)}
\leq
2\kappa^4 \left(\frac{s_\ddagger}{s_\dagger}\right)^{8}s_\ddagger^{-2(K+1)}
\leq
2 \kappa^4  s_\dagger^{-2(K+1)},
\end{multline*} 
Therefore, we obtain $\norm{\eps(s_\dagger)}_{H^1}^2\leq 3 \kappa^4 s_\dagger^{-2(K+1)}$,
which is a contradiction with the definition of $s_\dagger$. Hence we obtain $s_\dagger=s_*$ and \eqref{est:small eps} is proved.

Now, we prove \eqref{est:small mod pars}.  Recall that
$\lambda(s_1)=\lambda_1$ and  $b(s_1)=b_1$ are chosen in Lemma \ref{lem:initial conds} so that
$\mathcal F(\lambda(s_1)) = s_1$ and $\displaystyle \mathcal{E}(b(s_1),\lambda(s_1))=\frac{8E_0}{\norm{yQ}^2_{L^2} }.$
In particular, we deduce from \eqref{est:Pblamb energy} that

\begin{equation*}
|E(P_{b_1,\lambda_1,\gamma_1})-E_0| \lesssim \frac{(b_1^2+\lambda_1)^{K+2}}{\lambda_1^2} \lesssim s_1^{-2K}.
\end{equation*}
By \eqref{est:Pblamb denergy}, \eqref{est:small est:rough}, and \eqref{est:modul}, we have for all $s\in[s_*,s_1]$ that
$$
\left|\frac{d}{ds}E(P_{b,\lambda,\gamma})\right| \lesssim \frac{1}{\lambda^2} \Big(\big|{\rm Mod(s)} \big|+ \big(b^2+ \lambda \big)^{K+2}\Big)
\lesssim s^{-K+2}.
$$ 
In particular, by integration over $[s, s_1]$, we obtain that
\begin{align}\label{est:Eblambda}
|E(P_{b,\lambda,\gamma}(s))-E_0|\lesssim  |E(P_{b,\lambda,\gamma}(s))-E(P_{b_1,\lambda_1,\gamma_1})| +|E(P_{b_1,\lambda_1,\gamma_1})- E_0| 
\lesssim s^{-K+3}.
\end{align}
By \eqref{est:Pblamb energy} and \eqref{est:small est:rough}, we have  for $K\geq 7$ that
\begin{equation*}
\left|\mathcal{E}(b(s),\lambda(s))-\frac{8E_0}{\displaystyle \int_{\R} |y|^2Q^2 \, dy }\right|\lesssim s^{-K+3}+ O\left( \frac{(b^2+\lambda)^{K+2}}{\lambda^2} \right)\lesssim s^{-4}.
\end{equation*}
By the expression \eqref{def:Eblamb} of $\mathcal E$ with $\displaystyle C_0=\frac{8E_0}{\int_{\R} |y|^2Q^2\, dy}$, we have
$$
\left|b^2-2\beta \lambda -C_0\lambda^2\right|\lesssim \lambda(b^2+\lambda)+ \frac{\lambda^2}{s^4}  \lesssim \frac{\lambda}{s^2},
$$
where the term $\lambda(b^2+\lambda)$ comes from the higher order terms in the definition of $\mathcal{E}(b(s),\lambda(s))$ in \eqref{def:Eblamb}. 
Since $b\approx \lambda^{\frac 12}$ by \eqref{est:small est:rough},  we have
\begin{equation}
\label{est:b}
\left|b-\sqrt{2\beta \lambda +C_0\lambda^2}\right|\lesssim \frac{\lambda^{\frac12}}{s^2},
\end{equation} which together with \eqref{def:F} and $\left|\frac{\lambda_s}{\lambda}+b\right|\lesssim s^{-(K+2)}$ implies that
\begin{equation}\label{est:dsF}
 \left| \frac{d}{d s} \Big(\mathcal F\big(\lambda(s)\big) \Big)- 1 \right|=\left|\frac{\lambda_s}{\lambda \cdot  \sqrt{2\beta \lambda +C_0\lambda^2}}+1\right|
\lesssim s^{-2}.
\end{equation}
By integrating the above estimate over $[s,s_1]$, we obtain
$$
\left|\mathcal F(\lambda(s_1))- \mathcal F(\lambda(s)) - (s_1-s)\right|\lesssim s^{-1},
$$
and thus, by the choice $\mathcal F(\lambda(s_1)) = s_1$ in Lemma \ref{lem:initial conds}, we obtain
$$\mathcal F(\lambda(s)) = s + O( s^{-1}).$$
Therefore, by \eqref{est:F} and the definition of $\lambda_{\rm app}(s)$ in \eqref{sol:ode app}, we have
\begin{equation*}
\lambda(s)^{-\frac12} + O(1) = \lambda_{\rm app}(s)^{-\frac12} + O(s^{-1})
\Longrightarrow
\left|\frac{\lambda^{\frac 1 2}(s)}{\lambda_{\rm app}^{\frac 12}(s)}-1\right| \lesssim  
s^{-1}.
\end{equation*}
 We  insert this estimate into \eqref{est:b}  to obtain 
 \begin{multline*}
 b(s)=\sqrt{2\beta \lambda(s) + C_0 \lambda(s)^2} + O(s^{-3})\\
 = \sqrt{2\beta \lambda_{\rm app}(s)  + 2\beta \big(\lambda(s)-\lambda_{\rm app}(s)\big)+ C_0 \lambda(s)^2} + O(s^{-3}) \\
 =  \sqrt{2\beta \lambda_{\rm app}(s) }  +\frac{  2\beta \big(\lambda(s)-\lambda_{\rm app}(s)\big)+ C_0 \lambda(s)^2  }{  (2\beta \lambda_{\rm app}(s))^{\frac12} }+ O(s^{-3} ), 
 \end{multline*}
 which together with the definition of $b_{\rm app}$ in Lemma \ref{lem:sol:ode app} implies that 
$$ b(s) = b_{\rm app}(s) + O\left(s^{-1}b_{\rm app}(s)\right). $$

Lastly,  we can obtain \eqref{est:Pblamb energy2} from \eqref{est:Eblambda}, and completes the proof.
\end{proof}

\section{Proof of Theorem \ref{thm:thresh2}}\label{Sect4: Exist}

This section is devoted to the proof of Theorem \ref{thm:thresh2} by a standard compactness argument. 

As shown in Remark \ref{rem:tapp}, we obtain the relation $t_{\rm app}(s)$ corresponding to the approximate scaling parameter $\lambda_{\rm app}$ in Lemma \ref{lem:sol:ode app}.  We now rewrite the estimates of Proposition \ref{prop:unif est} in the  time variable $t$ corresponding to the scaling parameter $\lambda$ in Proposition \ref{prop:unif est}.
 
\begin{proposition}\label{prop:unif est t}Let $t_1<0$ be close to $0$,  then there exists $t_0<0$ independent of $t_1$ such that under the assumptions of  Proposition \ref{prop:unif est}, for all $t\in [t_0,t_1]$, we have
	\begin{align}
	&b(t)= C_b |t|^{\frac 13}(1+o(1))
	, \ \ \lambda (t)=C_\lambda|t|^{\frac 23}(1+o(1)), \quad \text{as}\quad t\to 0^-,  \label{est:blamb t}\\
	&\norm{\eps(t)}_{H^1}\lesssim |t|^{\frac{K+1}{3}}, \label{est:eps t}\\
	& \label{est:Pblamb t}
\lim_{t\to 0^-}	E(P_{b,\lambda,\gamma}(t)) = E_0.
	\end{align}
\end{proposition}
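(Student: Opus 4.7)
The plan is to translate the rescaled-time estimates of Proposition \ref{prop:unif est} into physical-time estimates, using $dt/ds = \lambda^2(s)$ to relate the two time variables. First I would observe that since $\lambda(s)>0$ on $[s_0,s_1]$, the map $s\mapsto t(s)$ defined in \eqref{def:st} is a strictly increasing $\mathcal C^1$-bijection from $[s_0,s_1]$ onto its image $[t_0,t_1]$, where $t_0:=t(s_0)$.

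The central step is to compare $t(s)$ with the approximate time $t_{\rm app}(s)=-C_s s^{-3}$ of Remark \ref{rem:tapp}. By \eqref{est:small est:s}, $\lambda(s)=\lambda_{\rm app}(s)(1+O(s^{-1}))$, hence $\lambda^2(s)=(4/\beta^2)s^{-4}(1+O(s^{-1}))$. Integrating from $s$ to $s_1$ and using the normalization $t(s_1)=t_1=t_{\rm app}(s_1)$ fixed in the paragraph preceding \eqref{eq:final data},
\begin{equation*}
t(s)=t_1-\int_s^{s_1}\lambda^2(\sigma)\,d\sigma
= t_{\rm app}(s)+O\!\left(\int_s^{s_1}\sigma^{-5}\,d\sigma\right)
=t_{\rm app}(s)+O(s^{-4}).
\end{equation*}
Since $|t_{\rm app}(s)|=C_s s^{-3}$, this rewrites as $t(s)=t_{\rm app}(s)(1+O(s^{-1}))$, and inverting yields $s=C_s^{1/3}|t|^{-1/3}(1+o(1))$ as $t\to 0^-$. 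The same identity evaluated at $s=s_0$ shows that $t_0=-C_s s_0^{-3}+O(s_0^{-4})$ is independent of $s_1$ (hence of $t_1$), provided $s_0$ is chosen large enough as in Proposition \ref{prop:unif est}.

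Plugging this back into the scaling laws of Remark \ref{rem:tapp}, namely $\lambda_{\rm app}(s)=C_\lambda|t_{\rm app}(s)|^{2/3}$ and $b_{\rm app}(s)=C_b|t_{\rm app}(s)|^{1/3}$, and combining with \eqref{est:small est:s}, I obtain
\begin{align*}
\lambda(t)&=\lambda_{\rm app}(s)(1+O(s^{-1}))=C_\lambda|t|^{2/3}(1+o(1)),\\
b(t)&=b_{\rm app}(s)(1+O(s^{-1}))=C_b|t|^{1/3}(1+o(1)),
\end{align*}
which is \eqref{est:blamb t}. For \eqref{est:eps t}, $\|\varepsilon(t)\|_{H^1}\lesssim s^{-(K+1)}\lesssim |t|^{(K+1)/3}$ follows directly from \eqref{est:small est:s} and the inversion $s\approx C_s^{1/3}|t|^{-1/3}$. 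Finally, \eqref{est:Pblamb t} is immediate from \eqref{est:Pblamb energy2} and $K\geq 7$: $|E(P_{b,\lambda,\gamma}(t))-E_0|\lesssim s^{-K+3}\to 0$ as $t\to 0^-$.

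The proof is essentially routine once Proposition \ref{prop:unif est} is in hand; the only point that needs care is the bookkeeping of the multiplicative error $(1+O(s^{-1}))$ when transferring the asymptotics from $|t_{\rm app}(s)|$ to $|t|$, since \eqref{est:blamb t} must reproduce the exact constants $C_\lambda$ and $C_b$. This is guaranteed by the additive bound $O(s^{-4})=o(|t_{\rm app}(s)|)$ obtained above, which is precisely what the $s^{-1}$ gain in \eqref{est:small est:s} provides.
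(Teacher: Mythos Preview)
Your proof is correct and follows essentially the same approach as the paper's own argument: both integrate $dt/ds=\lambda^2(s)$, use \eqref{est:small est:s} to replace $\lambda^2$ by $\lambda_{\rm app}^2(1+O(s^{-1}))$, and conclude $t(s)=t_{\rm app}(s)(1+o(1))$ before reading off the estimates. Your version is in fact slightly more quantitative (you keep track of the $O(s^{-4})$ additive error rather than the paper's $o(1)$), and your remark that the recovery of the exact constants $C_\lambda,C_b$ hinges on the $s^{-1}$ gain in \eqref{est:small est:s} is a nice observation the paper leaves implicit.
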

\begin{proof} It suffices to show the relation $t(s) $ corresponding to $\lambda$ by Proposition \ref{prop:unif est}. By \eqref{def:st} and \eqref{est:small est:s},  we have  for large $s<s_1$ that
	$$
	t_1-t(s)=\int_s^{s_1}\lambda^2(\sigma)d\sigma=\int_s^{s_1}\lambda^2_{\rm app}(\sigma)\left[1+O(\sigma^{-1}) \right]d\sigma.$$
	Recall that $t_{\rm app}$ given by \eqref{def:tapp} corresponds to the normalization 
	$$t_{\rm app}(s)=-\int_{s}^{+\infty}\lambda^2_{\rm app}(\sigma)\, d\sigma,\quad t_{\rm app}(s_1)=t_1,$$
	from which we obtain
	$$t(s)=t_{\rm app}(s)(1+o(1))=-C_s s^{-3}\left[1+o(1)\right].$$ The estimates now follow directly follow from \eqref{sol:ode app} in Lemma \ref{lem:sol:ode app} and Proposition \ref{prop:unif est}  (see the definition of $C_\lambda$ and $C_b$ in \eqref{def:lapp} and \eqref{def:bapp}).
\end{proof}

Now, we can finish the proof of Theorem \ref{thm:thresh2} assuming Proposition \ref{prop:unif est} and Proposition \ref{prop:unif est t}.

\begin{proof}[Proof of Theorem \ref{thm:thresh2}]
	Let $(t_n)\subset(t_0,0)$ be an increasing sequence  such that 
	$$\lim_{n\to \infty}t_n=0.$$
	 For each $n$, let $u_n$  be the solution of \eqref{eq:dnls} on $[t_0,t_n]$ with final data at $t_n$ 
	\begin{equation}\label{eq:tn data}
	u_n(t_n,x)=\frac{1}{\lambda^{\frac{1}{2}}(t_n)} P_{b(t_n)}\left(\frac{x}{\lambda(t_n)}\right),
	\end{equation}
	where $\lambda(t_n)=\lambda_1$ and  $b(t_n)=b_1$ are given by  Lemma \ref{lem:initial conds}
	for $s_1=|C_s^{-1}t_n|^{-\frac{1}{3}}$, 
	so that $u_n(t)$ satisfies the conclusions of Proposition \ref{prop:unif est} and of Proposition \ref{prop:unif est t} on the interval $[t_0,t_n]$.
	The minimal mass blow up solution for \eqref{eq:dnls} is now obtained as the limit of a subsequence of $\{u_n\}_{n\geq 1}$. 
	
	In a first step, we prove that a subsequence  of $(u_n(t_0))$ converges to a suitable initial data. Indeed, from Proposition \ref{prop:unif est t}, we infer that $(u_n(t_0))$ is bounded in $H^1(\R)$.  Hence there exists a subsequence of $(u_n(t_0))$ (still denoted by $(u_n(t_0))$ and 
	$u_\infty(t_0)\in H^1(\R)$ such that 
	\[
	u_n(t_0)\rightharpoonup  u_\infty(t_0)\quad\text{weakly in }H^1(\R^d)\text{ as }n\to+\infty.
	\]
	Now, we obtain   strong  convergence in $H^s$ (for some $0<s<1$) by direct arguments.  Let $\chi:[0,+\infty)\to[0,1]$ be a smooth cut-off function such that $\chi\equiv 0$ on  $[0,1]$ and $\chi\equiv 1$ on $[2,+\infty)$. 
	For $R>0$, define $\chi_R:\R\to[0,1]$ by $\chi_R(x)=\chi(|x|/R)$. 
	Take any $\delta>0$. By the expression of $u_n(t_n)$ in \eqref{eq:tn data}, we can choose $R$ large enough (independent of $n$) so that  
	\begin{equation}\label{n-delta}
	\int_{\R} |u_n(t_n)|^2\chi_R dx\leq \delta.
	\end{equation}
	It follows from elementary computations that  
	\[
	\frac{d}{d t}\int_{\R} |u_n|^2\chi_R \, dx=2\, \Im\int_{\R}\nabla\chi_R\cdot \nabla u_n \, \bar u_n \, dx.
	\]
	Hence from the geometrical decomposition 
	\begin{equation*}
	u_n(t,x)=\frac{1}{\lambda_n^{\frac{1}{2}}(t)}\left(P_{b_n(t)}+\eps_n)(t,y)\right)e^{i\gamma_n(t)},\qquad 
	y=\frac{x}{\lambda_n(t)},
	\end{equation*}
	and the smallness \eqref{est:blamb t} and \eqref{est:eps t} of $\eps_n$  and $\lambda_n$ we infer
	\begin{equation*}
	\left|\frac{d}{d t}\int_{\R} |u_n(t)|^2\chi_R dx\right|\leq  \frac{C}{\lambda_n(t) R}\left(e^{-\frac{R}{2\lambda_n(t)}}+\norm{\eps_n(t)}^2_{H^1}\right)\leq \frac{C}{R}|t|^{-\frac 23}|t|^{\left(K+1\right)\frac 13}.
	\end{equation*}
	Integrating between $t_0$ and $t_n$, we obtain 
	\begin{equation*}
	\int_{\R} |u_n(t_0)|^2\chi_R dx\leq \frac{C}{R}|t_0|^{\left(-2+K+1\right)\frac 13+1}+\int_{\R}|u_n(t_n)|^2\chi_R dx.
	\end{equation*}
	Combined with \eqref{n-delta}, for a possibly larger $R$, this implies
	\begin{equation*}
	\int_{\R} |u_n(t_0)|^2\chi_R dx\leq 2\delta.
	\end{equation*}
	We conclude from the local compactness of Sobolev embedding that for $0\leq s<1$:
	\begin{equation*}
	u_n(t_0)\to  u_\infty(t_0)\quad\text{strongly in }H^s(\R),\ \text{ as }n\to+\infty.
	\end{equation*}
	Let $u_\infty(t)$ be the solution of \eqref{eq:dnls} with $u_\infty(t_0)$ as initial data at $t=t_0$. From \cite[Theorem 4.12.3]{Ca03:book}, there exists $0<s_0<1$ such that the Cauchy problem for \eqref{eq:dnls} is locally well-posed  in $H^{s_0}(\R)$. This implies that $u_\infty$ exists on $[t_0,0)$ and for any $t\in[t_0,0)$,  
	\begin{equation*}
	u_n(t)\to  u_\infty(t)\quad\text{strongly in }H^{s_0}(\R),\,\text{ weakly in }H^1(\R),\,\text{ as }n\to+\infty.
	\end{equation*}
	Moreover,
	since $\displaystyle \lim_{n\to \infty} \int_{\R} u_n^2(t_n) \, dx= \int_{\R} Q^2\, dx$, we have $\displaystyle \int_{\R} u_\infty^2 \, dx=\int_{\R} Q^2\, dx$. By weak convergence in $H^1(\R)$ and the estimates from Proposition \ref{prop:unif est t} applied to $u_n$, $u_\infty(t)$ satisfies \eqref{est:small tube}, and denoting $(\eps_\infty,\lambda_\infty,b_\infty,\gamma_\infty)$ its decomposition, we have by standard arguments (see e.g. \cite{MeR05:NLS:blp}), for any $t\in[t_0,0)$,
	$$
	\lambda_n(t)\to \lambda_\infty(t), \quad
	b_n(t)\to b_\infty(t), \quad
	\gamma_n(t)\to \gamma_\infty(t),
	\quad \eps_n(t)\rightharpoonup \eps_\infty(t) \quad \hbox{$H^1(\R)$ weak, as $n\to \infty$.}
	$$ 
	The uniform estimates on $u_n$ from Proposition \ref{prop:unif est t} give on $[t_0,0)$ that 
	\begin{equation}
	\label{est:small pars}
	b_\infty(t)=C_b|t|^{\frac 13} \left(1+o(1)\right) ,\quad \lambda_\infty (t)=C_\lambda |t|^{\frac 23} \left(1+o(1)\right) , \ \ \|\varepsilon_{\infty}(t)\|_{H^1}\lesssim |t|^{\frac{K+1}{3}},
	\end{equation}
	and 
	\begin{equation}\label{est:small pars2}
	\frac{b_\infty(t)}{\lambda_\infty^2(t)}=\frac{C_b}{C_\lambda^2}\, |t|^{\frac 13- \frac 43}
	\Big(1+o_{t\uparrow 0}(1)\Big)
	=\frac{2}{3}\, \frac{1}{|t|}\Big(1+o_{t\uparrow 0}(1)\Big),
	\end{equation}
	which justifies the form \eqref{sol:blp form} and the blow up rate \eqref{blp:rate2}.
	Finally, we prove that $E(u_\infty)=E_0$.
	Let $t_0<t<0$. We have by \eqref{est:Pblamb t} and \eqref{est:Pblamb energy},
	$$
	\lim_{t\to 0^-}\mathcal{E}(b_n(t),\lambda_n(t)) = \frac{8E_0}{\displaystyle \int_{\R} |y|^2 Q^2\, dy}
	$$
	where the limit is independent of $n$,
	and thus
	$$
		\lim_{t\to 0^-} \mathcal{E}(b_\infty(t),\lambda_\infty(t)) = \frac{8E_0}{\displaystyle \int_{\R} |y|^2 Q^2\, dy}.
	$$
	Using \eqref{est:Pblamb energy}, we deduce
	$$
		\lim_{t\to 0^-} E(P_{b_\infty,\lambda_\infty,\gamma_\infty}(t)) = E_0, $$
	and thus, by \eqref{est:small pars},
	$$
		\lim_{t\to 0^-} E(u_\infty(t))= E_0.
	$$
	Thus, by the energy conservation, we obtain $E(u_\infty(t))=E_0$.
\end{proof}

\begin{appendix}

\section{Proof of Theorem \ref{thm:thresh1}}
\label{app:a:gwp}
By contradiction, assume that there exists a radial blow-up solution $u(t)$ of \eqref{eq:dnls} with $\mu<0$ and $\|u(t)\|_{2}=\|Q\|_{2}$, which together with sharp Gagliardo-Nirenberg inequality and the energy conservation implies that $E_0=E(u(t))\geq E_{\rm crit}(u(t))\geq 0$. Let a sequence $t_n\to T^*\in (0,+\infty]$ with $\|\nabla u(t_n)\|_{2}\to +\infty$ and consider the renormalized sequence 
$$
v_n(x)=\lambda(t_n)^{\frac 12}u(t_n,\lambda (t_n)x), \ \ 0<\lambda(t_n)=\frac{\|\nabla Q\|_{2}}{\|\nabla u(t_n)\|_{2}} \searrow 0.
$$
Then, by the mass conservation, we have  $$\|v_n\|_{2}=\|Q\|_{2},$$ and by  the energy conservation and $\mu<0$, we have
$$
E_0=E(u(t_n))\geq E_{\rm crit}(u(t_n))=\frac{E_{\rm crit}(v_n)}{\lambda^2(t_n)}.
$$
Therefore, the sequence $v_n$ satisfies: 
$$
\|v_n\|_{2}=\|Q\|_{2}, \ \ \|\nabla v_n\|_{2}=\|\nabla Q\|_{2}, \ \ \limsup_{n\to +\infty} E_{\rm crit}(v_n)\leq 0.
$$ 
From standard concentration compactness argument, see \cite{MeR05:NLS:blp, We83:NLS:SGN}, there holds, up to a subsequence, for some  $\gamma_n\in \R$, 
$$
v_n(\cdot)e^{i\gamma_n}\longrightarrow Q\ \ \mbox{in} \ \ H^1(\R), \quad \text{as} \quad n\rightarrow +\infty.
$$ 
In particular,  we have as $n\to +\infty$ that
$$ 
|u(t_n, 0)|=\lambda^{-\frac12}(t_n) |v(t_n, 0)| = \lambda^{-\frac12}(t_n) | Q(0)+o_n(1)| \rightarrow +\infty,
$$ 
which contradicts with the a priori bound from the energy conservation
$$
E_0=E(u(t))\geq E_{\rm crit}(u(t))-\frac{1}{
2} \mu |u(t,0)|^2\geq -\frac{1}{
2} \mu |u(t,0)|^2.
$$
We conclude the proof.

\section{Proof of Proposition \ref{prop:smallsoliton}}
\label{app:b}
For $M<\|Q\|_{2}$,  we define
 $$A_M=\{u\in H^1_{\rm rad}(\R)\ \ \mbox{with}\ \ \|u\|_{2}=M\}$$  and consider the minimization problem $$I_M=\inf_{u\in A_M}E(u).$$ 
 
First, we claim
\begin{equation}
\label{signim}
-\infty<I_M<0.
\end{equation}
Indeed, from \eqref{est:energy} and $M<\norm{Q}_2$, we have 
 $I_M>-\infty$  and that any minimizing sequence is bounded in $H^1_{\rm rad}(\R)$.
 Let $u\in A_M$ and $v_{\lambda}(x)=\lambda^{\frac 12}u(\lambda x),$ then $v_\lambda \in A_M$ and $$E(v_\lambda)=\lambda^2\left[E_{\rm crit}(u)-\frac 1 {2\lambda} \mu |u(0)|^2\right].$$ 
 In particular, for $0<\lambda \ll1 $ and $u\not \equiv0$, that $E(v_\lambda)<0$  follows from the symmetric decreasing rearrangement if necessary in \cite{LiebLoss:book}, and we obtain \eqref{signim}.

Second, let $u_\lambda=u(\lambda x)$, so that
\begin{equation*}
\norm{u_\lambda}_2=\lambda^{-\frac12}\norm{u}_2,
\end{equation*}
and  
\begin{equation*}
E(u_\lambda)= \frac 12\lambda \int_{\R}\nabla u|^2\, dx -\frac1{2} \mu |u(0)|^2-\frac{1}{6\lambda}\int_{\R} |u|^{6}\, dx.
\end{equation*}
Therefore, we have
\begin{equation*}
\frac{d}{d\lambda}{\|u_\lambda\|_{2}}\Big|_{\lambda=1}<0,
\end{equation*}
and
\begin{equation*}
\frac{d}{d\l}E(u_\lambda)\Big|_{\lambda=1}= \frac 12\int_{\R}|\nabla u|^2\, dx +\frac 16 \int_{\R} |u|^{6}\, dx, 
\end{equation*}
which implies that
\begin{equation}
\label{monotonie}
I(M)\text{ is decreasing in }M.
\end{equation}

To finish, let $(u_n)$ be a minimizing sequence. Up to a subsequence and from the standard radial nonincreasing compactness of Sobolev embedding (see \cite{BL83:ell PDE}), we have
$$u_n\rightharpoonup u\ \ \mbox{in}\ \ H^1(\R), \ \ u_n\to u\ \ \mbox{in}\ \ L^6, \quad  \text{and}\quad u_n(0) \to u(0)$$ 
as $n\to +\infty$. Hence $$ E(u)\leq I_M\quad \hbox{and} \quad \|u\|_{2}\leq M.$$ From \eqref{monotonie} and the definition of $I_M$, we deduce $\|u\|_{2}=M$ and $E(u)=I_M$. From a standard Lagrange multiplier argument, $u$ satisfies $$\partial^2_x u+|u|^{5}u+\mu \delta u=\omega u$$
for a constant $\omega\in \R$. The fact that $\omega> \mu^2/4$  follows from a standard  spectral property of the Schr\"odinger operator $-\partial^2_x + \mu \delta$ in \cite{AlGKH88:Phys:book} .
\end{appendix}


\def\cprime{$'$}

\end{document}